\documentclass[11pt]{amsart}
\usepackage[margin=1in]{geometry}
\usepackage{amsmath, amssymb, amsfonts, mathtools}
\usepackage{enumitem}
\usepackage{graphicx}
\usepackage{asymptote}
\usepackage{caption}
\usepackage{dsfont} 
\usepackage{comment}

\usepackage{thmtools}

\newtheorem{theorem}{Theorem}[section]
\newtheorem{proposition}[theorem]{Proposition}

\newtheorem{conjecture}[theorem]{Conjecture}

\newtheorem{lemma}[theorem]{Lemma}

\theoremstyle{definition}

\usepackage{xcolor}
\usepackage{hyperref}

\definecolor{LightBlue}{rgb}{0,0.8,1} 

\hypersetup{colorlinks=true, citecolor=LightBlue, linkcolor=blue,urlcolor=blue} 

\usepackage[noabbrev,capitalise,nameinlink]{cleveref}
\crefname{conjecture}{Conjecture}{Conjectures}

\newcommand{\ZZ}{\mathbb{Z}}
\newcommand{\RR}{\mathbb{R}}

\newcommand{\FS}{\mathrm{FS}}
\newcommand{\SI}{\mathrm{SI}}

\DeclareMathOperator{\lcm}{lcm}

\DeclareMathOperator{\covol}{covol}
\DeclareMathOperator{\vol}{vol}

\title{Sets with few subset sums}
\author{Ruben Carpenter}
\address{Yale University, 421 Temple St, New Haven, CT 06511}
\email{ruben.carpenter@yale.edu}
\author{Colin Defant}
\address{Harvard University, 1 Oxford St, Cambridge, MA 02139}
\email{colindefant@gmail.com} 
\author{Noah Kravitz}
\address{St John's College, Oxford and Mathematical Institute, University of Oxford; St Giles', Oxford OX1 3JP, UK}
\email{noah.kravitz@maths.ox.ac.uk}

\begin{document}

\begin{abstract}
It is a classical fact that every $n$-element set of positive reals has at least $\binom{n+1}{2}+1$ distinct subset sums, with equality exactly for homogeneous arithmetic progressions (when $n\geq 4$).  We establish stability versions of this inverse theorem in two regimes.  First, for any parameter $M \leq n-4$, we precisely characterize the $n$-element sets of positive reals with at most $\binom{n+1}{2}+1+M$ subset sums.  Second, for any constant $C$, we provide a characterization, sharp up to constants, of the $n$-element sets of positive reals with at most $Cn^2$ distinct subset sums.  Along the way, we constrain (for any fixed $d \geq 2$) the structure of $n$-element subsets of $\mathbb{R}^d$ with $o(n^{d+1})$ subset sums. 
\end{abstract}

\maketitle

\section{Introduction}

\subsection{Inverse and stability theorems}
A central theme in additive combinatorics is that sets with small doubling have constrained structure.  It is a basic fact that if $A \subseteq \mathbb{R}$ is an $n$-element set, then the sumset $A+A$ has size at least $2n-1$.  The ``inverse theorem'' for this inequality is the statement that equality holds if and only if $A$ is an arithmetic progression.  Freiman's $3k-4$ Theorem provides a ``stability'' version of this inverse theorem, namely, a precise sense in which a set with doubling near the minimum must closely resemble an arithmetic progression.  The Freiman--Ruzsa Theorem provides a rougher structural description of sets of bounded doubling, namely, that any $n$-element set $A \subseteq \mathbb{R}$ with $|A+A|=O(n)$ is contained in a generalized arithmetic progression  (GAP) of rank $O(1)$ and size $O(n)$; this characterization is tight up to the dependence of the implicit constants.  In the present paper we study analogous problems for sets with few distinct subset sums.

For a finite subset $A$ of an abelian group $G$, let
$$\FS(A):=\left\{\sum_{x \in B}x: B \subseteq A\right\}$$
denote the set of \emph{subset sums} of $A$. (The notation ``$\FS$'', standing for ``finite sums'', originated in arithmetic Ramsey theory.)  In the most natural setting $G=\mathbb{R}$, an easy inductive argument (see~\cite[Theorem 3]{N95}) gives that if $A \subseteq \mathbb{R}_{>0}$ is an $n$-element set, then
$$|\FS(A)| \geq \binom{n+1}{2}+1.$$
One can further show (see~\cite[Theorem 5]{N95}) that when $n\geq 4$, equality occurs if and only if $A$ is a \emph{homogeneous arithmetic progression}, namely, a set of the form
$$A=\{a,2a, \ldots, na\}$$
for some $a>0$.  The goal of this paper is to establish stability versions of this inverse theorem.  We study two regimes, one with a more ``exact'' flavor (like Freiman's $3k-4$ Theorem) and one with a more ``asymptotic'' flavor (like the Freiman--Ruzsa Theorem).

This line of inquiry is closely related to inverse Littlewood--Offord theory.  Littlewood--Offord theory concerns the anticoncentration properties of the random variable $S:=\sum_{x \in B}x$, where $B$ is a uniformly random subset of a fixed $n$-element (multi)set $A \subseteq \mathbb{R}_{>0}$.  Inverse Littlewood--Offord theory, as introduced by Tao and Vu~\cite{TV09}, aims to describe the structure of the (multi)sets $A$ with $\max_{y \in \RR}\mathbb{P}[S=y]$ large; a typical result (see \cite[Theorem 2.5 and Corollary 2.7]{TV09}) says that if $\max_{y \in \RR}\mathbb{P}[S=y] \geq n^{-O(1)}$, then almost all of $A$ is contained in a generalized arithmetic progression of rank $O(1)$ and size $n^{O(1)}$.

Conceptually speaking, our main results describe the structure of the set $A$ when the random variable $S$ has \emph{small support}, while the work of Tao and Vu describes the structure of $A$ when the random variable $S$ has \emph{some popular value}.  The former hypothesis is of a stronger form than the latter, and we obtain the accordingly stronger conclusion that $A$ must resemble a  generalized arithmetic progression of rank $1$, rather than merely fairly small rank; this ``rank-collapse'' phenomenon (see \cref{prop:proj'} below) is one of the main takeaways of the paper.

\subsection{Main results}
We first study the regime where $|\FS(A)|$ only slightly exceeds $\binom{n+1}{2}+1$, as proposed by Nathanson in 1995~\cite{N95}.  For $c\in \RR^\times$, let $c \cdot A$ denote the dilation of $A$ by $c$.

\begin{theorem}\label{thm:linear}
Let $n\geq 4$ and $0 \leq M \leq n-4$ be integers.  Let $A \subseteq \mathbb{R}_{>0}$ be an $n$-element set.  Then $$|\FS(A)| \leq \binom{n+1}{2}+1+M$$
if and only if there is some $a>0$ such that $A \subseteq a\cdot \mathbb{Z}_{>0}$ and
$$\sum_{x \in a^{-1} \cdot A} x \leq \binom{n+1}{2}+M.$$
\end{theorem}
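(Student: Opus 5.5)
The ``if'' direction is immediate. If $A \subseteq a\cdot\mathbb{Z}_{>0}$, then every element of $a^{-1}\cdot\FS(A)$ is an integer in $[0,\sum_{x\in a^{-1}\cdot A}x]$. Hence $|\FS(A)| \le 1+\sum_{x \in a^{-1}\cdot A} x \le \binom{n+1}{2}+1+M$.

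For the ``only if'' direction I would run the classical inductive argument while keeping track of the losses. Write $a_1<\dots<a_n$, $A_k=\{a_1,\dots,a_k\}$ and $S_k=a_1+\dots+a_k$. The set $\FS(A_k)\setminus\FS(A_{k-1})$ always contains the $k$ numbers $S_k$ and $S_k-a_i$ for $i<k$, all of which exceed $S_{k-1}=\max\FS(A_{k-1})$. Set $e_k:=|\FS(A_k)|-|\FS(A_{k-1})|-k\ge 0$. Then $\sum_k e_k = |\FS(A)|-\binom{n+1}{2}-1\le M\le n-4$, and the same bound holds for every prefix. The whole proof is to convert this small excess budget into arithmetic structure.

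\emph{Step 1 (base).} I would find a short initial segment that is a homogeneous progression. Since $\sum e_k\le n-4$, the budget is too small for much waste. For example, if $a_1,a_2$ are incommensurable, then $a_k+\FS(A_{k-1})$ and $\FS(A_{k-1})$ share few elements for every $k$, and one should get $e_k\gtrsim 1$ for essentially every $k\ge 3$, with more at early steps. Concretely, I would show that some prefix $A_{k_0}$ with $k_0$ bounded (say $k_0\le 4$) satisfies $e_j=0$ for $j\le k_0$, and then apply the equality case (\cite[Theorem 5]{N95}, or a direct small check) to conclude $A_{k_0}=\{a,2a,\dots,k_0a\}$. The precise handling of these small cases is where the constant $4$ in $M\le n-4$ should enter.

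\emph{Step 2 (propagation).} I would prove by induction on $k\ge k_0$ that $A_k\subseteq a\mathbb{Z}$ and that $a^{-1}\FS(A_k)=\{0,1,\dots,a^{-1}S_k\}$ is a full interval.
\begin{itemize}
\item Suppose this holds for $k-1$. If $a_k\notin a\mathbb{Z}$, then $a_k+\FS(A_{k-1})$ is disjoint from $\FS(A_{k-1})$. This forces $|\FS(A_k)|\ge 2|\FS(A_{k-1})|\ge 2\binom{k}{2}+2$, so $e_k\ge \binom{k}{2}-k+1$, which exceeds the budget once $k\ge k_0$. Hence $a_k=ma$ for some integer $m$.
\item If $m\le a^{-1}S_{k-1}+1$, the union of $\FS(A_{k-1})$ and its shift is again a full interval, and the induction continues.
\item If $m> a^{-1}S_{k-1}+1$, there is a gap, and again $|\FS(A_k)|=2|\FS(A_{k-1})|$, which is too expensive.
\end{itemize}
At the end $\FS(A)$ is the full progression $\{0,a,\dots,S_n\}$. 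Therefore $a^{-1}S_n+1=|\FS(A)|\le\binom{n+1}{2}+1+M$, which is the claim.

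The main obstacle is Step 1 together with the low-$k$ end of Step 2. There the crude ``doubling'' penalty $\binom{k}{2}-k+1$ is small, and one must use the global budget $M\le n-4$ rather than the local excess alone. If an early incommensurable element sneaks in cheaply, it continues to cost at least one new sum at \emph{every} later step, since each later $a_j$ cannot lie in both commensurability classes. This should give total excess at least about $n-3>M$. Making this ``persistent cost'' argument rigorous is what I would attempt first, and I expect it is exactly where the sharp threshold $n-4$ comes from.
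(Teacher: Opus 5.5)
Your bookkeeping (the excess $e_k$, the budget $\sum_k e_k\le M$, and the propagation in Step 2) matches the paper's, but there is a genuine gap: nothing in the proposal actually establishes the base case or controls the low-$k$ end of Step 2.

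\begin{itemize}
\item Step 1 only asserts that some prefix $A_{k_0}$ with $k_0\le 4$ has $e_1=\cdots=e_{k_0}=0$.
\item In Step 2, the claim that the doubling penalty $\binom{k}{2}-k+1$ ``exceeds the budget once $k\ge k_0$'' is false. For $k=5$ the penalty is $6$, while $M$ can be $n-4$. So you need a cost at \emph{every} later step.
\item Your commensurability heuristic cannot supply that cost. In $\{1,3,4,5,\ldots,n+1\}$ all elements are integers and $e_1=e_2=e_3=0$, yet $e_k=1$ for every $k\ge 4$. The persistent cost comes from the prefix failing to be a homogeneous progression, not from incommensurable classes.
\end{itemize}

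The missing ingredient is a one-step rigidity statement (\cref{lem:one-step-equality}). If $B\subseteq\mathbb{R}_{>0}$ has $m\ge 3$ elements and $x>\max B$, then $|\FS(B\cup\{x\})\setminus\FS(B)|=m+1$ forces $B=\{\tfrac{x}{m+1},\ldots,\tfrac{mx}{m+1}\}$. Its proof goes as follows.
\begin{itemize}
\item The $m+1$ progressions of difference $x$ covering $\FS(B)$ must start at $0,b_1,\ldots,b_m$.
\item Locating $b_1+b_i$ (downward in $i$) among the elements of these progressions forces $b_{i+1}=b_1+b_i$.
\item Locating $b_2+b_m$ then forces $b_2=2b_1$. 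This last step is where $m\ge 3$ is needed.
\end{itemize}

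With this lemma, both of your steps close.
\begin{itemize}
\item Pigeonholing $\sum_k e_k\le M$ gives an index $i_0\ge n-M\ge 4$ with $e_{i_0}=0$. The lemma then makes the whole prefix $A_{i_0}$ a homogeneous progression, which replaces your Step 1.
\item Alternatively, your $k_0=4$ version follows: if $A_4$ were not homogeneous, then $e_j\ge 1$ for all $4\le j\le n$, for a total of at least $n-3>M$.
\item If the propagation first breaks at index $i_1+1$, the lemma gives $e_j\ge 1$ for all $j\ge i_1+2$. Together with your doubling penalty $e_{i_1+1}\ge\binom{i_1+1}{2}-i_1$, this yields $\sum_k e_k\ge n-1+\tfrac{i_1(i_1-3)}{2}\ge n-1>M$.
\end{itemize}

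The threshold $n-4$ reflects the failure of the lemma at $m=2$, where $x=b_1+b_2$ already gives equality. This is exactly what the example $\{1,3,4,\ldots,n+1\}$ exploits.
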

The latter condition in \cref{thm:linear} implies that
$$\{1,2, \ldots, n-M\} \subseteq a^{-1} \cdot A \subseteq \{1,2, \ldots, n+M\},$$
which, when $M$ is small, indicates a precise sense in which $A$ is ``close'' to being a homogeneous arithmetic progression.  The constraint $M \leq n-4$ is optimal because of examples like
$$A=\{1,3,4,\ldots, n+1\},$$
which has $|\FS(A)|=\binom{n+1}{2}+1+(n-3)$ and $\sum_{x \in A} x=\binom{n+1}{2}+n-1$.  See \cite{lev} for older results of a similar flavor; also see \cite{lev2,GHHLLP} and the references therein for thematically related work on subsequence sums of zero-sum-free sequences in finite abelian groups.

Our second main result concerns the regime where $|\FS(A)|=O(n^2)$.
\begin{theorem} \label{thm: main inverse thm}
Let $C>0$, and let $n$ be a positive integer.  If $A \subseteq \mathbb{R}_{>0}$ is an $n$-element set with $|\FS(A)| \leq Cn^2$, then there is a decomposition $A=A_1 \sqcup A_2$, where: 
\begin{itemize}
    \item there is some $r \in \mathbb{R}_{>0}$ such that $A_1 \subseteq r\cdot\mathbb{Z}_{>0}$ and $\sum_{x \in r^{-1}\cdot A_1}x \ll_C n^2$;
    \item we have $|A_2|=O_C(1)$.
\end{itemize}
\end{theorem}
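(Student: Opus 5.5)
We will go from "few subset sums" to "small sumset of a large subset", apply Freiman-type structure, and then use a higher-dimensional projection argument to force the rank down to one.

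\textbf{Step 1: a large subset with small doubling.} Sort $A$ increasingly. For any $B\subseteq A$, every sum of at most $|B|$ elements of $B$ is a subset sum, so $|\FS(A)|\ge|\FS(B)|$. The key elementary inequality is that for disjoint $B,B'\subseteq A$ we have $|\FS(B\cup B')|\ge |\FS(B)|+|\FS(B')|-1$. This follows by ordering: adding the maximum of $\FS(B)$ to each element of $\FS(B')$ gives new sums above $\FS(B)$.

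Split $A$ into $k\approx n/m$ consecutive blocks of size $m$, where $m$ is a large constant depending on $C$. Iterating the inequality above along chains of sums, the bound $|\FS(A)|\le Cn^2$ forces most blocks to satisfy $|\FS(\text{block})|=O_C(m^2)$. Roughly, $\FS(A)$ contains a union of $k$ shifted copies arranged increasingly, with total size about $\sum_j |\FS(\text{block}_j)|$, and this sum must be $O(n^2)$. A cleaner route is to consider the $h$-fold restricted sumsets $h^\wedge A\subseteq \FS(A)$ for $h\approx n/2$ and apply a Plünnecke-type lower bound.

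What we actually want is a subset $A'\subseteq A$ with $|A'|\ge n-O_C(1)$ and $|A'+A'|=O_C(n)$. I would obtain it by noting that $\FS(A)\supseteq \FS(A\setminus\{x,y\})+\{0,x,y,x+y\}$, and more generally that $|\FS(A)|\ge |\FS(A\setminus S)+\FS(S)|$. Taking $S$ to be a random half of $A$ and applying the Plünnecke–Ruzsa inequality then gives small doubling.

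\textbf{Step 2: Freiman–Ruzsa and lifting.} By the Freiman–Ruzsa theorem, $A'$ lies in a proper GAP $P$ of rank $d=O_C(1)$ and size $O_C(n)$. Through the Freiman isomorphism, identify $A'$ with an $n$-element subset $\tilde A$ of a box in $\mathbb{Z}^d$ whose side lengths $L_1,\dots,L_d$ satisfy $\prod L_i=O(n)$. The map $\phi:\mathbb{Z}^d\to\mathbb{R}$ is linear, and its restriction to $\FS(\tilde A)$ may fail to be injective. This is the crux.

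\textbf{Step 3: rank collapse (the main obstacle).} Here I would invoke the paper's higher-dimensional result (\cref{prop:proj'}): an $n$-element set in $\mathbb{R}^d$ with $o(n^{d+1})$ subset sums must be concentrated on a lower-dimensional subspace. We argue by contradiction. If $\tilde A$ genuinely spans $d\ge2$ dimensions at scale $n$, meaning that $\Omega(n)$ elements are not on any bounded number of hyperplanes, then $\FS(\tilde A)$ contains a $d$-dimensional box of size about $\prod_i(nL_i)\approx n^{d}\cdot n$ lattice points. Since $A'\subseteq\mathbb{R}_{>0}$ and the sums are taken in $\mathbb{R}$, we must also control the collapse under $\phi$. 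The point is that $\phi$ is injective on $P$, and $\FS(\tilde A)$ lies in a dilate $nP$ on which $\phi$ has fibres of size $O(n^{d-1})$. So $|\FS(A')|\gg n^{d+1}/n^{d-1}$ times a factor that grows with the spread. This gives $\gg n^2\cdot\omega(1)$ unless all but $O_C(1)$ elements lie on a single line through the origin, that is, in $r\cdot\mathbb{Z}$ after rescaling. Making the fibre estimate rigorous is the hardest step. I would try to choose the GAP generators so that the positivity of $A$ yields a linear functional positive on $\tilde A$, which controls the fibres.

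\textbf{Step 4: conclusion.} Once $A_1\subseteq r\cdot\mathbb{Z}_{>0}$ has $n-O_C(1)$ elements, $\FS(A_1)$ contains all integers between roughly $\Theta(1)$ and $\sum r^{-1}A_1$ minus $O$ of the largest element. This follows from a standard interval-filling fact for sets of integers containing many small elements, and the smallest elements have bounded gcd structure after removing $O(1)$ outliers. Hence $\sum_{x\in r^{-1}A_1}x\ll |\FS(A)|\le Cn^2$. Moving the exceptional elements into $A_2$ finishes the proof.
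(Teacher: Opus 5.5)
Your outline has the right architecture (small doubling, Freiman--Ruzsa, lifting, \cref{thm:stability}, rank collapse, extension to all of $A$). However, the load-bearing steps either fail or are missing.

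\textbf{Step 3 (the decisive gap).} An upper bound $O(n^{d-1})$ on the fibres of $\phi$ over the dilated box only gives $|\FS(A')|\gg n^2$, which is consistent with the hypothesis. The ``$\omega(1)$ factor that grows with the spread'' has no source. Moreover, the dichotomy you aim for is false. Take $A'=[1,n]$, which lies in the proper rank-$2$ GAP $\{x+99y:\ |x|\le 49,\ |y|\le\lceil n/99\rceil\}$. No line through the origin contains more than $n/99+99$ points of the lift $\tilde A$, so $|\FS(\tilde A)|\gg n^3$ by \cref{thm:stability}; this is the result you paraphrase, whereas \cref{prop:proj'} is a different statement. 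Yet $|\FS(A')|\asymp n^2$. So what collapses is not $\tilde A$ but the generators, and the argument must run the other way:
\begin{enumerate}
\item First you need a cleaning step that passes to a lower-rank GAP whenever a hyperplane holds most of $\tilde A$. This is required before \cref{thm:stability} applies, and it is absent from your sketch.
\item Then $|\FS(\tilde A)|\gg n^{d+1}$ and $|\FS(A')|\le Cn^2$ together force some fibre of $\phi$ in the box $\prod_j[-ns_j,ns_j]$ to contain $\gg n^{d-1}$ lattice points.
\item A fibre of affine dimension at most $d-2$ has only $O(n^{d-1-1/d})$ points. So this fibre is $(d-1)$-dimensional, which forces $(t_1,\dots,t_d)=a\lambda$ with $\lambda\in\mathbb{Z}^d$ primitive.
\item Counting points of the lattice $\lambda^\perp\cap\mathbb{Z}^d$ (covolume $\|\lambda\|_2$) in the slice of the box bounds the fibre by $\ll n^{d+1}/\max_j(|\lambda_j|\,ns_j)$. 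Hence $|\lambda_j|s_j\ll n$ and $A'\subseteq a\cdot[1,O_C(n)]$.
\end{enumerate}
This is exactly \cref{prop:proj'} via \cref{lem:slice}. Positivity enters only at the very end, to replace $[-O(n),O(n)]$ by $[1,O(n)]$; it does not control fibres.

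\textbf{Steps 1 and 4.} The target of Step 1, a subset with $|A'|\ge n-O_C(1)$ and $|A'+A'|=O_C(n)$, is false. Let $m=\lfloor\sqrt n\rfloor$, $N=\lfloor n^{3/2}\rfloor$ and $A=[1,n-m]\cup\{N+jn:\ 1\le j\le m\}$. The elements sum to $O(n^2)$, so $|\FS(A)|=O(n^2)$. But if $A'$ omits only $k=O(1)$ elements, the sets $(N+jn)+(A'\cap[1,n-m])$ with $N+jn\in A'$ are pairwise disjoint, so $|A'+A'|\ge(m-k)(n-m-k)\gg n^{3/2}$. Your sketches do not give small doubling of elements anyway: the blocks only yield average $|\FS(\text{block})|=O(nm)$, and Pl\"unnecke--Ruzsa controls the doubling of $\FS(A\setminus S)$, not of $A\setminus S$.

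What is true, and suffices, is a subset of size $\gg n$. Let $A(i)$ be the $i$ smallest elements and pick $i\ge n/2$ with $z_i:=|\FS(A(i+1))\setminus\FS(A(i))|=O_C(n)$. Each progression of difference $a_{i+1}$ meets $[0,2a_i]$ at most twice, so $|A(i)+A(i)|\le 2z_i+i$.

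Consequently the rank-one piece $A''$ may be only a constant fraction of $A$, and Step 4 assumes precisely what remains to be shown. Interval filling neither identifies nor counts elements that are huge or incommensurable with $a$. The missing local-to-global idea is this. $S:=\FS(a^{-1}\cdot A'')$ is an $\Omega_C(1)$-dense subset of an interval of length $O_C(n^2)$. So $\FS(a^{-1}\cdot A)=\FS(a^{-1}\cdot(A\setminus A''))+S$ contains only $O_C(1)$ pairwise disjoint translates of $S$. Applying this to partial sums of elements exceeding that length, to elements in distinct residue classes modulo $1$, and to partial sums of elements of large order modulo $1$ gives $|A_2|=O_C(1)$, with the remaining elements summing to $O_C(n^2)$.
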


This characterization is tight up to constants.  Indeed, for any disjoint $A_1 \subseteq r\cdot \mathbb{Z}_{>0}$ and $A_2 \subseteq \mathbb{R}_{>0}$, we have
$$|\FS(A_1 \cup A_2)|=|\FS(A_1)+\FS(A_2)| \leq |\FS(A_1)| \cdot |\FS(A_2)| \leq \left(1+\sum_{x \in r^{-1} \cdot A_1}x\right) 2^{|A_2|};$$
this is at most $Cn^2$ whenever (for instance) $\sum_{x \in r^{-1} \cdot A_1}x \leq C^{1/2}n^2-1$ and $|A_2| \leq \log_2(C^{1/2})$.

In fact, we are able to obtain the optimal control on the size of $A_2$.  The implicit constant in the ``bulk'' bound $\sum_{x \in r^{-1} \cdot A_1}x \ll_C n^2$ is necessarily quite large due to our use of the Freiman--Ruzsa Theorem, but we can pin down the optimal value of the implicit constant in the ``junk'' set bound $|A_2|=O_C(1)$: Our \cref{thm:refined} below identifies, for each nonnegative integer $\ell$, the precise threshold for $|\FS(A)|$ which guarantees that one can take $|A_2| \leq \ell$ in the conclusion of \cref{thm: main inverse thm}.  This implies that in the setup of \cref{thm: main inverse thm} (with $n$ sufficiently large in terms of $C$), one can take $|A_2| \leq \lfloor 2C-1 \rfloor$, and this bound is optimal for all values of $C \geq 1/2$.

We also mention that \cref{thm: main inverse thm} can be extended to the setting $A \subseteq \mathbb{R}$, as follows.  Replacing an element of $A$ with its negative merely translates $\FS(A)$ and in particular leaves $|\FS(A)|$ unchanged.  Thus
$$A':=\{x \in \mathbb{R}_{>0}: \{x,-x\} \cap A \neq \emptyset\}$$
is a set of size at least $(n-1)/2$ with $|\FS(A')| \leq |\FS(A)| \ll C|A'|^2$, so \cref{thm: main inverse thm} applies.  Standard arguments with Freiman homomorphisms then allow one to extend the theorem to all torsion-free abelian groups.

More complicated behavior arises if one allows the size of $\FS(A)$ to grow even slightly faster than quadratically.  For example, if $A_1$ is a homogeneous arithmetic progression of length $n-m$ and $A_2$ is any $m$-element set disjoint from $A_1$, then
$$|\FS(A_1 \cup A_2)| \ll n^2 \cdot |\FS(A_2)|;$$
with $m$ fairly small relative to $n$, there is a great variety of choices of $A_2$ (e.g., subsets of higher-rank GAP's) that make $|\FS(A_1 \cup A_2)|$ only slightly superquadratic in $n$.  Once one allows $\FS(A)$ to grow cubically, there are also genuinely multidimensional examples coming from GAPs.

This discussion brings us to our last result, which concerns subsets of $\mathbb{R}^d$ with $d \geq 1$.  The following theorem, which we believe is of independent interest, figures in the proof of \cref{thm: main inverse thm} and was initially motivated by our work on the number of permutation-twisted dot products~\cite{CDK26}. 

\begin{theorem}\label{thm:stability}
For every positive integer $d$ and every real $0<\varepsilon<1$, there is a constant $\gamma=\gamma_d(\varepsilon) > 0$ such that the following holds.  Let $n$ be a positive integer, and let $A \subseteq \mathbb{R}^d$ be an $n$-element set.  If each proper subspace of $\mathbb{R}^d$ contains at most $(1-\varepsilon)n$ elements of $A$, then $|\FS(A)| \geq \gamma n^{d+1}$.
\end{theorem}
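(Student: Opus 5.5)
The plan is to induct on $d$, peeling off one dimension at a time through a hyperplane. The key inequality is a product bound. Suppose $H\subseteq \mathbb{R}^d$ is a hyperplane, $A'=A\cap H$ and $A''=A\setminus H$. Choose a linear functional $\varphi$ with $\ker\varphi=H$. Replacing an element $x\in A''$ by $-x$ only translates $\FS(A)$, so we may assume $\varphi>0$ on $A''$.

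We then show
$$|\FS(A)|\;\geq\;|\FS(A'')|_{\varphi}\cdot|\FS(A')|,$$
where $|\FS(A'')|_\varphi$ denotes the number of distinct $\varphi$-values attained by $\FS(A'')$. This holds because $\FS(A)\supseteq \FS(A')+\FS(A'')$, and $\FS(A')\subseteq H$. Hence translates of $\FS(A')$ by points of $\FS(A'')$ lying on different level sets of $\varphi$ are disjoint.

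We also need a lower bound on the number of levels. Order $A''$ by $\varphi$-value and use the chain of partial sums of the largest elements. This gives $|\FS(A'')|_\varphi\geq |A''|+1$, since the $\varphi$-values along the chain are strictly increasing.

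Given this, the induction goes as follows. The base case $d=1$ uses the same device. At most $(1-\varepsilon)n$ points are $0$, so after sign flips at least $\varepsilon n/2$ points are positive and distinct. The classical bound then gives $|\FS(A)|\gg \varepsilon^2 n^2$.

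For the inductive step, it suffices to find a hyperplane $H$ with two properties. First, $|A\cap H|\geq \delta n$ and $|A\setminus H|\geq \delta n$. Second, $A\cap H$ satisfies the hypothesis of \cref{thm:stability} inside $H\cong\mathbb{R}^{d-1}$ with some parameter $\varepsilon'=\varepsilon'(d,\varepsilon)>0$. Here $\delta=\delta(d,\varepsilon)$. Then induction gives $|\FS(A\cap H)|\geq \gamma_{d-1}(\varepsilon')(\delta n)^{d}$, and the product bound yields $|\FS(A)|\gg_{d,\varepsilon} n\cdot n^{d}=n^{d+1}$. The condition $|A\setminus H|\geq\varepsilon n$ is automatic from the hypothesis, so only the first half of the first property needs work.

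The main obstacle is producing such a hyperplane. I would use a greedy "saturation" procedure with a sequence of thresholds $\delta_1\gg\delta_2\gg\cdots\gg\delta_d$, each depending on $d$ and $\varepsilon$. Start with $k=d-1$ and ask whether some $k$-dimensional subspace contains at least $\delta_{d-k}n$ points of $A$.

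Suppose at some stage a subspace $V$ of dimension $k$ carries at least $\delta_{d-k}n$ points. If some proper subspace of $V$ carries more than a $(1-\varepsilon')$ fraction of $A\cap V$, pass to that smaller subspace. This keeps at least $(1-\varepsilon')\delta_{d-k}n\geq\delta_{d-k+1}n$ points. Since the dimension drops at each such step, the procedure terminates after at most $d$ steps. It ends with a subspace $W$ of dimension $j\geq1$ in which $A\cap W$ is $\varepsilon'$-nondegenerate and has size at least $\delta_d n$.

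There are two cases. If $j=d-1$, take $H=W$ and we are done. If $j<d-1$, run the same argument in the quotient $\mathbb{R}^d/W$. Every subspace of the quotient pulls back to a proper subspace of $\mathbb{R}^d$, so the hypothesis with parameter $\varepsilon$ transfers to the image of $A\setminus W$. The difficulty is that this image may have collisions, so it is really a multiset.

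To deal with the collisions, I would prove the theorem in a more flexible "flag" form. For a flag $0=W_0\subset W_1\subset\cdots\subset W_s=\mathbb{R}^d$, suppose each successive layer $(A\cap W_i)\setminus W_{i-1}$ projects to a nondegenerate configuration in $W_i/W_{i-1}$ with many distinct points. Then iterating the level-set product bound layer by layer gives
$$|\FS(A)|\gg\prod_{i}\bigl(\#\text{distinct points in layer } i\bigr)^{\dim(W_i/W_{i-1})+1}\cdot(\text{lower-order losses}),$$
and $\sum_i\bigl(\dim(W_i/W_{i-1})+1\bigr)\geq d+1$. Controlling the collisions in the quotients is therefore the step I expect to require the most care.

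My first attempt at it would be to choose $W$ among subspaces containing at least $\delta n$ points to be maximal with respect to the number of points it contains. Then any fiber of the projection $A\setminus W\to\mathbb{R}^d/W$ that is large would span, together with $W$, a larger heavy subspace, which contradicts the maximality of $W$. This bounds the collisions.
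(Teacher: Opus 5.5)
Your level-set product bound $|\FS(A)|\ge(|A\setminus H|+1)\,|\FS(A\cap H)|$ and your base case are correct. They close the induction whenever some hyperplane $H$ contains at least $\delta n$ points of $A$ that are robustly $(d-1)$-dimensional inside $H$. The genuine gap is that everything after that point (the saturation procedure, the flag form, the maximality trick) assumes a proper subspace carrying $\Omega(n)$ points of $A$. The hypothesis only bounds subspace concentration from \emph{above}, and typical robust sets have no heavy subspace at all:
\begin{itemize}
\item $n$ points $(t,t^2,\dots,t^d)$ with $t\neq 0$ meet every hyperplane through the origin in at most $d-1$ points;
\item in the lattice square $A=[1,N]^2$ with $N=\sqrt n$ (one of the examples showing that the theorem is sharp, with $|\FS(A)|\asymp n^3$), every line through the origin contains at most $\sqrt n$ points.
\end{itemize}
For such $A$ your procedure never starts. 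The only available flag is $0\subset\mathbb{R}^d$, whose single layer is the theorem itself, so the argument is circular exactly in the generic case.

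This cannot be repaired by choosing the splitting subspace more cleverly. In the lattice square, let $W$ be a line through the origin that meets $A$, with primitive direction $(p,q)$, and let $\pi_W$ be orthogonal projection onto $W^\perp$. Then $|\FS(A\cap W)|\ll (N/\max(p,q))^2$, while $\pi_W(\FS(A\setminus W))$ lies in an arithmetic progression of length $O(n\max(p,q)N)$. So the multiplicative bound never exceeds $O(n^{5/2})$. Separately, the maximality trick is vacuous. If $\dim W\le d-2$, then $W+\mathbb{R}x$ is a proper subspace containing more points for any $x\in A\setminus W$. Hence a point-maximal proper subspace is a hyperplane, and you are back to the possibility that $A\cap W$ is degenerate inside it.

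The missing idea is an \emph{additive} gain along a \emph{light} direction, repeated $\Theta(n)$ times, instead of one multiplicative gain across a heavy subspace. The paper removes two points $a,b$, sets $A'=A\setminus\{a,b\}$, and uses $\FS(A)\supseteq\FS(A')+\{a,b\}=b+\FS(A'\cup\{a-b\})$. Adjoining $v=a-b$ creates one new sum for each maximal progression of difference $v$ in $\FS(A')$, so $|\FS(A)|\ge|\FS(A')|+|\pi_{a-b}(\FS(A'))|$.

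The pair is chosen using Ungar's theorem: after a generic projection to the plane, $n$ non-collinear points determine at least $n-1$ directions. Averaging over $n-1$ pairwise non-parallel differences yields some $a-b$ such that at most $n/2$ pairs of points of $A$ differ by a multiple of $a-b$. Hence $\pi_{a-b}(A')$ has at least $(n-4)/2$ distinct points, and a convexity count shows it is still robustly $(d-1)$-dimensional. Induction on $d$ then gives $|\pi_{a-b}(\FS(A'))|\gg n^{d}$. Iterating this $\Omega(\varepsilon n)$ times (the remaining set stays robust) accumulates $\gg n^{d+1}$ new sums.

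Your heavy-subspace splitting could at most serve as an alternative in the complementary regime, where a positive proportion of $A$ lies in a proper subspace. It does not substitute for this step.
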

A more ``stability-flavored'' phrasing of this theorem is that if $A \subseteq \mathbb{R}^d$ is an $n$-element set with $|\FS(A)|=o(n^{d+1})$, then some codimension-$1$ subspace contains $(1-o(1))n$ elements of $A$.  There are several interesting families of examples saturating this statement.  For instance, if $n_1, \ldots, n_d$ are positive integers and $A \subseteq [0,n_1] \times\cdots\times [0,n_d]$ is any $n$-element set, then
$$\FS(A) \subseteq [0,nn_1] \times \cdots\times [0,nn_d]$$
has size at most $(1+nn_1) \cdots(1+nn_d) \ll n^d n_1 \cdots n_d$; the latter quantity is $\ll n^{d+1}$ if the parameters $n_i$ are chosen to have $n_1\cdots n_d \asymp n$.  This construction provides a rich family of $n$-element subsets $A \subseteq \mathbb{R}^d$ that satisfy $|\FS(A)| \ll n^{d+1}$ and avoid clustering in proper subspaces.  In a related direction, let $n_1, \ldots, n_{d-1}$ be positive integers with $n \leq n_1 \cdots n_{d-1} \ll n$, and let $m=m(n) \in [1,n]$ be a small auxiliary parameter.  Choose any subsets $A_1, A_2 \subseteq [0,n_1] \times \cdots \times [0,n_{d-1}]$ with $|A_1|=n-m$ and $|A_2|=m$, and consider the $n$-element set
$$A:=(A_1 \times \{0\}) \cup (A_2 \times \{1\}) \subseteq \mathbb{R}^d.$$
Then notice that
$$\FS(A) \subseteq [0,nn_1] \times \cdots \times [0,nn_{d-1}] \times [0,m]$$
has size $\ll mn^d$; the latter quantity is $o(n^{d+1})$ if $m$ is chosen to be $o(n)$.  Thus for \emph{any} function $m=o(n)$, no matter how slowly $m/n$ decays, if our $n$-element set $A \subseteq \mathbb{R}^d$ is allowed to have all but $m(n)$ elements in a proper subspace, then there are (many) examples with $|\FS(A)|=o(n^{d+1})$.

An interesting variant of this last problem, which we will discuss further in \Cref{sec:open} and which remains open, is determining the minimum possible size of $\FS(A)$ if $A \subseteq \mathbb{R}^d$ is an $n$-element set such that every $d$-element subset of $A$ spans $\mathbb{R}^d$.

\subsection{Notation and terminology}
We use standard asymptotic notation: We write $f=O(g)$ or $f \ll g$ if $|f| \leq Cg$ for some absolute constant $C>0$.  We write $f \asymp g$ or $f=\Theta(g)$ if $f \ll g$ and $g \ll f$, and we write $f(n)=o(g(n))$ if $f(n)/g(n) \to 0$ as $n \to \infty$.  A subscript $P$ (e.g., $O_P$) indicates that the implicit constant may depend (only) on the parameters $P$.  For integers $m \leq n$, we write $[m,n]:=\{m,m+1, \ldots, n\}$; when $m=1$, we also adopt the shorthand $[n]:=[1,n]$. In $\mathbb{R}^d$, we use the term \emph{subspace} to refer to a linear subspace, and we use the term \emph{affine subspace} to refer to a translate of a linear subspace. 

\subsection{Outline} 
We prove \cref{thm:linear}, \cref{thm:stability}, and \cref{thm: main inverse thm} in \cref{sec:linear}, \cref{sec:Rd}, and \cref{sec:main}, respectively.  In \Cref{sec:open} we comment on related work (including the no-$k$-in-a-line problem) and raise several open questions.

\section{Stability near the minimum}\label{sec:linear}
In this section we prove \Cref{thm:linear}, our exact characterization of subsets of $\mathbb{R}_{>0}$ with close to the minimum possible number of subset sums.

Our argument is driven by the observation that $|\FS(B \cup \{x\}) \setminus\FS(B)|$ is the minimum number of arithmetic progressions of common difference $x$ into which $\FS(B)$ can be decomposed.  In particular, if $B \subseteq \mathbb{R}_{>0}$ and $x$ is strictly larger than all of the elements of $B$, then $$|\FS(B \cup \{x\}) \setminus\FS(B)| \geq |B|+1$$ since the $|B|+1$ elements of $\{0\} \cup B \subseteq \FS(B)$ must all belong to different arithmetic progressions.  The following lemma characterizes when equality can occur.

\begin{lemma}\label{lem:one-step-equality}
Let $m \geq 3$ be an integer.  Let $B \subseteq \mathbb{R}_{>0}$ be an $m$-element set, and let $x \in \mathbb{R}$ be a real number strictly larger than all of the elements of $B$.  Then $$|\FS(B \cup \{x\}) \setminus\FS(B)|=m+1$$
if and only if $$B=\left\{\frac{x}{m+1}, \frac{2x}{m+1}, \ldots, \frac{mx}{m+1}\right\}.$$
\end{lemma}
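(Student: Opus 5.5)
The plan is to count the new sums via the decomposition of $\FS(B)$ into maximal arithmetic progressions of common difference $x$, then pin down $\FS(B)\cap[0,x)$ exactly and read off the structure of $B$ from it.

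\emph{Chain decomposition.} Partition $\FS(B)$ into maximal progressions ("chains") $y, y+x, \ldots, y+kx$ of common difference $x$. Then
$$\FS(B\cup\{x\})\setminus\FS(B)=(\FS(B)+x)\setminus \FS(B)$$
contains exactly one element per chain, namely the top of the chain plus $x$. So $|\FS(B \cup \{x\})\setminus \FS(B)|$ equals the number of chains, which is also the number of chain bottoms. A bottom is an element $y\in\FS(B)$ with $y-x\notin\FS(B)$. Every element of $\FS(B)\cap[0,x)$ is a bottom, because $y-x<0$ and $\FS(B)\subseteq[0,\infty)$. Since $\max B<x$, the set $\{0\}\cup B$ consists of $m+1$ distinct elements of $[0,x)$, so there are always at least $m+1$ chains.

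\emph{Consequences of equality.} If there are exactly $m+1$ chains, the bottoms are precisely $\{0\}\cup B$. Hence
$$\FS(B)\cap[0,x)=\{0\}\cup B,$$
and every other element $y\in\FS(B)$ satisfies $y-x\in\FS(B)$. Write $b_1<\cdots<b_m$ for the elements of $B$ and take two steps.

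\begin{enumerate}
\item Show $b_1+b_m=x$. The sum $b_1+b_m$ exceeds $b_m$, so it is not in $\{0\}\cup B$ and is therefore not a bottom. Thus $b_1+b_m-x\in\FS(B)$. This element lies in $[0,b_1)$ because $b_m<x$, and the only element of $\FS(B)$ in $[0,b_1)$ is $0$. So $b_1+b_m=x$.
\item Show $b_{i+1}=b_1+b_i$ for $2\le i\le m-1$. Each such sum satisfies $b_1+b_i<b_1+b_m=x$, so it lies in $\FS(B)\cap[0,x)=\{0\}\cup B$. Since it also exceeds $b_i\geq b_2$, it is an element of $B$ in $(b_2,b_m]$. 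This gives $m-2$ distinct values $b_1+b_2<\cdots<b_1+b_{m-1}$ inside the $(m-2)$-element set $\{b_3,\ldots,b_m\}$. Order-preservation then forces $b_1+b_i=b_{i+1}$.
\end{enumerate}

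By induction $b_i=ib_1$ for all $i$, and then $x=b_1+b_m=(m+1)b_1$. This is exactly the claimed form $B=\{x/(m+1),\ldots,mx/(m+1)\}$. (The hypothesis $m\ge 3$ is not really needed here; $m\ge2$ suffices for the argument.)

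\emph{Converse.} Suppose $B=\{d,2d,\ldots,md\}$ with $x=(m+1)d$. Then
$$\FS(B)=\{0,d,\ldots,\tbinom{m+1}{2}d\},$$
and adjoining $x$ gives $\FS(B\cup\{x\})=\{0,d,\ldots,\binom{m+2}{2}d\}$. The number of new sums is $\binom{m+2}{2}-\binom{m+1}{2}=m+1$.

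The main step is identifying $\FS(B)\cap[0,x)$ with $\{0\}\cup B$. Once that holds, the sums $b_1+b_i$ are squeezed into $B$ and a short counting argument finishes the proof.
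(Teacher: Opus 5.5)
\textbf{There is a genuine gap} at the line ``By induction $b_i=ib_1$ for all $i$.''

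Your step 2 only yields $b_{i+1}=b_1+b_i$ for $2\le i\le m-1$. The case $i=1$ would need $b_1+b_1\in\FS(B)$, and that is not a sum of distinct elements. So what you have actually proved is
$$b_i=b_2+(i-2)b_1 \quad (2\le i\le m) \qquad \text{and} \qquad x=b_2+(m-1)b_1,$$
with $b_2$ still unconstrained.

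Your parenthetical claim that $m\ge 2$ suffices is false, and it fails exactly at this missing step. Take $m=2$, $B=\{1,3\}$ and $x=4$. Then $\FS(B)=\{0,1,3,4\}$ and $\FS(B\cup\{4\})\setminus\FS(B)=\{5,7,8\}$, which has $m+1=3$ elements. Yet $B\neq\{4/3,8/3\}$. The paper makes the same observation: for $m=2$, equality holds whenever $x=b_1+b_2$.

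\textbf{The fix uses $m\ge 3$ and fits your framework.}
\begin{enumerate}
\item Since $b_2\neq b_m$, the element $b_2+b_m$ lies in $\FS(B)$.
\item It exceeds $b_m$, so it is not in $\{0\}\cup B$, hence is not a chain bottom.
\item Therefore $b_2+b_m-x=b_2-b_1\in\FS(B)$, using $x=b_1+b_m$ from your step 1.
\item This number lies in $(0,b_2)\subseteq[0,x)$, so it belongs to $\{0\}\cup B$. The only element of $\{0\}\cup B$ in $(0,b_2)$ is $b_1$, so $b_2=2b_1$.
\end{enumerate}
This is precisely the final step of the paper's proof, which phrases it as $b_2+b_m$ being forced to equal $b_1+x$.

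With this added, the rest of your argument is correct. That includes the identification $\FS(B)\cap[0,x)=\{0\}\cup B$, step 1, the order-preservation counting in step 2, and the converse. Your counting in step 2 is a slightly tidier substitute for the paper's downward induction via interval avoidance, but the overall route is essentially the paper's.
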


\begin{proof}
The ``if'' direction is trivial, so we focus on the ``only if'' direction.  Let $B=\{b_1, \ldots, b_m\}$ with $0<b_1<\cdots<b_m<x$, and assume that $$|\FS(B \cup \{x\}) \setminus\FS(B)|=m+1.$$
It follows from the above discussion that $\FS(B)$ can be decomposed into a union of $m+1$ arithmetic progressions of common difference $x$ and that the starting points of these arithmetic progressions are $0,b_1, \ldots, b_m$.  For notational convenience, set $b_{m+1}:=x$.

We show by downward induction that
$$b_1+b_i=b_{i+1}$$
for all $2 \leq i \leq m$.  For the base case $i=m$, we record the inequalities $$b_{m}<b_1+b_{m}<b_1+b_{m+1}.$$
The arithmetic progressions of common difference $b_{m+1}$ starting at the elements of $B$ do not intersect the interval $(b_m,b_1+b_{m+1})$, so the only possibility is $b_1+b_{m}=b_{m+1}$.  For the induction step, we likewise record the inequalities
$$b_i<b_1+b_i<b_1+b_{i+1}=b_{i+2},$$
and the same argument forces $b_1+b_i=b_{i+1}$.

Thus, we can write
$$(b_1, \ldots, b_{m+1})=(b_1,b_2,b_1+b_2,2b_1+b_2, \ldots, (m-1)b_1+b_2);$$
it remains only to show that $b_2=2b_1$.  It is here that we use the condition $m \geq 3$.  We have
$$b_{m+1}=b_1+b_{m}<b_2+b_{m}<b_2+b_{m+1}$$
(note that $b_2+b_m \in \FS(B)$ since $b_2,b_m$ are distinct), which implies that
$$b_2+b_m=b_1+b_{m+1}.$$
Unraveling the definitions gives
$$(m-2)b_1+2b_2=mb_1+b_2,$$
which rearranges to $b_2=2b_1$.
\end{proof}

We are now ready to prove \Cref{thm:linear}.

\begin{proof}[Proof of \Cref{thm:linear}]
The ``if'' direction of the theorem is easy.  Suppose that $a>0$ and $A \subseteq a \cdot \mathbb{Z}_{>0}$ is an $n$-element set with
$$\sum_{x \in a^{-1} \cdot A}x \leq \binom{n+1}{2}+M.$$
Then each element of $\FS(A)$ is of the form $ma$ for some integer $0 \leq m \leq \binom{n+1}{2}+M$, whence $|\FS(A)| \leq \binom{n+1}{2}+1+M$.

We now turn to the ``only if'' direction.  Let $0 \leq M \leq n-4$, and assume that $A \subseteq \mathbb{R}_{>0}$ is an $n$-element set with $|\FS(A)| \leq \binom{n+1}{2}+1+M$.  Write $A=\{a_1, \ldots, a_n\}$, where $0<a_1<\cdots<a_n$.  For each $1 \leq i \leq n$, define the set $$A(i):=\{a_1, \ldots, a_i\}$$
and the quantity
$$y_i:=|\FS(A(i)) \setminus \FS(A(i-1))|-i$$
(with the convention $A(0):=\emptyset$ and $\FS(\emptyset)=\{0\}$).  From
$$1+\sum_{i=1}^n |\FS(A(i)) \setminus \FS(A(i-1))|=|\FS(A)|\leq \binom{n+1}{2}+1+M$$
we obtain the key inequality
$$\sum_{i=1}^n y_i \leq M.$$

By the observation mentioned at the beginning of this section, $y_1,\ldots,y_n$ are all nonnegative integers, so $y_i>0$ for at most $M$ indices $i$.  In particular, we have $y_{i_0}=0$ for some $i_0 \geq n-M$.  Since $i_0-1 \geq n-M-1 \geq 3$, \Cref{lem:one-step-equality} with $B:=A(i_0-1)$ and $x:=a_{i_0}$ tells us that
$$(a_1, \ldots, a_{i_0})=(a_1,2a_1, \ldots, i_0 a_1).$$
It follows that $$\FS(A(i_0))=a_1 \cdot\left\{0,1, \ldots, \binom{i_0+1}{2}\right\}.$$
Let $i_1$ be the largest index such that $\FS(A(i_1))$ is an arithmetic progression of difference $a_1$.  We have just seen that $i_1 \geq i_0$, and we will complete the proof by showing that in fact $i_1=n$.

Assume for the sake of contradiction that $i_1<n$.  Consider
$$\FS(A(i_1+1))=\FS(A(i_1)) \cup (a_{i_1+1}+\FS(A(i_1))).$$
Since $\FS(A(i_1+1))$ is not an arithmetic progression of common difference $a_1$, the translate $a_{i_1+1}+\FS(A(i_1))$ must be disjoint from $\FS(A(i_1))$.  Then
$$y_{i_1+1}=|\FS(A(i_1))|-(i_1+1)\geq \binom{i_1+1}{2}-i_1.$$
Notice that $A(i_1+1)$ is not a homogeneous arithmetic progression.  Thus, for each $i_1+2 \leq  i \leq n$, the set $A(i)$ also fails to be a homogeneous arithmetic progression, and \Cref{lem:one-step-equality} gives $y_i \geq 1$.  Now the estimate
\begin{equation}\label{eq:stability-y's-ineq}
\sum_{i=1}^n y_i \geq y_{i_1+1}+\sum_{i=i_1+2}^n y_i \geq \binom{i_1+1}{2}-i_1+(n-i_1-1)=n+\frac{i_1(i_1-3)}{2}-1 \geq n-1>M
\end{equation}
gives a contradiction.

We have shown that $\FS(A)=\FS(A(n))$ is an arithmetic progression with common difference $a_1$.  It follows that $A \subseteq a_1 \cdot \mathbb{Z}_{>0}$ and
$$\FS(A)=a_1 \cdot \left\{0, 1, \ldots, \sum_{x \in a_1^{-1} \cdot A}x \right\}.$$
Thus, we have
$$\sum_{x \in a_1^{-1} \cdot A}x=|\FS(A)|-1 \leq \binom{n+1}{2}+M,$$
as desired.
\end{proof}

The preceding argument breaks down for $M=n-3$ essentially because \Cref{lem:one-step-equality} fails for $m=2$ (equality there holds whenever $B=\{b_1,b_2\}$ and $x=b_1+b_2$).  In order to get a contradiction in \eqref{eq:stability-y's-ineq} with $M=n-3$, one would need $i_1 \geq 2$, but in the absence of \Cref{lem:one-step-equality}, the best that one can guarantee is $i_1 \geq 1$.

\section{Sumsets in $\mathbb{R}^d$}\label{sec:Rd} 

In this section we prove \cref{thm:stability}, our result about subset sums in $\mathbb{R}^d$.

We induct on the dimension $d$ and the size $n$ of our set $A \subseteq \mathbb{R}^d$. The main idea is to show that we can always find distinct elements $a,b \in A$ such that $\FS(A)$ is significantly larger than $\FS(A \setminus \{a,b\})$.  More precisely, writing $A':= A \setminus \{a,b\}$, we always have the inclusion 
$$\FS(A)=\FS(A')+\{0,a,b,a+b\} \supseteq \FS(A')+\{a,b\}=b+\FS(A' \cup \{a-b\}).$$
The size of the right-hand side is $|\FS(A')|$ plus the minimum number of arithmetic progressions of common difference $a-b$ into which $\FS(A')$ can be decomposed; a lower bound for the latter is the size of the projection of $\FS(A')$ onto the orthogonal complement of $a-b$ in $\mathbb{R}^d$.\footnote{A more natural strategy would be to remove points one at a time rather than in pairs, but complications arise if $A$ has many points on a small number of lines through the origin.}

We need a bit of notation.  For each nonzero $v \in \mathbb{R}^d$, let $v^\perp$ denote the orthogonal complement of $v$, and let $\pi_v: \mathbb{R}^d \to v^\perp$ be the orthogonal projection.  The observation from the previous paragraph can be expressed as
\begin{equation}\label{eq:proj-ineq}
|\FS(A)| \geq \max_{a,b \in A \text{ distinct}} \left(|\FS(A \setminus \{a,b\})|+|\pi_{a-b}(\FS(A \setminus \{a,b\}))| \right).
\end{equation}
We would like to identify a choice of $a,b \in A$ for which $\pi_{a-b}(\FS(A \setminus \{a,b\}))$ is large.  To this end, for each nonzero vector $v \in \mathbb{R}^d$ and each finite set $B \subseteq \mathbb{R}^d$, let $E_B(v)$ denote the collection of $2$-element subsets $\{b_1,b_2\} \subseteq B$ such that $\pi_v(b_1)=\pi_v(b_2)$; thus, $E_B(v)$ records the ``collisions'' that result from projecting $B$ onto $v^\perp$.  The following lemma shows that for any subset $A \subseteq \mathbb{R}^d$, there are distinct $a,b \in A$ such that $E_A(a-b)$ is small.

\begin{lemma} \label{lem: find v}
Let $d \geq 2$ and $n \geq 3$ be integers, and let $A \subseteq \mathbb{R}^d$ be an $n$-element set that is not contained in any line.  Then there are distinct $a, b\in A$ such that $|E_A(a-b)| \leq n/2$.
\end{lemma}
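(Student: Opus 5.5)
The plan is to turn this into an averaging statement over directions. It rests on a classical lower bound for the number of directions determined by a non-collinear point set, namely Ungar's theorem.

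First, note that $E_A(v)$ depends only on the line $\mathbb{R}v$, and that $\{b_1,b_2\}\in E_A(v)$ exactly when $b_1-b_2$ is parallel to $v$. So each of the $\binom{n}{2}$ two-element subsets of $A$ lies in $E_A(v)$ for exactly one direction $[v]$, namely $[b_1-b_2]$. Let $D$ be the set of directions $[a-b]$ with $a,b\in A$ distinct. Then
$$\sum_{[v]\in D}|E_A(v)|=\binom{n}{2}.$$
It therefore suffices to show $|D|\ge n-1$. In that case some direction $[a-b]\in D$ satisfies
$$|E_A(a-b)|\le \binom{n}{2}\Big/ (n-1)=\frac{n}{2},$$
which is exactly the claim.

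Second, reduce to $d=2$. Choose a linear map $\phi:\mathbb{R}^d\to\mathbb{R}^2$ that is generic with respect to the finite configuration, so that three conditions hold. It is injective on $A$. It sends non-parallel difference vectors $a-b$, $a'-b'$ to non-parallel vectors. It keeps $A$ non-collinear. Each condition excludes only a proper algebraic subset of maps: the first two are finitely many conditions on pairs, and for the third, $A$ contains three affinely independent points whose images must stay affinely independent. Under such a $\phi$, the number of directions determined by $\phi(A)$ equals $|D|$.

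Third, invoke Ungar's theorem: $n$ non-collinear points in the plane determine at least $2\lfloor n/2\rfloor\ge n-1$ directions. This gives $|D|\ge n-1$ and finishes the proof.

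The main obstacle is this direction-count step, since it is where non-collinearity is used. If one prefers a self-contained argument over citing Ungar, the weaker bound $|D|\ge n-1$ needs its own proof. I would first try the following: take a vertex $p$ of the convex hull of $A$ and consider the directions from $p$ to the other points. These are pairwise distinct except where several points lie on a common ray from $p$. Each such ray with $k$ points should then be compensated by directions between the collinear points on it and the points off it. Here non-collinearity guarantees that some point lies off the ray. I expect the bookkeeping in this compensation to be the fiddly part. Citing Ungar's theorem avoids it entirely, and that is the route I would take.
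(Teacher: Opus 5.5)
Your proof is correct and is essentially the paper's argument: project generically to $\mathbb{R}^2$, apply Ungar's theorem to obtain at least $n-1$ pairwise non-parallel difference directions, and average the $\binom{n}{2}$ pairs over these directions to find one with $|E_A(a-b)|\le n/2$. The only difference is harmless: you ask the projection to also preserve non-parallelism so that the two direction counts are equal, whereas the paper needs only the automatic fact that non-parallel images come from non-parallel differences, which already gives $|D|\ge n-1$.
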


\begin{proof}
Let $\pi: \mathbb{R}^d \to \mathbb{R}^2$ be a projection that is injective on $A$ and has the property that $\pi(A)$ is not contained in any line (a generic projection will work).  It is a classical result of Ungar~\cite{U82} that every $n$-element non-collinear point set in the plane determines lines in at least $n-1$ different directions.  In our setting, Ungar's theorem produces pairs $(a_1,b_1), \ldots, (a_{n-1},b_{n-1}) \in A \times A$ such that the vectors $\pi(a_1)-\pi(b_1), \ldots, \pi(a_{n-1})-\pi(b_{n-1})$ are all non-parallel (and in particular nonzero).  It follows that the vectors $a_1-b_1, \ldots, a_{n-1}-b_{n-1}$ are all non-parallel. This ensures that each $2$-element subset of $A$ is contained in $E_A(a_i-b_i)$ for at most a single index $i$.  Thus
$$\sum_{i=1}^{n-1}|E_A(a_i-b_i)| \leq \binom{n}{2},$$
and by averaging there is some $i$ such that $|E_A(a_i-b_i)| \leq n/2$.
\end{proof}

We wish to apply \eqref{eq:proj-ineq} with the elements $a,b \in A$ produced by \cref{lem: find v}.  First, however, we must check that the set $A \setminus \{a,b\}$ remains ``sufficiently $d$-dimensional'' for us to apply our induction hypothesis.  The following notion of ``sufficiently $d$-dimensional'', which is finer than the notion in the statement of \cref{thm:stability}, will do the trick.  For positive integers $d,m,n$ with $m \geq d-1$, let $\Xi_d(n,m)$ denote the collection of $n$-element subsets $A \subseteq \mathbb{R}^d$ such that each proper subspace of $\mathbb{R}^d$ contains at most $m$ elements of $A$; the latter condition is equivalent to requiring that every $(m+1)$-element subset of $A$ spans all of $\mathbb{R}^d$.  Our quantity of interest is
\[
f_d(n,m):=\min\{|\FS(A)|:A\in\Xi_{d}(n,m)\},
\]
which is weakly increasing in $n$ and weakly decreasing in $m$. We are interested in lower-bounding $f_d(n, (1-\varepsilon)n)$. \cref{lem: find v} and \eqref{eq:proj-ineq} lead to the following recursive lower bound for $f_d(n,m)$.

\begin{lemma} \label{lem:refined}
Let $d,n,m$ be integers with $n-2\geq m\geq d-1 \geq 1$ and $m\geq 3$. Then
\[
f_{d}(n, m)\geq f_d(n-2, m) + \min_{\frac{n-4}{2}\leq\ell\leq n-2}f_{d-1}\left(\ell, \ell-\left\lceil \frac{(n-2-m)^2}{2n-2-m}\right\rceil\right).
\]
\end{lemma}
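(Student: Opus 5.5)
The plan is to take a minimizer $A\in\Xi_d(n,m)$, so that $|\FS(A)|=f_d(n,m)$, and apply \eqref{eq:proj-ineq} with the pair $a,b$ supplied by \cref{lem: find v}. Put $v:=a-b$ and $A':=A\setminus\{a,b\}$. Deleting two points cannot increase the number of points in any proper subspace, so $A'\in\Xi_d(n-2,m)$ and $|\FS(A')|\ge f_d(n-2,m)$. It therefore suffices to show that $|\pi_v(\FS(A'))|\ge f_{d-1}(\ell,\ell-k)$ for some $\ell\in[\frac{n-4}{2},n-2]$, where $k:=\lceil (n-2-m)^2/(2n-2-m)\rceil$. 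To get there, identify $v^\perp\cong\RR^{d-1}$. Let $B'\subseteq v^\perp$ be the set of distinct nonzero values in $\pi_v(A')$, with one representative in $A'$ chosen for each value, and let $\ell:=|B'|$. Every subset sum of $B'$ is the projection of the corresponding subset sum of the representatives, so $\FS(B')\subseteq\pi_v(\FS(A'))$. Hence $|\pi_v(\FS(A'))|\ge f_{d-1}(\ell,\ell-k)$ as soon as $B'\in\Xi_{d-1}(\ell,\ell-k)$.

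\textbf{Size of $B'$.} The points of $A'$ fall into collision classes under $\pi_v$, and the number of classes is at least $(n-2)-|E_{A'}(v)|\ge n-2-n/2$. One class may be the class of $0$, i.e.\ the points of $A'$ on the line $\RR v$, and this class also has to be accounted for; it either is one of the classes counted by $E$ or is a single point. Modulo this bookkeeping we get $\ell\ge\frac{n-4}{2}$, and trivially $\ell\le n-2$.

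\textbf{Spanning condition.} Let $W\subsetneq v^\perp$ be a proper subspace. Then $W+\RR v$ is a proper subspace of $\RR^d$, so it contains at most $m$ points of $A$. Consequently at least $s:=n-2-m$ points of $A'$ lie outside $W+\RR v$, and their projections lie outside $W$. Split these $s$ points into collision classes of sizes $c_1,\dots,c_t$. Then $\sum_j c_j=s$ and $\sum_j\binom{c_j}{2}\le|E_A(v)|\le n/2$. By Cauchy--Schwarz,
$$t\ \ge\ \frac{s^2}{\sum_j c_j^2}\ \ge\ \frac{s^2}{s+n}\ =\ \frac{(n-2-m)^2}{2n-2-m}.$$
So $B'$ has at least $k$ points outside $W$, i.e.\ $W$ contains at most $\ell-k$ points of $B'$, which is exactly $B'\in\Xi_{d-1}(\ell,\ell-k)$. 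The hypotheses $m\ge3$, $m\ge d-1$ and $n-2\ge m$ are what keep all the parameters in range, for example $\ell-k\ge d-2$. If necessary I will also use that $f_{d-1}$ is monotone in its second argument.

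\textbf{Main obstacle.} The delicate step is the hypothesis of \cref{lem: find v}, namely that $A$ is not contained in a line. For $d\ge3$ this is automatic: a line spans a subspace of dimension at most $2$, which is proper and would contain $n>m$ points. For $d=2$, however, $A$ may lie on an affine line not through the origin, and then every pair collides under $\pi_v$, so the argument above breaks down. In that case I would argue directly. Write $A=\{p+t_iu\}$ with $t_1<\dots<t_n$, take $a,b$ to be the two points with the largest $t$, and count $\FS(A)$ layer by layer, where layer $j$ consists of the sums of exactly $j$ points. In each layer $j$, the sums involving $t_n$ or $t_{n-1}$ should produce on the order of $j$ values exceeding the maximum of the $j$-th layer of $\FS(A')$. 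Summing over $j$ gives a gain of order $n^2$. This gain should dominate the choice $\ell=\lceil\frac{n-4}{2}\rceil$, because $f_1(\ell,\cdot)\le|\FS(\{1,\dots,\ell\})|=\binom{\ell+1}{2}+1\approx n^2/8$. I expect this degenerate case, together with the small-$n$ boundary cases, to need the most care.
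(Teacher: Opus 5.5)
Your main-case argument is the paper's. You use the same extremal $A$, the same pair from \cref{lem: find v}, the same $A'$, and the same convexity count; your Cauchy--Schwarz step is exactly the paper's $k\ge |W|^2/(n+|W|)$.

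You diverge only in the degenerate case $d=2$, where $A\subseteq L$ for an affine line $L\not\ni 0$.

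\textbf{The paper's handling.} It disposes of this case in one line. Replacing one element $x\in A$ by $-x$ translates $\FS(A)$ by $-x$. It keeps the set in $\Xi_d(n,m)$, since subspaces are symmetric and $-x\notin A$ (the line $-L$ is parallel to and disjoint from $L$). The new set has $n-1$ points on $L$ and one on $-L$, so it is an extremal set not contained in any line, and the generic argument applies to it.

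\textbf{Your handling.} Your direct layer count also works, but you need explicit constants rather than ``order $j$''. Write $L=p+\RR u$ and $T=\{t_1<\dots<t_n\}$. Since $p\notin\RR u$, you have $|\FS(A)|=\sum_{j=0}^{n}|j^{\wedge}T|$, and likewise for $A'$ with $T'=T\setminus\{t_{n-1},t_n\}$.
\begin{itemize}
\item For $1\le j\le n-2$, raise indices one step at a time from $\{n-j-1,\dots,n-2\}$ to $\{n-j+1,\dots,n\}$. This gives $2j$ sums strictly above $\max j^{\wedge}T'$.
\item Layers $n-1$ and $n$ are entirely new, contributing $n+1$.
\item So the gain is at least $n^2-2n+3$.
\item Set $\ell_0:=\lceil (n-4)/2\rceil$. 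Using $m\ge 3$ one checks $k\le \ell_0$, so $[\ell_0]\in\Xi_1(\ell_0,\ell_0-k)$. Hence the minimum in the statement is at most $\binom{\ell_0+1}{2}+1\le \frac{(n-1)(n-3)}{8}+1$, which the gain exceeds.
\end{itemize}
The paper's trick is shorter and reduces everything to one case. Your count is self-contained and gives a far larger gain, but it is specific to $d=2$.

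\textbf{Excluding $0$ from $B'$.} This is unnecessary, and your ``modulo this bookkeeping'' is where it costs you. The paper simply takes $\pi_v(A')$, getting $\ell\ge (n-2)-|E_A(v)|$ at once. Since $0$ lies in every $W$, keeping it does not change the count of points outside $W$. If you do exclude $0$, recover the lost point by noting $\{a,b\}\in E_A(v)\setminus E_{A'}(v)$. Then $|E_{A'}(v)|\le n/2-1$, giving at least $n/2-1$ collision classes and hence at least $(n-4)/2$ nonzero ones.
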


\begin{proof}
Choose $A \in \Xi_d(n,m)$ with $|\FS(A)|=f_d(n,m)$.  Notice that $A$ is not contained in any line through the origin because it spans $\mathbb{R}^d$. Because $|\FS(A)|$ does not change if we negate some elements of $A$, we may assume that $A$ is moreover not contained in any affine line.  Thus, \cref{lem: find v} provides distinct elements $a,b \in A$ such that $|E_A(a-b)| \leq n/2$.  For brevity, set $v:=a-b$ and $A':=A \setminus \{a,b\}$.  It is clear that $A' \in \Xi_d(n-2,m)$, so by definition $|\FS(A')| \geq f_d(n-2,m)$. 

We now turn to
$\FS(\pi_{v}(A'))$. First, notice that trivially
$$\ell:=|\pi_{v}(A')| \geq (n-2)-|E_A(v)| \geq \left\lceil\frac{n-4}{2}\right\rceil.$$
Next, we claim that 
$$\pi_{v}(A') \in \Xi_{d-1}\left(\ell, \ell-\left\lceil \frac{(n-m-2)^2}{2n-2-m}\right\rceil\right),$$
where we have identified $v^\perp$ with $\mathbb{R}^{d-1}$.  The claim amounts to showing that no proper subspace of $v^\perp$ contains too many elements of $\pi_{v}(A')$.  Let $H \subseteq v^{\perp}$ be a proper subspace.  Since $A'\in \Xi_{d}(n-2,m)$, there is an $(n-2-m)$-element set $$W\subseteq A' \setminus \pi_{v}^{-1}(H).$$
Let $w_1,\ldots, w_k$ be the elements of $\pi_v(W)$; of course $w_1,\ldots,w_k\not\in H$. Convexity gives
$$\frac{n}{2} \geq |E_A(v)| \geq |E_{W}(v)|=\sum_{i=1}^k \binom{|\pi_{v}^{-1}(w_i)\cap W|}{2} \geq k \binom{|W|/k}{2}= \frac{|W|^2}{2k} - \frac{|W|}{2},$$
where the penultimate inequality used that $\sum_i |\pi_{v}^{-1}(w_i)\cap W|=|W|$.  This rearranges to
\[
k\geq \frac{|W|^2}{n+|W|} = \frac{(n-2-m)^2}{2n-2-m},
\]
which establishes the claim. The conclusion follows. 
\end{proof}

We prove \cref{thm:stability} by iteratively applying \cref{lem:refined}. 

\begin{proof}[Proof of~\cref{thm:stability}] 
We induct on the dimension $d$.  The base case $d=1$ follows from the result of Nathanson~\cite{N95} mentioned in the introduction: Any set of $n$ real numbers contains at least $(n-1)/2$ nonzero elements of the same sign and hence has $\Omega(n^2)$ distinct subset sums.

For the induction step, let $d \geq 2$ and $\varepsilon > 0$, and assume that $n$ is sufficiently large in terms of $d, \varepsilon$.  Applying \cref{lem:refined} $\left\lfloor\varepsilon n/4\right\rfloor$ times (note each application decreases $n$ by $2$) yields
\begin{align*}
f_{d}\left(n, \lceil (1-\varepsilon) n\rceil\right) &\geq 1+ \sum_{i=0}^{\left\lfloor\varepsilon n/4\right\rfloor-1} \min_{\frac{n-2i-4}{2}\leq\ell\leq n-2i-2}f_{d-1}\left(\ell, \ell-\left\lceil\frac{((n-2i)-2-\lceil (1-\varepsilon) n\rceil)^2}{2(n-2i)-2-\lceil (1-\varepsilon) n\rceil}\right\rceil\right).
\end{align*}
There is some $\varepsilon'=\varepsilon'(\varepsilon) >0$ such that
$$\left\lceil\frac{(n-2i-2-\lceil (1-\varepsilon) n\rceil)^2}{2(n-2i)-2-\lceil (1-\varepsilon) n\rceil}\right\rceil \geq \varepsilon' \ell$$
holds for all pairs $(i,\ell)$ under consideration in this sum.
Thus, due to the monotonicity of $f_d$, the induction hypothesis with $d-1$, $\varepsilon'$ gives
\begin{align*}
    f_d(n, \lceil (1-\varepsilon)n\rceil) &\geq 1 + \sum_{i=0}^{\lceil \varepsilon n/4\rceil-1} \min_{\frac{n-2i-4}{2}\leq\ell\leq n-2i-2} f_{d-1}(\ell, \lceil (1-\varepsilon')\ell\rceil) \\
    &\geq \lceil \varepsilon n/4\rceil \cdot \gamma_{d-1}(\varepsilon') \left(\frac{(n-2\lceil \varepsilon n/4\rceil-4)}{2}\right)^{d} \\
    &\gg_{d, \varepsilon} n^{d+1},
\end{align*}
as required. 
\end{proof}

\section{Stability farther from the minimum}\label{sec:main}

We now turn to \cref{thm: main inverse thm}, our characterization of $n$-element sets of positive reals with $O(n^2)$ subset sums.  Let us sketch the main steps of the argument.  Let $C>0$ be a constant, and suppose that $A$ is a set of $n$ positive reals with $|\FS(A)| \leq Cn^2$.  

We begin by pigeonholing to find a subset $A' \subseteq A$ of size $|A'| \geq n/2$ (say) with doubling $O_C(1)$.  The Freiman--Ruzsa Theorem produces a proper symmetric GAP $Q$ of rank $r=O_C(1)$ in which $A'$ is $\Omega_C(1)$-dense.  Using the generators of $Q$ as coordinates, we identify $Q$ with a box $\widetilde{Q}$ of size $|Q|$ in $\mathbb{Z}^r$ that is centered at the origin.  Let $\pi: \mathbb{Z}^r \to \mathbb{R}$ denote the projection map that sends $\widetilde{Q}$ to $Q$, and identify $A'$ with its lift $\widetilde{A'} =\pi^{-1}(A')\subseteq \widetilde{Q}\subset \mathbb{R}^r$. Since $\FS(A') = \pi(\FS(\widetilde{A'}))$, we will use the subset sums of $\widetilde{A'}$ as a proxy for the subset sums of $A'$. 

A technical ``cleaning'' step (see \cref{lem:cleaning-2} below) lets us reduce to the case where no proper subspace of $\mathbb{R}^r$ contains more than half (say) of the elements of $\widetilde{A'}$. Now \cref{thm:stability} guarantees the expansion
$$|\FS(\widetilde{A'})| \gg_C |\widetilde{A'}|^{r+1} \gg_C n^{r+1}.$$
The constraint $|\FS(A')| \leq |\FS(A)| \leq C n^2$ forces $\pi$ to be ``very non-injective'' on $\FS(\widetilde{A'})$. In particular, the enlarged box $n\widetilde{Q}$, which certainly contains $\FS(\widetilde{A'})$, satisfies
\begin{equation}\label{eq:fiber bound}
    \max_{u\in \mathbb{R}} |\pi^{-1}(u) \cap n\widetilde{Q}| \geq \frac{|\FS(\widetilde{A'})|}{|\FS(A')|} \gg_C n^{r-1}.
\end{equation}
Drawing on ideas from the geometry of numbers, we show (see \cref{prop:proj'} below) that the left-hand side of \eqref{eq:fiber bound} can be this large only if all of the generators of $Q$ are small integer multiples of a single real number; this ``collapse to rank $1$'' is the most involved part of the argument.  We deduce that $A'=\pi(\widetilde{A'})$ is contained in a homogeneous arithmetic progression of length $O_C(n)$.

The last step is upgrading this structural characterization of $A'$ to a structural characterization of $A$.  To achieve this, we use the fact that $\FS(A')$ is a dense ($\Omega_C(1)$-fraction) subset of an interval of length $\Theta_C(n^2)$ and hence $\FS(A)=\FS(A \setminus A')+\FS(A')$ cannot contain too many disjoint translates of $\FS(A')$. The remainder of this section fleshes out the details.

\subsection{Finding a piece of local structure}\label{subsec:small-doubling}

We begin by locating a large additively structured subset of $A$; we will only ever use the following lemma with $\delta=1/2$ (any constant would do).

\begin{lemma}\label{lem:smalldoubling}
Let $C>0$ and $0<\delta<1$ be reals, and let $n$ be a positive integer.  If $A \subseteq \mathbb{R}_{>0}$ is an $n$-element set with $|\FS(A)| \leq Cn^2$, then there is a subset $A' \subseteq A$ of size $|A'| \geq \delta n$ such that 
$$|A'+A'| \ll_{C,\delta}|A'|.$$
\end{lemma}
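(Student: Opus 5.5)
Take $A'$ to be the $k:=\lceil \delta n\rceil$ smallest elements of $A$, and produce many pairwise disjoint translates of the restricted sumset $A'\hat{+}A':=\{x+y: x,y\in A',\ x\neq y\}$ inside $\FS(A)$. The hypothesis $|\FS(A)|\le Cn^2$ then forces $|A'\hat{+}A'|\ll_{C,\delta} n$. Passing to the full sumset costs little, since
$$A'+A'=(A'\hat{+}A')\cup 2\cdot A'$$
and $|2\cdot A'|=k$. Since $k\asymp_\delta n$, this gives $|A'+A'|\ll_{C,\delta}|A'|$.

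Write $A=\{a_1<\cdots<a_n\}$, so $A'=\{a_1,\ldots,a_k\}$, and let $R=\{r_1<\cdots<r_m\}$ be the remaining $m=n-k$ elements. Every element of $R$ exceeds $a_k=\max A'$.

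If $n$ is bounded in terms of $\delta$ (so that, say, $m<2$), the trivial bound $|A'+A'|\le k^2$ already suffices. So assume $m\ge 2$; then $m\gg_\delta n$ once $n$ is large in terms of $\delta$.

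Set $T_j:=r_1+\cdots+r_j$ for $0\le j\le m$. Each set $T_j+(A'\hat{+}A')$ lies in $\FS(A)$, because it consists of sums over the disjoint union of $\{r_1,\ldots,r_j\}$ with a $2$-element subset of $A'$.

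Every element of $A'\hat{+}A'$ lies in the interval $(0,2a_k)$. Consecutive prefix sums $T_j$ differ by only one $r_{j+1}$, which beats $a_k$ but not necessarily $2a_k$, so I use only even indices. We have $T_{2j+2}-T_{2j}=r_{2j+1}+r_{2j+2}>2a_k$. Hence the translates $T_{2j}+(A'\hat{+}A')$ for $0\le j\le \lfloor m/2\rfloor$ lie in pairwise disjoint intervals $(T_{2j},T_{2j}+2a_k)$.

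This gives
$$(\lfloor m/2\rfloor+1)\,|A'\hat{+}A'|\le |\FS(A)|\le Cn^2.$$
Since $m\gg_\delta n$, we get $|A'\hat{+}A'|\ll_{C,\delta} n$. Combined with the first paragraph, $|A'+A'|\ll_{C,\delta} n\ll_\delta |A'|$.

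I do not expect a real obstacle. The only points needing care are the even-index trick, which ensures the translates are genuinely disjoint (consecutive prefix sums are too close together), and the use of distinct pairs, so that every translate really consists of subset sums.
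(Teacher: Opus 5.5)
Your proof is correct, but it is organized differently from the paper's.

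The paper sets $z_i:=|\FS(A(i+1))\setminus\FS(A(i))|$ and averages $\sum_i z_i\le Cn^2$ over $i\ge\delta n$. This gives a single index with $z_i\ll_{C,\delta}n$. It then reads $z_i$ as the minimum number of arithmetic progressions of difference $a_{i+1}$ needed to cover $\FS(A(i))$. Each such progression meets $[0,2a_i]$ in at most two points, so $|A(i)\widehat{+}A(i)|\le 2z_i$, and the paper takes $A':=A(i)$ for this pigeonholed $i$.

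You instead fix $A'$ to be the $\lceil\delta n\rceil$ smallest elements. You then pack disjoint translates of $A'\widehat{+}A'$ into $\FS(A)$ along even-indexed prefix sums, the same partial-sum device the paper uses later in \cref{lem:local-to-global}.

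Both arguments rest on one fact: elements above $\max A'$ exceed half the width of the window $(0,2\max A')$ that contains $A'\widehat{+}A'$. Your pairing of consecutive $r$'s plays the role of the paper's ``at most two points per progression.'' In fact, summing the paper's per-step bound $2z_i\ge|A(i)\widehat{+}A(i)|\ge|A(k)\widehat{+}A(k)|$ over $k\le i\le n-1$ recovers essentially your inequality.

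What your version buys is an explicit $A'$ with no averaging step. Your case split is also unnecessary, since $\lfloor m/2\rfloor+1\ge(m+1)/2>(1-\delta)n/2$ for every $m\ge 0$. What the paper's version buys is that it isolates the gain in a single added element, which matches the incremental framework used for \cref{thm:linear}.
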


\begin{proof}
We may assume that $n \geq \delta^{-1}$ (say), as otherwise the lemma is trivial.  As in the proof of \cref{thm:linear}, write $A=\{a_1, \ldots, a_n\}$, where $0<a_1<\cdots<a_n$, and define $A(i):=\{a_1, \ldots, a_i\}$.  Further define
$$z_i := |\FS(A(i+1)) \setminus \FS(A(i))|,$$
so that
$$\sum_{i=0}^{n-1} z_i =|\FS(A)|-1 \leq Cn^2.$$
It follows that there is some index $i\geq \delta n \geq 1$ such that $z_i \leq 2(1-\delta)^{-1}Cn=O_{C,\delta}(n)$ (say).

We now show that $A(i)$ has doubling $O_{C,\delta}(1)$.  Recall that $z_i$ is the minimum number of arithmetic progressions of common difference $a_{i+1}$ into which $\FS(A(i))$ can be decomposed.  Since each such arithmetic progression contains at most $2$ elements of the interval $[0,2a_i]$, we have
$$2z_i \geq |\FS(A(i)) \cap [0,2a_i]| \geq |A(i) \widehat{+}A(i)| \geq |A(i)+A(i)|-i,$$
where $A(i) \widehat{+}A(i)=\{x+y:x,y\in A(i),\, x\neq y\}$ is the restricted sumset.  Rearranging gives
$$|A(i)+A(i)| \leq 2z_i+i \ll_{C,\delta} i=|A(i)|,$$
and we conclude the lemma with $A':=A(i)$.
\end{proof}

The set $A'$ produced by this lemma is ripe for an application of the Freiman--Ruzsa Theorem (see, e.g., \cite[Theorem 5.33]{TV}), which says that if $C>0$ is a constant and $B \subseteq \mathbb{R}$ is a finite set with $|B+B| \leq C|B|$, then $B$ is contained in a proper symmetric GAP of rank $O_C(1)$ and size $O_C(|B|)$.  Recall that a \emph{symmetric GAP} is a set of the form
\begin{equation}\label{eq:GAP}
    Q:=\left\{\sum_{j=1}^{r} t_j x_j : -s_j \leq x_j\leq s_j,\;
  x_j \in \mathbb{Z} \right\}=t_1 \cdot [-s_1,s_1]+\cdots +t_r \cdot [-s_r,s_r],
\end{equation} 
where $t_1, \ldots, t_r$ are nonzero reals and $s_1, \ldots, s_r$ are positive integers.  The \emph{rank} of $Q$ is $r$, and we say that $Q$ is \emph{proper} if it has size $|Q|=(2s_1+1) \cdots (2s_r+1)$ (i.e., the various choices of $(x_1, \ldots, x_r)$ all produce distinct elements of $Q$).\footnote{The Freiman--Ruzsa Theorem is often stated without the symmetry and properness conditions that we included here.  It is easy to see that any GAP $Q$ of rank $r$ is contained in a symmetric GAP $Q'$ of rank at most $r+1$ and size $|Q'| \ll_r |Q|$.  A general fact about GAPs (see \cite[Theorem 3.40]{TV}) then implies that $Q'$ is contained in a proper symmetric GAP $Q''$ of the same rank and size $|Q''| \ll_r |Q'| \ll_r |Q|$.}  
We say that $t_1, \ldots, t_r$ are the \emph{common differences} of $Q$ and that $2s_1+1, \ldots, 2s_r+1$ are its \emph{side lengths}. Every GAP $Q$ comes with a natural projection map $\pi=\pi_{t_1, \ldots, t_r}\colon \ZZ^r \to \RR$ given by $$\pi(x_1, \ldots, x_r):= t_1 x_1+\cdots+ t_r x_r.$$
By definition, $Q$ is the image under $\pi$ of the discrete box $$\widetilde{Q}:=[-s_1,s_1] \times \cdots \times [-s_r,s_r];$$
this map is bijective if and only if $Q$ is proper, in which case we can identify each subset $R \subseteq Q$ with its \emph{lift} $\widetilde{R}:=\pi^{-1}(R) \cap \widetilde{Q}$. 

In the setting of \cref{thm: main inverse thm}, \cref{lem:smalldoubling} with $\delta:=1/2$ provides some $A' \subseteq A$ of size $|A'| \gg_C n$ such that $|A'+A'| \ll_C |A'|$, and the Freiman--Ruzsa Theorem produces a proper symmetric GAP $Q\supseteq A'$ of rank $r=O_C(1)$ and size $O_C(n)$. In the next several steps of our argument, we will study the subset sums of $\widetilde{A'}$ (as a proxy for the subset sums of $A'$).

\subsection{Cleaning step} \label{subsec:cleaning} We wish to use \cref{thm:stability} to show that the set $\widetilde{A'}\subseteq \mathbb{R}^r$ produced in the previous subsection must have many different subset sums. This theorem does not apply, however, if $\widetilde{A'}$ happens to be mostly contained in a proper subspace of $\RR^r$, so first we perform a ``cleaning'' step that identifies the dimension $1 \leq r' \leq r$ in which a large subset $\widetilde{A''}\subseteq \widetilde{A'}$ is robustly supported.  The following lemma encapsulates our cleaning procedure.

\begin{lemma}\label{lem:cleaning-2} 
Let $r$ be a positive integer, and let $0 < \delta,\varepsilon < 1$ be reals.  Let $Q$ be a proper symmetric GAP of rank $r$, and let $B \subseteq Q$ be a subset of size $|B| \geq \delta |Q|$. Then there is a subset $B' \subseteq B$ of size $|B'| \gg_{r,\varepsilon} |B|$ that is contained in a proper symmetric GAP $Q'$ of rank $r' \leq r$ and size $|Q'| \ll_{r,\delta,\varepsilon} |Q|$ with the property that each proper subspace of $\RR^{r'}$ contains at most $(1-\varepsilon)|B'|+1$ elements of $\widetilde{B'}$.
\end{lemma}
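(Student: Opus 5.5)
We argue by induction on the rank $r$, using an iterative descent. Note first that we may interpret the condition with the lift $\widetilde{B'} \subseteq \widetilde{Q'} \subseteq \mathbb{Z}^{r'}$ taken with respect to $Q'$. If $\widetilde{B}$ itself already has the property that every proper subspace of $\mathbb{R}^r$ contains at most $(1-\varepsilon)|B|+1$ of its points, we take $B'=B$ and $Q'=Q$ and are done. Otherwise some proper subspace $V \subsetneq \mathbb{R}^r$ contains more than $(1-\varepsilon)|B|$ points of $\widetilde{B}$. Enlarging $V$ if necessary, we may take $V$ to be a hyperplane, and in fact a rational hyperplane, because it is spanned by lattice points of $\widetilde{Q}$ (we can replace $V$ by the span of $V\cap \widetilde B$, which is rational, and extend it by rational vectors). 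Set $B_1:=\pi(\widetilde{B}\cap V)$, so that $|B_1| \geq (1-\varepsilon)|B| \gg_\varepsilon |B|$. The plan is to show that $B_1$ lies in a proper symmetric GAP $Q_1$ of rank at most $r-1$ and size $O_{r,\delta,\varepsilon}(|Q|)$, in such a way that $B_1$ is still $\Omega_{r,\delta,\varepsilon}(1)$-dense in $Q_1$. We then recurse with $(B_1,Q_1)$ in place of $(B,Q)$. Since the rank strictly drops at each step, at most $r$ iterations occur, and the constants compound only $r$ times, giving the claimed dependence. The recursion terminates either when the subspace condition holds or when the rank reaches $1$, where the only proper subspace is $\{0\}$. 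That subspace contains at most one point, which explains the ``$+1$'' in the statement.

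The key step, and the main obstacle, is realizing $\widetilde{Q}\cap V$ as a proper GAP of lower rank with controlled size. Here $\widetilde{Q}\cap V$ is the set of lattice points of the convex symmetric body $\widetilde{Q}_{\mathbb{R}}\cap V$ in the lattice $\mathbb{Z}^r\cap V$, which has rank $r-1$. By the standard result that lattice points of a symmetric convex body are covered by $O_r(1)$ translates of a proper GAP of comparable size (a Minkowski second theorem / John-type result; cf.~\cite[Lemma 3.33 and Theorem 3.36]{TV}), the set $\widetilde{Q}\cap V$ is contained in a proper symmetric GAP $\widetilde{P}$ in $\mathbb{Z}^r\cap V$ of rank $r-1$ with $|\widetilde{P}| \ll_r |\widetilde{Q}\cap V| \le |Q|$. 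Applying the linear map $\pi$ produces a symmetric GAP $P=\pi(\widetilde{P})$ of rank $r-1$ containing $B_1$. This $P$ need not be proper, because $\pi$ is injective on $\widetilde{Q}$ but not necessarily on the larger box $\widetilde{P}$. We therefore invoke \cite[Theorem 3.40]{TV} once more, exactly as in the footnote above, to place $P$ inside a proper symmetric GAP $Q_1$ of rank at most $r-1$ with $|Q_1| \ll_r |P| \ll_r |Q|$.

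Density is then automatic: $|B_1| \gg_\varepsilon |B| \geq \delta|Q| \gg_r \delta|Q_1|$. Hence the hypothesis of the lemma holds again with a new density parameter depending only on $r,\delta,\varepsilon$. Accumulating the losses over at most $r$ steps, the final set $B'$ satisfies $|B'| \ge (1-\varepsilon)^r|B| \gg_{r,\varepsilon}|B|$, and $|Q'| \ll_{r,\delta,\varepsilon}|Q|$. The one point I expect to require care is bookkeeping rather than ideas: the geometry-of-numbers covering must produce a single GAP, not a union of $O_r(1)$ translates. If only translates are available, I would pigeonhole one translate containing a $\gg_r 1$ fraction of $B_1$. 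That translate is not symmetric, so I would then symmetrize it at the cost of one extra generator, or absorb the translation into the ambient lattice. The rank could then fail to drop, in which case I would argue instead that it drops after passing to $\mathbb{Z}^r\cap V$ directly.
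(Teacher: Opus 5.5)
Your proposal is correct. Its outer skeleton matches the paper's proof: induct on the rank, pass to a hyperplane $V$ holding more than $(1-\varepsilon)|B|+1$ points of $\widetilde B$, cover $\pi(\widetilde Q\cap V)$ by a proper symmetric GAP of rank at most $r-1$, recurse, and lose a factor $1-\varepsilon$ per step.

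The difference is in the slicing step, which the paper isolates as Lemma~\ref{lem:cleaning-technical} and proves by hand. Write $V=\{v_1x_1+\cdots+v_rx_r=0\}$ and $J=\{j: v_j\neq 0\}$. The density bound $|\widetilde Q\cap V|\geq \eta|\widetilde Q|$, with $\eta=(1-\varepsilon)\delta$, forces $s_j=O_\eta(1)$ for each $j\in J$, because $V$ is a graph over the remaining coordinates. Hence $\widetilde Q\cap V=W\times\widetilde Q_{-J}$, where $W$ is one of $O_{r,\eta}(1)$ bounded sets, each lying in a sublattice of rank at most $|J|-1$. A basis of that sublattice, together with the untouched generators $t_i$ for $i\notin J$, gives the new GAP, which \cite[Theorem 3.40]{TV} makes proper. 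That route needs neither rationality of $V$ nor geometry of numbers, but it is where the $\delta,\varepsilon$-dependence of $|Q'|$ enters.

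You instead apply the discrete John theorem to the convex hull of $\widetilde Q$ intersected with $V$, viewed in the lattice $\mathbb{Z}^r\cap V$. This is why you first replace $V$ by a rational hyperplane, and that replacement is legitimate. It is exactly the alternative the paper mentions after Lemma~\ref{lem:cleaning-technical}. It imports a nontrivial tool, but it uses no density for the size bound and gives $|Q'|\ll_r|Q|$.

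Your closing worry about translates is unnecessary. The discrete John theorem \cite{TV08,vHK} directly supplies linearly independent $w_1,\dots,w_k\in\mathbb{Z}^r\cap V$ with $k\leq r-1$, and positive integers $N_i$, such that
$$\{\textstyle\sum_i x_iw_i : x_i\in\mathbb{Z},\ |x_i|\leq N_i\}\subseteq\widetilde Q\cap V\subseteq\{\textstyle\sum_i x_iw_i : x_i\in\mathbb{Z},\ |x_i|\leq C_rN_i\}.$$
The right-hand side is a single proper symmetric GAP of size $\ll_r|\widetilde Q\cap V|$. This matters, because your fallback of symmetrizing a translate with an extra generator would undo the rank drop that the induction relies on.
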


We will prove \cref{lem:cleaning-2} by iterating the following ``slicing'' lemma.

\begin{lemma}\label{lem:cleaning-technical}
Let $r$ be a positive integer, and let $0<\eta<1$ be a real.  Let $Q$ be a proper symmetric GAP of rank $r$, with associated lift $\widetilde{Q} \subseteq \mathbb{Z}^r$ and projection map $\pi$.  If $V \subseteq \mathbb{R}^r$ is a codimension-1 subspace with $|\widetilde{Q} \cap V| \geq \eta |\widetilde{Q}|$, then there is a  proper symmetric GAP $Q'$ of rank at most $r-1$ and size $|Q'| \ll_{r,\eta} |Q|$ that contains $\pi(\widetilde Q\cap V)$. 
\end{lemma}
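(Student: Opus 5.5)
The plan is to find a basis for the lattice $\Lambda:=V\cap\mathbb{Z}^r$ in which the slice $\widetilde{Q}\cap V$ is contained in a box that is not much larger than the slice itself, and then push that box forward by $\pi$.

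\emph{Step 1: $\Lambda$ is a full lattice in $V$.} Since $|\widetilde{Q}\cap V|\geq \eta|\widetilde{Q}|>0$ and $\widetilde{Q}$ is finite, the slice is nonempty. Our first task is to show that $\widetilde{Q}\cap V$ spans $V$. If it did not, the slice would lie in a subspace of dimension at most $r-2$. Then the count of lattice points of the box in such a subspace would be at most $O_r(|\widetilde{Q}|/\min_j(2s_j+1)^2)$. This is too small once the side lengths are large in terms of $\eta$. If some side lengths are bounded (say $<K_{r,\eta}$), we can simply drop those coordinates and absorb them by enlarging a rank. More cleanly, we can avoid this issue altogether by working with $\Lambda$ regardless of whether the slice spans it. In that case we pass to the span $W$ of the slice, with $\dim W\leq r-1$, and use the lattice $W\cap\mathbb{Z}^r$ of rank $\dim W$. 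So without loss of generality, let $\Lambda$ have rank $k\leq r-1$, spanning $W\supseteq\widetilde{Q}\cap V$.

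\emph{Step 2: geometry of numbers.} Let $K:=\mathrm{conv}(\widetilde{Q})\cap W$, a centrally symmetric convex body in $W$. We want a basis $w_1,\dots,w_k$ of $\Lambda$ and integers $N_i$ such that
\[
K\cap\Lambda\subseteq\Big\{\textstyle\sum x_iw_i:|x_i|\leq N_i\Big\},\qquad \prod_i(2N_i+1)\ll_r|K\cap\Lambda|.
\]
The standard route is as follows. Take successive minima $\lambda_1\le\dots\le\lambda_k$ of $K$ with respect to $\Lambda$, and fix a Mahler-type basis adapted to them; the change-of-basis costs only $O_r(1)$ factors. Minkowski's second theorem, together with the hypothesis that $K\cap\Lambda$ spans $W$ (so $\lambda_k\le1$), gives the bound $|K\cap\Lambda|\gg_r\prod_i\lambda_i^{-1}$. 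The coefficients of points of $K\cap\Lambda$ in the adapted basis are bounded by $O_r(\lambda_i^{-1})$. This is essentially \cite[Lemma 3.33/Theorem 3.36]{TV} (every convex progression is covered by a GAP of comparable size), and I would cite it there. So we set $N_i\asymp_r\lambda_i^{-1}$.

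\emph{Step 3: push forward and make proper.} Put $t_i':=\pi(w_i)\in\mathbb{R}$. Then $P:=t_1'[-N_1,N_1]+\dots+t_k'[-N_k,N_k]$ contains $\pi(\widetilde{Q}\cap V)$. Its size is at most $\prod(2N_i+1)\ll_r|\widetilde{Q}\cap V|\leq|\widetilde{Q}|=|Q|$. Note that $\eta$ enters only through the spanning step; with the reduction to $W$ in Step 1 it actually becomes unnecessary. Since $P$ is symmetric but possibly improper, we invoke \cite[Theorem 3.40]{TV}, as in the footnote above, to get a proper symmetric GAP $Q'\supseteq P$ of rank at most $k\leq r-1$ and size $\ll_r|P|$. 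If some $t_i'=0$, we drop that generator. If $k=0$, the slice is $\{0\}$, and $Q'=\{0\}$, or rank $1$ with $t=1$, $s=1$ if rank $0$ is disallowed.

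The main obstacle is Step 2. There we must cover the lattice points of a sliced box by a generalized progression of comparable size. This requires the discrete John-type theorem, i.e.\ Minkowski's second theorem combined with a well-chosen lattice basis. Some care is needed to make sure that the spanning hypothesis holds so that the lower bound on $|K\cap\Lambda|$ is valid, which is why we pass to $W$.
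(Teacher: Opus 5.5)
Your proof is correct, but it follows a different route from the paper's. It is essentially the alternative the authors mention right after their proof: apply the discrete John theorem to the convex slice $\mathrm{conv}(\widetilde{Q})\cap V$.

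The paper's argument is elementary and uses $\eta$ essentially. Write $V=\{v_1x_1+\cdots+v_rx_r=0\}$ and $J=\{j: v_j\neq 0\}$. For each $j\in J$ the hyperplane is a graph over the remaining coordinates, so $\eta|\widetilde{Q}|\le|\widetilde{Q}\cap V|\le|\widetilde{Q}|/(2s_j+1)$, which forces $s_j\ll_\eta 1$. Hence $\widetilde{Q}\cap V=W\times\widetilde{Q}_{-J}$, where $W\subseteq\prod_{j\in J}[-s_j,s_j]$ is one of only $O_{r,\eta}(1)$ possible finite sets. A fixed lattice basis of a rank-$(\le |J|-1)$ sublattice containing $W$ then covers $W$ with coefficients bounded by a constant depending only on $r,\eta$, simply because there are finitely many configurations. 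The paper takes $Q'$ to be generated by the images of these basis vectors together with the original generators $t_i$ for $i\notin J$, and makes it proper via \cite[Theorem 3.40]{TV}.

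That approach needs no Minkowski's second theorem or Mahler-type bases, and it keeps explicit structure: the unconstrained coordinates survive unchanged. The cost is the dependence on $\eta$ and the need for the density hypothesis at all. Your approach imports the nontrivial geometry-of-numbers input, but in exchange proves a stronger statement. No density hypothesis is needed, and the bound is $|Q'|\ll_r|\widetilde{Q}\cap V|$ rather than $\ll_{r,\eta}|Q|$, which is exactly the strengthening the authors remark on.

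Passing to $W=\mathrm{span}(\widetilde{Q}\cap V)$ is the right way to guarantee $\lambda_k\le 1$. You need this both for the lower bound $|K\cap\Lambda|\gg_r\prod_i\lambda_i^{-1}$ and to absorb the $+1$ in $2N_i+1$. The preliminary discussion in your Step 1 about bounded side lengths can simply be deleted.
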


\begin{proof}
Let us write $$\widetilde{Q} = \{(x_1,\dots,x_r)\in\mathbb{Z}^r : -s_j\leq x_j \leq s_j\} \quad \text{and} \quad \pi(x_1, \ldots, x_r)=t_1x_1+\cdots+t_rx_r,$$ 
where $s_1, \ldots, s_r$ are positive integers and $t_1, \ldots, t_r$ are nonzero reals.  Also write $$V = \{v_1 x_1 + \cdots + v_r x_r = 0\},$$
where $(v_1, \ldots, v_r)\in\mathbb{R}^r\setminus\{0\}$.

Consider the set of indices $J := \{j\in[r] : v_j \neq 0\}$. Let us extend $\pi$ linearly to all of $\mathbb R^r$ and write $\pi\vert_{\mathbb Z^J}$ for the restriction of $\pi$ to $\mathbb Z^J$. For each $j \in J$, the hyperplane $V$ is the graph of a function on the coordinates $[r] \setminus\{j\}$, so
$$|\widetilde{Q} \cap V| \leq \frac{|\widetilde{Q}|}{2s_j+1} \leq \frac{\eta^{-1}|\widetilde{Q}\cap V|}{2s_j+1}$$
and $s_j \ll_\eta 1$.  Write $\widetilde{Q} = \widetilde{Q}_J \times \widetilde{Q}_{-J}$, where $\widetilde{Q}_J$ and $\widetilde{Q}_{-J}$ correspond to the coordinates of $J$ and of $[r] \setminus J$, respectively.  Since $V$ constrains only the $J$-coordinates, we have $V \cap \widetilde{Q} = W \times \widetilde{Q}_{-J}$, where
$$W:= \left\{(x_j)_{j\in J}\in \mathbb{Z}^J : \sum_{j\in J} v_j x_j = 0, \, -s_j\leq x_j\leq s_j\right\}.$$
Moreover, since $s_j = O_\eta(1)$ for all $j\in J$ and there are only $O_r(1)$ possibilities for $J$, there are only $O_{r,\eta}(1)$ possibilities for $W$.

Consider a single such $W$.  By construction, $W$ is contained in a sublattice of $\ZZ^J$ of rank $r_W \leq |J|-1$.  Fix a $\ZZ$-basis $\{b_1, \ldots, b_{r_W}\}$ for this sublattice.  Then there is a positive integer $s_W=O_{r,\eta}(1)$ such that
$$W \subseteq \left\{\sum_{j=1}^{r_W} b_j x_j
  : ~-s_W \leq x_j \leq s_W, ~x_j \in \ZZ\right\}.$$
It follows that $\pi(\widetilde Q\cap V)=\pi(W \times \widetilde{Q}_{-J})$ is contained in the symmetric GAP
$$Q':=\left\{\sum_{j=1}^{r_W} \pi\vert_{\mathbb Z^J}(b_j) x_j +\sum_{i \in [r] \setminus J} t_i y_i  : ~-s_W \leq x_j \leq s_W, ~-s_i \leq y_i \leq s_i, ~x_j,y_i \in \ZZ \right\}$$
of rank $r_W+(r-|J|) \leq r-1$ and size $$|Q'| \leq (2s_W+1)^{r_W} \cdot \prod_{j \notin J}(2s_j+1) \ll_{r,\eta} |Q|.$$ \cite[Theorem 3.40]{TV} lets us make $Q'$ proper at the cost of increasing its size by at most $O_r(1)$.
\end{proof}

Our elementary proof of \cref{lem:cleaning-technical} was based on the observation that the hypothesis $|\widetilde{Q} \cap V| \geq \eta|\widetilde{Q}|$
severely constrains the geometry of $\widetilde{Q}\cap V$.  One could alternatively use the discrete John theorem from lattice reduction theory (see \cite{TV08,vHK}) and the fact that the intersection of $V$ with the convex hull of $\widetilde{Q}$ is convex.  Bringing in this additional (nontrivial) tool would let us drop the assumption $|\widetilde{Q} \cap V| \geq \eta|\widetilde{Q}|$.

\begin{proof}[Proof of \cref{lem:cleaning-2}]
We induct on the rank $r$.  The base case $r=1$ is trivial.  For the induction step, let $Q,B$ be as in the lemma statement for some $r \geq 2$.  If each codimension-1 subspace of $\RR^{r}$ contains at most $(1-\varepsilon)|B|+1$ elements of $\widetilde{B}$, then we obtain the desired conclusion with $B':=B$ and $Q':=Q$.  Suppose instead that there is a codimension-1 subspace $V \subseteq \mathbb{R}^r$ that contains at least $(1-\varepsilon)|B|+1$ elements of $\widetilde{B}$.  In particular, $V$ contains at least $(1-\varepsilon)\delta|Q|$ elements of $\widetilde{Q}$.  \cref{lem:cleaning-technical} with $\eta:=(1-\varepsilon)\delta$ provides a proper symmetric GAP $Q_1$ of rank $r' \leq r-1$ and size $|Q_1| \ll_{r,\delta, \varepsilon}|Q|$ that contains $\pi(\widetilde Q\cap V)$.  \emph{A fortiori} $\pi(\widetilde Q\cap V)$ contains $B_1 := \pi(\widetilde B \cap V)$, which has size 
$$|B_1| \geq (1-\varepsilon)|B| \geq (1-\varepsilon)\delta |Q| \gg_{r, \delta, \varepsilon} |\widetilde{Q}_1|.$$  
Applying the induction hypothesis to $B_1 \subseteq Q_1$ produces the desired subset $B' \subseteq B_1 \subseteq B$  and proper symmetric GAP $Q'\supseteq B'$, which have sizes 
$$|B'| \gg_{r,\varepsilon} |B_1| \gg_{r,\varepsilon} |B| \quad \text{and} \quad
|Q'| \ll_{r,\delta,\varepsilon} |Q_1| \ll_{r,\delta,\varepsilon} |Q|$$
and also satisfy the final property of the lemma statement.
\end{proof}

We apply \cref{lem:cleaning-2} to the set $A'$ from the previous subsection with $\varepsilon:=0.51$ and a suitable choice of $\delta=\delta(C)>0$.  This provides some further subset $A'' \subseteq A'$ of size $|A''| \gg_C n$ that is contained in another proper symmetric GAP $Q'$, of rank $r'=O_C(1)$ and size $O_C(n)$, with the additional property that each proper subspace of $\mathbb{R}^{r'}$ contains at most half of the elements of $\widetilde{A''}$.

\subsection{Reducing to rank $1$}\label{subsec:reducing}

So far we have worked with proper GAPs.  As outlined in the proof overview, our next task is to study the conditions under which the projection map of a non-proper GAP may have very large fibers.  Such a result will be useful for leveraging the fact that the cleaned set $A''$ output by \cref{lem:cleaning-2} has few subset sums  (even though the projection map $\pi$ is bijective on $\widetilde{Q'}$, it is not necessarily bijective on the enlarged box in which $\FS(\widetilde{A''})$ is naturally contained).  The analysis of this subsection is phrased in terms of projection maps on boxes in $\ZZ^r$.

\begin{proposition}\label{prop:proj'}
Fix a positive integer $r$ and constants $\varepsilon,\gamma,C>0$. Let $n$ be a sufficiently large positive integer.  Let $I_1,\dots,I_r\subseteq \mathbb{Z}$ be
finite intervals such that $|I_j|\geq\varepsilon n$ for all $1\leq j\leq r$ and 
\[
\gamma n^{r+1}\le |I_1|\cdots |I_r|\le \varepsilon^{-1}n^{r+1}.
\]
Let $t_1,\dots,t_r$ be nonzero reals, and define the projection map $\pi:\mathbb Z^r\to\mathbb{R}$ by
\[
\pi(x_1,\dots,x_r):=t_1x_1+\cdots+t_rx_r.
\]
Suppose $X\subseteq I_1\times\cdots\times I_r$ satisfies $|X|\ge \gamma n^{r+1}$ and
$|\pi(X)|\le Cn^2$. Then there is a positive real number $a$ such that
$t_1,\dots,t_r\in a\cdot \mathbb{Z}$ and
\[
|a^{-1}t_j|\cdot|I_j|\ll_{r,\varepsilon,\gamma,C} n^2
\quad \text{for all }1\le j\le r.
\]
\end{proposition}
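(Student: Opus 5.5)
The plan is to study the kernel lattice $\Lambda:=\ker\pi\cap\mathbb Z^r$ of rank $k$, show first that $k=r-1$, and then bound the individual $|m_j|\,|I_j|$. Because $\mathbb Z^r/\Lambda$ is torsion-free ($\Lambda$ is saturated), $\pi$ factors as $\mathbb Z^r\to\mathbb Z^r/\Lambda\cong\mathbb Z^{r-k}$ followed by a map $\mathbb Z^{r-k}\to\mathbb R$ that is injective. Hence
$$|\pi(X)|=|\rho(X)|,$$
where $\rho:\mathbb Z^r\to\mathbb Z^{r-k}$ is the quotient map. The conclusion of the proposition is precisely that $k=r-1$: then $t_j=a m_j$ with $\gcd(m_1,\dots,m_r)=1$, and $\pi(x)=a\,(m\cdot x)$, after which we must bound $|m_j|\,|I_j|$.

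\textbf{Step 1 (fiber sizes).} From $|X|\ge\gamma n^{r+1}$ and $|\pi(X)|\le Cn^2$, some fiber satisfies
$$|X\cap(x+\Lambda)|\ge (\gamma/C)\,n^{r-1}.$$
A coset $x+\Lambda$ meets the box $B:=I_1\times\cdots\times I_r$ inside the $k$-dimensional affine section $(x+\Lambda_{\mathbb R})\cap B$. By the side-length constraints, each side satisfies
$$|I_j|\le \varepsilon^{-1}n^{r+1}/(\varepsilon n)^{r-1}\ll n^2,$$
and any $k$ sides have product $\ll n^{k+1}$.

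\textbf{Step 2 (ruling out $k\le r-2$; the main obstacle).} I expect this to be the hardest step. A crude bound on lattice points in a $k$-dimensional section gives about $n^{k+1}$, which only matches $n^{r-1}$ when $k=r-2$, so pure fiber counting does not finish. Instead I would count the image directly. Since $X$ occupies a $\gg\gamma\varepsilon^r$ fraction of $B$, a positive proportion of the cosets of $\Lambda$ that meet $B$ must contain many points of $X$.

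I would then lower-bound the number of cosets meeting $B$, namely $|\rho(B)|$, via lattice-point counting. The tools are Davenport's lemma, or Minkowski's successive minima for $\Lambda$ together with the dual lattice $\Lambda^\perp\cap\mathbb Z^r$ of rank $r-k\ge2$. The aim is a bound of the form
$$|\rho(B)|\gg \frac{\vol(B)}{\max_x \vol_k\bigl((x+\Lambda_{\mathbb R})\cap B\bigr)/\covol(\Lambda)}\gg n^{3}.$$
Concretely, the plan is to pick two independent integer vectors $u,w\in\Lambda^\perp$ of bounded height relative to $B$, and to show that the pair $(u\cdot x,\,w\cdot x)$ already takes $\gg n^3$ values on $X$. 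The heuristic is that $\pi(X)$ behaves like a genuinely $2$-dimensional set: a $\ge$rank-$2$ projection of a box whose sides all exceed $\varepsilon n$ has image of size at least $n\cdot n^2$. The delicate point is handling lattices $\Lambda$ with very skewed successive minima, where the section volume estimate fails and error terms must be controlled. This would contradict $|\rho(X)|\le Cn^2$, forcing $k=r-1$.

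\textbf{Step 3 (bounding $|m_j|\,|I_j|$).} Now $\pi(X)=a\cdot\{m\cdot x:x\in X\}$. Suppose some $|m_j|\,|I_j|\ge Kn^2$ with $K$ large. By density, pigeonholing over the coordinates $i\ne j$ gives $\gg n^{r-1}$ lines in direction $e_j$, each containing $\gg|I_j|$ points of $X$. On each line, $m\cdot x$ ranges over $\gg|I_j|$ distinct values in a single residue class mod $m_j$, spread over a range of length $\gg|m_j|\,|I_j|$.

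If many distinct residues mod $m_j$ occur among these lines, the values are disjoint and we get $\gg\min(|m_j|,n^{r-1})\cdot|I_j|$ values, which is $\gg Kn^2$ provided $|m_j|\lesssim n^{r-1}$. If few residues occur, then the remaining coefficients $m_i$ ($i\ne j$) must be highly degenerate mod $m_j$. In that case I would apply a secondary argument: pass to a second coordinate $i$ and run a two-dimensional analysis of $m_jx_j+m_ix_i$ on a dense $2$-dimensional slice. A dense subset of an $|I_j|\times|I_i|$ grid under a map of this form has image of size at least $\gg\min(|I_j||I_i|,\,\max(|m_j||I_j|,|m_i||I_i|))$ up to constants. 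Combining these cases yields $|\pi(X)|\gg Kn^2$, contradicting $|\pi(X)|\le Cn^2$ once $K$ is large in terms of $r,\varepsilon,\gamma,C$.
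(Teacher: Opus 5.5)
Your proposal has a genuine gap, and it starts with the claim in Step 2 that fiber counting cannot exclude $k=r-2$. It can. Let $j_1$ index the longest side, so that $|I_{j_1}|\ge(|I_1|\cdots|I_r|)^{1/r}\ge\gamma^{1/r}n^{1+1/r}$. The big fiber (at least $\gamma C^{-1}n^{r-1}$ points of the box) lies in the hyperplane $\{t\cdot x=u\}$. Because $t_{j_1}\neq 0$, this hyperplane is a graph over the coordinates $[r]\setminus\{j_1\}$. If the fiber's affine span has dimension at most $r-2$, its image in those coordinates lies in an affine hyperplane of $\RR^{r-1}$. Hence each fiber point is determined by its coordinates outside $\{j_1,j_2\}$ for some $j_2$, and the fiber has at most $|I_1|\cdots|I_r|/(|I_{j_1}|\cdot\min_j|I_j|)\ll n^{r-1-1/r}$ points, a contradiction for large $n$. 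If instead the fiber has affine dimension $r-1$, its difference vectors span $t^\perp$, so $t$ is a multiple of a primitive integer vector. Your bound ``any $k$ sides have product $\ll n^{k+1}$'' throws away exactly this $n^{1/r}$ saving, because it does not insist on eliminating the longest coordinate. The substitute you propose is not carried out; you yourself leave skewed successive minima unresolved. Its stated target is also false. Take $r=3$, $|I_1|=|I_2|\approx n^{3/2}$, $|I_3|=n$, $t=(1,1,\sqrt2)$. The kernel lattice is $\ZZ(1,-1,0)$, of rank $r-2$, yet $|\rho(B)|=|\pi(B)|\asymp n^{5/2}$, not $\gg n^3$. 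Your heuristic about rank-$2$ projections of boxes fails on the same example.

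Step 3 does not close either. The ``many residues'' versus ``few residues'' split is never quantified. The two-dimensional estimate you invoke for the degenerate case is false as stated, since it ignores $\gcd(m_i,m_j)$: for $m=(M,M,1)$, the map $(x_1,x_2)\mapsto M(x_1+x_2)$ takes only $|I_1|+|I_2|-1$ values, far below $\min(|I_1||I_2|,M\max(|I_1|,|I_2|))$ when $M$ is large. More fundamentally, counting residues of $\sum_{i\neq j}m_ix_i$ modulo $m_j$ over the box is itself an $(r-1)$-dimensional lattice-point problem of the original type, so the hard case is only deferred.

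The missing tool is one volume-based count on the big fiber. Once the fiber has affine dimension $r-1$, it equals $\Lambda\cap D_0$. Here $\Lambda=\lambda^\perp\cap\ZZ^r$ has covolume $\|\lambda\|_2$ by primitivity, and $D_0$ is the slice of the continuous box $\prod_j(I_j+(-\tfrac12,\tfrac12))$ by $\lambda^\perp$.
\begin{itemize}
\item Triangulating the convex hull of the fiber into lattice simplices of volume at least $\|\lambda\|_2/(r-1)!$ gives $|\Lambda\cap D_0|\ll_r\vol_{r-1}(D_0)/\|\lambda\|_2+1$.
\item Projecting out coordinate $j$, with Jacobian $|\lambda_j|/\|\lambda\|_2$, gives $\vol_{r-1}(D_0)\le(\|\lambda\|_2/|\lambda_j|)\prod_{i\neq j}|I_i|$ for every $j$.
\end{itemize}
So the fiber has $\ll_r\prod_i|I_i|/\max_j(|\lambda_j||I_j|)$ points. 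Comparing with the lower bound $\gamma C^{-1}n^{r-1}$ and with $\prod_i|I_i|\le\varepsilon^{-1}n^{r+1}$ yields $\max_j|\lambda_j||I_j|\ll n^2$ directly, with no case analysis on residues.
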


Since the proof of \cref{prop:proj'} is somewhat technical, we first describe the simplified argument in the special case of rank $r=2$.  The assumptions $|I_1 \times I_2| \geq |X| \geq \gamma n^3$ and $|\pi(X)| \leq Cn^2$ together imply that $$\gamma C^{-1}n \leq \max_{u \in \RR}|\pi^{-1}(u)\cap (I_1\times I_2)|=\max_{u \in \RR} |\{(x_1,x_2) \in I_1 \times I_2: \, t_1x_1+t_2x_2=u\}|.$$
The right-hand side is at most $1$ if $t_1/t_2$ is irrational, so we must have $(t_1,t_2)=a(\lambda_1,\lambda_2)$ for some coprime nonzero integers $\lambda_1,\lambda_2$.  Fix some $u \in \mathbb{R}$ achieving the maximum.  All pairs $(x_1,x_2)$ with $t_1x_1+t_2x_2=u$ differ from one another by integer multiples of $(\lambda_2, -\lambda_1)$.  In particular, since the $x_1$'s all lie in the interval $I_1$, the number of pairs $(x_1,x_2)$ is at most $1+|I_1|/|\lambda_2|$; likewise, the number of such pairs is at most $1+|I_2|/|\lambda_1|$.  It follows that
$$n \ll_{\gamma,C} \gamma C^{-1}n-1 \leq \min\left\{\frac{|I_1|}{|\lambda_2|}, \frac{|I_2|}{|\lambda_1|}\right\}=\frac{|I_1| \cdot |I_2|}{\max\{|\lambda_1| \cdot |I_1|,|\lambda_2| \cdot |I_2|\}} \leq \frac{\varepsilon^{-1}n^3}{\max\{|\lambda_1| \cdot |I_1|,|\lambda_2| \cdot |I_2|\}}.$$
Rearranging and recalling that $(t_1,t_2)=a(\lambda_1,\lambda_2)$ gives the desired inequality.

When the rank is larger than $2$, it becomes more complicated both to deduce that $(t_1, \ldots, t_r)$ is a scalar multiple of an integer vector $(\lambda_1, \ldots, \lambda_r)$ and subsequently to obtain the $\max_j \{|\lambda_j| \cdot |I_j|\}$ savings in the fiber bound.  The following lemma achieves both goals.


\begin{lemma}\label{lem:slice} 
Let $r$ be a positive integer.  Let
$\lambda=(\lambda_1,\dots,\lambda_r)\in (\mathbb{R}\setminus\{0\})^r$,
and let $I_1,\dots,I_r\subset \mathbb{Z}$ be nonempty finite intervals.  Define $\pi: \mathbb Z^r \to \RR$ by $\pi(x_1, \ldots, x_r):=\lambda_1 x_1+\cdots+\lambda_r x_r$.   Let $s \in \mathbb{R}$ be any real number.  Then the following holds.
\begin{enumerate}
    \item If $\pi^{-1}(s)\cap(I_1\times\cdots\times I_r)$ has affine dimension at most $r-2$, then $$|\pi^{-1}(s)\cap(I_1\times\cdots\times I_r)| \leq \frac{|I_1| \cdots |I_r|}{\max_j |I_j| \cdot \min_j |I_j|}.$$
    \item If $\lambda_1, \ldots, \lambda_r$ are integers with $\gcd(\lambda_1, \ldots, \lambda_r)=1$ and $\pi^{-1}(s)\cap(I_1\times\cdots\times I_r)$ has affine dimension $r-1$, then $$|\pi^{-1}(s)\cap(I_1\times\cdots\times I_r)| \ll_r \frac{|I_1| \cdots |I_r|}{\max_j (|\lambda_j| \cdot |I_j|)}.$$
\end{enumerate}
\end{lemma}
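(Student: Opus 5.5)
Write $F:=\pi^{-1}(s)\cap(I_1\times\cdots\times I_r)$ and $N:=|I_1|\cdots|I_r|$. The plan is to bound $|F|$ by counting lattice points of $F$ along fibers of a suitable coordinate projection. The affine hull of $F$ is an affine subspace $L$ contained in the hyperplane $\{\lambda\cdot x=s\}$. For part (1), $\dim L\le r-2$. For part (2), the integrality and coprimality of $\lambda$ make $\{\lambda\cdot x=s\}\cap\mathbb{Z}^r$ a translate of the rank-$(r-1)$ lattice $\Lambda:=\lambda^\perp\cap\mathbb{Z}^r$.

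\emph{Part (1).} Let $p$ and $q$ be indices with $|I_p|=\max_j|I_j|$ and $|I_q|=\min_j|I_j|$, and first suppose $p\neq q$. Since $\dim L\le r-2$, I will find two coordinates $x_p,x_q$ such that the coordinate projection $\mathbb{R}^r\to\mathbb{R}^{[r]\setminus\{p,q\}}$ is injective on $L$. This holds as long as the direction space of $L$ meets the $2$-dimensional coordinate plane $\mathrm{span}(e_p,e_q)$ only in $0$. If it does, then the points of $F$ are determined by their coordinates outside $\{p,q\}$, so $|F|\le N/(|I_p||I_q|)$, which is the bound in (1).

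The main obstacle is that the direction space $D$ of $L$ might meet $\mathrm{span}(e_p,e_q)$ nontrivially. I would handle this as follows. $D\subseteq\lambda^\perp$, and since all $\lambda_j\neq0$, $\lambda^\perp\cap\mathrm{span}(e_p,e_q)$ is the single line spanned by $w:=\lambda_qe_p-\lambda_pe_q$. So if $w\notin D$, we are done by injectivity. If $w\in D$, I would instead eliminate one coordinate using the hyperplane equation and another using the fact that $\dim L\le r-2$. Concretely, $L$ lies in a codimension-$2$ affine subspace cut out by $\lambda\cdot x=s$ and a second independent equation $\mu\cdot x=c$. I would like to choose $\mu$ so that the $2\times2$ minor of $(\lambda,\mu)$ on $\{p,q\}$ is nonzero, which makes $(x_p,x_q)$ a function of the remaining coordinates. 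When $\operatorname{codim}L\ge2$, I can choose any $\mu$ in the annihilator of $D$ independent of $\lambda$. The minor vanishes on $\{p,q\}$ for all such choices exactly when $\mathrm{span}(e_p,e_q)$ meets $D$, so this needs care. If it fails, I would fall back to a different pair: keep $p$ and pick another $q'$, losing only the ratio $|I_{q'}|/|I_q|$. That loss makes the bound weaker, so I would look for a cleaner route. For example, pass to the subsets $F'$ of $F$ on which the remaining coordinates are fixed: on such a fiber, $\lambda_px_p+\lambda_qx_q$ is constant, so it has at most $\min(|I_p|,|I_q|)$ points. Combining this with $\dim L\le r-2$ to save a further factor is what I expect to be the delicate step.

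If $p=q$, all intervals have equal length $m$. Then the claimed bound is $N/m^2$, and any two coordinates $p,q$ such that $D$ avoids $\mathrm{span}(e_p,e_q)$ suffice. Such a pair exists by a dimension count, since $D$ has dimension at most $r-2$ and lies in $\lambda^\perp$.

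\emph{Part (2).} Fix $j^*$ maximizing $|\lambda_j||I_j|$. Project $F$ onto the coordinates $[r]\setminus\{j^*\}$. This projection is injective, because $x_{j^*}=(s-\sum_{j\neq j^*}\lambda_jx_j)/\lambda_{j^*}$. Its image consists of integer points of $\prod_{j\neq j^*}I_j$ satisfying the congruence $\sum_{j\neq j^*}\lambda_jx_j\equiv s\pmod{\lambda_{j^*}}$, with the additional constraint that the resulting $x_{j^*}$ lies in $I_{j^*}$. The latter says that $\sum_{j\neq j^*}\lambda_jx_j$ lies in an interval of length $|\lambda_{j^*}||I_{j^*}|$. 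I would then count by splitting on one more coordinate $k\neq j^*$ with $\gcd(\lambda_k,\lambda_{j^*})$ controlled. Since $\gcd(\lambda)=1$ overall, this suggests an induction on $r$ using the gcd of the other coefficients. The cleanest route is probably the lattice-counting estimate $|F|\ll_r \mathrm{vol}_{r-1}(\text{slice})/\covol(\Lambda)+(\text{lower-order boundary terms})$. Here $\covol(\Lambda)=|\lambda|_2$, and the $(r-1)$-volume of the slice of the box is at most $N|\lambda|_2/\max_j(|\lambda_j||I_j|)$, up to constants. The boundary terms are handled by the affine-dimension-$(r-1)$ hypothesis via a Davenport-type lemma, bounding them by the same quantity.
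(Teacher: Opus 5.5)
Your part (2) ends up on the paper's route. Part (1), however, is left unfinished, and the obstacle you stop at is not real. In the case $w=\lambda_qe_p-\lambda_pe_q\in D$, you reject the fallback of keeping $p$ and replacing $q$ by another index $q'$, on the grounds that it ``loses'' a factor $|I_{q'}|/|I_q|$. The inequality runs the other way. Since $|I_{q'}|\ge\min_j|I_j|=|I_q|$, we get $N/(|I_p|\,|I_{q'}|)\le N/(\max_j|I_j|\cdot\min_j|I_j|)$, so you can only gain.

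You therefore never need $q$ to be the minimizer. You only need \emph{some} $q'\neq p$ for which forgetting $x_p,x_{q'}$ is injective on $L$, and your own observation supplies one:
\begin{itemize}
\item For each $q'\neq p$, the line $\lambda^\perp\cap\mathrm{span}(e_p,e_{q'})$ is spanned by $w_{q'}:=\lambda_{q'}e_p-\lambda_pe_{q'}$.
\item Since $\lambda_p\neq0$, these $r-1$ vectors are linearly independent, hence form a basis of $\lambda^\perp$.
\item As $\dim D\le r-2$, some $w_{q'}\notin D$. Because $D\subseteq\lambda^\perp$, this forces $D\cap\mathrm{span}(e_p,e_{q'})=\{0\}$.
\item This gives injectivity and hence $|F|\le N/(|I_p|\,|I_{q'}|)$.
\end{itemize}
This is exactly the paper's proof: fix $j_1$ maximizing $|I_j|$, obtain some $j_2$ by Gaussian elimination, and use $|I_{j_2}|\ge\min_j|I_j|$. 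The codimension-two minor analysis, the fiber counting, and the separate case $p=q$ are all unnecessary.

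In part (2), your final ``cleanest route'' is the paper's argument, and the congruence and gcd-induction ideas are not needed. The lattice $\Lambda=\lambda^\perp\cap\mathbb{Z}^r$ has covolume $\|\lambda\|_2$. One bounds $|F|\ll_r \vol_{r-1}(D_0)/\|\lambda\|_2$, where $D_0$ is the slice by $\{\lambda\cdot x=s\}$ of the continuous box with side lengths $|I_j|$. The coordinate projection dropping $x_{j^*}$ scales $(r-1)$-volume on $\lambda^\perp$ by exactly $|\lambda_{j^*}|/\|\lambda\|_2$. This gives $\vol_{r-1}(D_0)\le \|\lambda\|_2\, N/\max_j(|\lambda_j||I_j|)$ with no constants lost.

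The one soft spot is your lattice-point count. A Davenport-type bound for a non-coordinate lattice needs a reduced basis to control the projection terms, which is more work than necessary. It is simpler to triangulate $\mathrm{conv}(F)$ into at least $|F|-(r-1)$ simplices with vertices in the coset $x_0+\Lambda$ containing $F$. Each simplex has volume at least $\|\lambda\|_2/(r-1)!$, so $|F|\le (r-1)!\,\vol_{r-1}(D_0)/\|\lambda\|_2+(r-1)$. The additive term is absorbed because the full-affine-dimension hypothesis already forces $\vol_{r-1}(D_0)\ge\|\lambda\|_2/(r-1)!$; this is where that hypothesis enters. The paper cites this as a standard geometry-of-numbers fact and gives exactly this triangulation proof in a footnote.
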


\begin{proof}
We begin with part (1).  Note that $r \geq 2$.  Let $j_1 \in [r]$ be an index with $|I_{j_1}|=\max_j |I_j|$.  The assumption $\lambda_{j_1} \neq 0$ guarantees (e.g., by Gaussian elimination) that there is some $j_2 \in [r] \setminus \{j_1\}$ such that each point of $\pi^{-1}(s)$ is determined by its coordinates on the indices in $[r] \setminus \{j_1,j_2\}$.  Thus
$$|\pi^{-1}(s)\cap(I_1\times\cdots\times I_r)| \leq \prod_{j \in [r] \setminus\{j_1,j_2\}} |I_j|=\frac{|I_1| \cdots |I_r|}{|I_{j_1}| \cdot |I_{j_2}|} \leq \frac{|I_1| \cdots |I_r|}{\max_j |I_j| \cdot \min_j |I_j|}.$$

We now turn to part (2).  Note that $s \in \mathbb{Z}$ since otherwise $\pi^{-1}(s)\cap(I_1\times\cdots\times I_r)$ would be empty.  By possibly translating the intervals $I_j$ (recall that $\gcd(\lambda_1, \ldots, \lambda_r)=1$), we can reduce to the case where $s=0$ and the origin is contained in the box $I_1 \times \cdots \times I_r$.  Consider the $(r-1)$-dimensional subspace $V \subseteq \RR^r$ given by
$$V:=\{(x_1, \ldots, x_r) \in \RR^r: \lambda_1x_1+\cdots+\lambda_rx_r=0\}.$$
The lattice
$$\Lambda:=V \cap \ZZ^r \subseteq V$$
has rank $r-1$ and covolume $\|\lambda\|_2$ (due to the primitivity of $\lambda$).
  
We also need the continuous box
$$D:=(I_1 \times \cdots \times I_r)+(-1/2,1/2)^r \subseteq \RR^r,$$
which has side lengths $|I_1|, \ldots, |I_r|$, and the slice
$$D_0:=D \cap V,$$
which is a bounded convex set of dimension $r-1$.  Notice that $\pi^{-1}(0)\cap(I_1\times\cdots\times I_r)=\Lambda \cap D_0$.  It is a standard fact from the geometry of numbers (see, e.g., \cite[Lemma 3.26]{TV}) that\footnote{For the reader's convenience, we also include the following elementary proof, which we obtained from ChatGPT~5.4 Pro before learning about the reference \cite[Lemma 3.26]{TV}: Let $K$ denote the convex hull of $\pi^{-1}(0)$. Then $K$ is a polytope with all of its vertices in $\Lambda$. The last assumption of the lemma guarantees that $K$ has dimension $r-1$, so we can decompose $K$ into at least $|\pi^{-1}(0)\cap(I_1\times\cdots\times I_r)|-(r-1)$ simplices with vertices in $\Lambda$ and with pairwise disjoint interiors.  Each simplex necessarily has volume at least $\covol_{r-1}(\Lambda)/(r-1)!=\|\lambda\|_2/(r-1)!$.
Since $K$ is contained in $D_0$, a comparison of volumes gives
\begin{equation*}
|\pi^{-1}(0)\cap(I_1\times\cdots\times I_r)| \leq \frac{(r-1)! \cdot \vol_{r-1}(D_0)}{\|\lambda\|_2}+(r-1) \ll_r \frac{\vol_{r-1}(D_0)}{\|\lambda\|_2}+1,
\end{equation*} which for our later application is just as good as \eqref{eq:vol-comparison}.}
\begin{equation}\label{eq:vol-comparison}
|\pi^{-1}(0)\cap(I_1\times\cdots\times I_r)|\ll_r \frac{\vol_{r-1}(D_0)}{\covol(\Lambda)}=\frac{\vol_{r-1}(D_0)}{\|\lambda\|_2}.
\end{equation}

We next fix an index $j_1 \in [r]$ with $|\lambda_{j_1}| \cdot |I_{j_1}|=\max_j (|\lambda_{j}| \cdot |I_{j}|)$ and consider the projection $\pi': \RR^r \to \RR^{r-1}$ that removes the $j_1$-th coordinate.  Parameterizing the points of $V$ by
$$x_{j_1}=-\lambda_{j_1}^{-1}\sum_{j\neq j_1} \lambda_j x_j$$
and computing the corresponding Jacobian, we find that the restriction of $\pi'$ to $V$ dilates volumes by a factor of $|\lambda_{j_1}|/\|\lambda\|_2$.  Since $D_0 \subseteq D$, we have
\begin{align*}
\vol_{r-1}(D_0)&=\frac{\|\lambda\|_2}{|\lambda_{j_1}|} \cdot \vol_{r-1}(\pi'(D_0))\\
 &\leq \frac{\|\lambda\|_2}{|\lambda_{j_1}|} \cdot \vol_{r-1}(\pi'(D))\\
 &=\frac{\|\lambda\|_2}{|\lambda_{j_1}|} \cdot \prod_{j \neq j_1} |I_j|\\
 &=\|\lambda\|_2 \cdot \frac{|I_1| \cdots |I_r|}{|\lambda_{j_1}| \cdot |I_{j_1}|}\\
 &=\|\lambda\|_2 \cdot \frac{|I_1| \cdots |I_r|}{\max_j (|\lambda_{j}| \cdot |I_{j}|)}.
\end{align*}
Plugging this bound into \eqref{eq:vol-comparison} gives the desired conclusion.
\end{proof}

\begin{proof}[Proof of \cref{prop:proj'}]
The case $r=1$ is immediate, since for $a:=|t_1|$ we have
\[
|a^{-1}t_1|\,|I_1|=|I_1|\le \varepsilon^{-1}n^2.
\]
We henceforth restrict our attention to the case $r\ge 2$.  We may assume that $n$ is sufficiently large relative to $\varepsilon,\gamma,C$.  To set the stage for applying \cref{lem:slice}, we observe that
\begin{equation}\label{eq:prop43-fiber-lower}
\max_{u \in \RR}|\pi^{-1}(u)\cap (I_1\times\cdots\times I_r)|\ge\max_{u \in \RR}|\pi^{-1}(u)\cap X|\ge \frac{|X|}{|\pi(X)|}\ge \gamma C^{-1}n^{r-1};
\end{equation}
fix some $u$ achieving this maximum.  Notice also that
$$\max_j |I_j| \geq (|I_1| \cdots |I_r|)^{1/r} \geq \gamma^{1/r} n^{1+1/r}.$$
If $\pi^{-1}(u)\cap(I_1\times\cdots\times I_r)$ has affine dimension at most $r-2$, then part (1) of \cref{lem:slice} (with $(t_1, \ldots, t_r)$ playing the role of $(\lambda_1, \ldots, \lambda_r)$ and with $s:=u$) gives
$$|\pi^{-1}(u)\cap(I_1\times\cdots\times I_r)| \leq \frac{|I_1| \cdots |I_r|}{\max_j |I_j| \cdot \min_j |I_j|} \leq \frac{\varepsilon^{-1} n^{r+1}}{\gamma^{1/r}n^{1+1/r} \cdot \varepsilon n}=\gamma^{-1/r}\varepsilon^{-2}n^{r-1-1/r}.$$
This contradicts \eqref{eq:prop43-fiber-lower} if $n$ is sufficiently large relative to $\varepsilon,\gamma,C$. Hence we may assume that the set $\pi^{-1}(u)\cap(I_1\times\cdots\times I_r)$ has affine dimension $r-1$. It follows that $(t_1, \ldots, t_r)$ is a scalar multiple of an integer vector.  Write
$$(t_1, \ldots, t_r)=a(\lambda_1, \ldots, \lambda_r),$$
where $a>0$ is a real number and $\lambda_1, \ldots, \lambda_r$ are nonzero integers satisfying $\gcd(\lambda_1, \ldots, \lambda_r)=1$.  Part (2) of \cref{lem:slice} (with $s:=a^{-1}u$) now tells us that
$$|\pi^{-1}(u)\cap(I_1\times\cdots\times I_r)| \ll_r \frac{|I_1| \cdots |I_r|}{\max_j (|\lambda_j| \cdot |I_j|)}\leq \frac{\varepsilon^{-1}n^{r+1}}{\max_j (|\lambda_j| \cdot |I_j|)}.$$
Combining this with \eqref{eq:prop43-fiber-lower} gives
$$\max_j (|a^{-1}t_j| \cdot |I_j|)=\max_j (|\lambda_j| \cdot |I_j|) \ll_r \varepsilon^{-1}\gamma^{-1}C n^2 \ll_{r,\varepsilon,\gamma,C} n^2,$$
as desired. 
\end{proof}

We now have the necessary tools to show that our set $A''$ is contained in a homogeneous arithmetic progression of length $O_C(n)$; in other words, we can ``reduce'' the rank of the GAP $Q'$ from $r'$ to $1$.

\begin{lemma}\label{lem:piece-of-structure}
For every $C>0$, there is some $C'=C'(C)>0$ such that the following holds.  Let $n$ be a positive integer, and suppose that $A \subseteq \mathbb{R}_{>0}$ is an $n$-element set with $|\FS(A)| \leq Cn^2$.  Then there are a real $a \in \mathbb{R}_{>0}$ and a subset $A' \subseteq A$ of size $|A'| \gg_C n$ such that $A' \subseteq a \cdot [1,\lfloor C'n\rfloor]$.
\end{lemma}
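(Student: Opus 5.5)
We chain together the steps already assembled in this section. First apply \cref{lem:smalldoubling} with $\delta=1/2$ to get $A_0\subseteq A$ with $|A_0|\geq n/2$ and $|A_0+A_0|\ll_C|A_0|$. The Freiman--Ruzsa Theorem then places $A_0$ in a proper symmetric GAP $Q$ of rank $O_C(1)$ and size $O_C(n)$, so $|A_0|\geq\delta|Q|$ for some $\delta=\delta(C)>0$. Next, \cref{lem:cleaning-2} with $\varepsilon=0.51$ gives $A''\subseteq A_0$ with $|A''|\gg_C n$, contained in a proper symmetric GAP $Q'=t_1\cdot[-s_1,s_1]+\cdots+t_{r'}\cdot[-s_{r'},s_{r'}]$ of rank $r'=O_C(1)$ and size $O_C(n)$. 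Moreover, each proper subspace of $\RR^{r'}$ contains at most about half of $\widetilde{A''}$; for $n$ large this is at most $(1-\varepsilon_0)|\widetilde{A''}|$ for a fixed $\varepsilon_0>0$.

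Since $\widetilde{A''}$ meets no proper subspace in too many points, \cref{thm:stability} gives $|\FS(\widetilde{A''})|\geq\gamma n^{r'+1}$ with $\gamma=\gamma(C)>0$. Also $\FS(\widetilde{A''})\subseteq I_1\times\cdots\times I_{r'}$ with $I_j:=[-ns_j,ns_j]$, and $\pi(\FS(\widetilde{A''}))=\FS(A'')$ has size at most $Cn^2$. Take $X:=\FS(\widetilde{A''})$ in \cref{prop:proj'}. Its box hypotheses need checking.

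\emph{Upper bound.} $\prod|I_j|\leq n^{r'}\prod(2s_j+1)\ll_C n^{r'+1}$.

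\emph{Lower bound.} $\prod|I_j|\geq|X|\geq\gamma n^{r'+1}$.

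\emph{Each $|I_j|\geq\varepsilon n$.} This holds since $|I_j|=2ns_j+1\geq 2n$ because $s_j\geq1$. If some $t_j$ is degenerate, properness still gives $s_j\geq1$, so this is fine.

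\cref{prop:proj'} then yields $a>0$ with every $t_j\in a\cdot\ZZ$ and $|a^{-1}t_j|\cdot ns_j\ll_C n^2$, that is, $|a^{-1}t_j|s_j\ll_C n$.

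Hence each element $\sum_j t_jx_j$ of $Q'$ lies in $a\cdot\ZZ$ with absolute value at most $a\sum_j|a^{-1}t_j|s_j\leq aC'n$, for suitable $C'=C'(C)$ since $r'=O_C(1)$. As $A''\subseteq\RR_{>0}$, this gives $A''\subseteq a\cdot[1,\lfloor C'n\rfloor]$, which is the claim with $A':=A''$.

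The main obstacle is bookkeeping rather than a new idea. We must check that the hypotheses of \cref{thm:stability} hold exactly as stated (the ``$+1$'' in \cref{lem:cleaning-2} is harmless for large $n$, and small $n$ is trivial by taking $a=\min A$ and $C'$ huge). We must also verify that all constants depend only on $C$, which holds because the rank is bounded in terms of $C$ and the dependence on $r'$ can be absorbed by maximizing over the finitely many possible ranks. The substantive rank-collapse work is already contained in \cref{prop:proj'}.
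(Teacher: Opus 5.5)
Your proposal is correct and follows the paper's proof essentially step for step. The chain is the same: \cref{lem:smalldoubling}, then Freiman--Ruzsa, then \cref{lem:cleaning-2} with $\varepsilon=0.51$, then \cref{thm:stability} on the lift $\widetilde{A''}$. You then apply \cref{prop:proj'} with $X=\FS(\widetilde{A''})$ and $I_j=[-ns_j,ns_j]$, and use positivity to land in $a\cdot[1,\lfloor C'n\rfloor]$. Your explicit check of the box hypotheses, and your handling of small $n$ and of the ``$+1$'' in \cref{lem:cleaning-2}, are slightly more careful than the paper's write-up.
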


\begin{proof} 
We apply \cref{lem:smalldoubling}, the Freiman--Ruzsa Theorem, and \cref{lem:cleaning-2} as described at the ends of \cref{subsec:small-doubling} and \cref{subsec:cleaning}.  This produces a subset $A'\subseteq A$ of size $|A'| \gg_C n$ and a proper symmetric GAP $Q\supseteq A'$ of rank $r = O_C(1)$ and size $|Q|\ll_C n$, with the property that no proper subspace of $\mathbb{R}^r$ contains more than half of the elements of $\widetilde{A'}$. Then \cref{thm:stability} with $\varepsilon=1/2$ yields
$$|\FS(\widetilde{A'})| \gg_C |\widetilde{A'}|^{r+1} \gg_C  n^{r+1}.$$
Let us parameterize the GAP $Q$ as in \eqref{eq:GAP}. Since $\widetilde{A'} \subseteq [-s_1,s_1] \times \cdots \times [-s_r,s_r]$ and $|\widetilde{A'}| \leq n$, we have
$$\FS(\widetilde{A'}) \subseteq [-ns_1,ns_1] \times \cdots \times [-ns_r,ns_r].$$
The properness of $Q$ ensures that
$$n\ll_C |A'| \leq (2s_1+1) \cdots (2s_r+1)=|Q| \ll_C n,$$
so we can apply \cref{prop:proj'} with the intervals $I_j:=[-ns_j,ns_j]$ for $1 \leq j \leq r$. For suitable values of $\gamma, \varepsilon>0$ (depending only on $C$), this proposition produces a constant $C'=C'(C)>0$ and some $a \in \mathbb{R}_{>0}$ such that $t_1, \ldots, t_r \in a \cdot \mathbb{Z}$ and $$|a^{-1}t_j| \cdot (2ns_j+1) \leq r^{-1}C' n^2$$
for each $1 \leq j \leq r$.  It follows that each $|a^{-1}t_j|s_j \leq r^{-1}C' n$ and hence (omitting floor functions)
$$A' \subseteq Q= a \cdot \sum_{j=1}^r a^{-1}t_j \cdot [-s_j,s_j] \subseteq a \cdot \sum_{j=1}^r [-r^{-1}C' n , r^{-1}C' n] \subseteq a \cdot [-C'n,C'n].$$
Since $A' \subseteq A \subseteq \mathbb{R}_{>0}$ by assumption, we in fact conclude that $A' \subseteq a \cdot [1,C'n]$, as desired.
\end{proof}

\subsection{From local to global structure}

\cref{lem:piece-of-structure} identified a large structured sub-piece of any set $A$ with few subset sums.  The following combinatorial lemma lets us upgrade this local structure to a global constraint on the entire set $A$.

\begin{lemma}\label{lem:local-to-global}
For every choice of $\varepsilon, C>0$, there is some positive integer $D:=D(\varepsilon,C)>0$ such that the following holds.  Let $n$ be a positive integer.  Suppose $S \subseteq [0,\lfloor\varepsilon^{-1}n^2\rfloor]$ is a set of integers of size $|S| \geq \varepsilon n^2$.  If $R$ is a set of positive reals of size at most $n$ such that $|\FS(R)+S| \leq Cn^2$, then there is a decomposition $R=R_1 \sqcup R_2$ such that:
\begin{itemize}
    \item $R_1 \subseteq D^{-1} \cdot \mathbb{Z}_{>0}$ and $\sum_{x \in R_1} x \ll_{\varepsilon,C} n^2$;
    \item  we have $|R_2| \ll_{\varepsilon,C} 1$.
\end{itemize}
\end{lemma}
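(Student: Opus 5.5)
We prove \cref{lem:local-to-global} by a packing argument: every element of $R$ that is not a small rational is forced to generate a new disjoint translate of $S$, and only boundedly many such translates fit into a set of size $Cn^2$.

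\textbf{Step 1 (disjoint translates).} For $T=\{\tau_1,\dots,\tau_k\}\subseteq \FS(R)$ with the translates $\tau_i+S$ pairwise disjoint, we have $k\varepsilon n^2\le |\FS(R)+S|\le Cn^2$, so $k\le C/\varepsilon$. We call a real $x$ \emph{compatible} if $(x+S)\cap S\neq\emptyset$. Since $S\subseteq\ZZ\cap[0,\varepsilon^{-1}n^2]$, a compatible $x$ must be an integer with $|x|\le \varepsilon^{-1}n^2$.

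\textbf{Step 2 (removing non-integers).} Build $R_2$ greedily. Process the elements of $R$ one at a time, maintaining a set $T\subseteq\FS(R)$ whose translates of $S$ are pairwise disjoint. Initially $T=\{0\}$. When an element $x$ is added and $\{\tau+x:\tau\in T\}\cup T$ still has pairwise disjoint translates, enlarge $T$ to $T\cup(x+T)$; this doubles $|T|$, so it can happen at most $\log_2(C/\varepsilon)+1$ times, and we put each such $x$ into $R_2$.

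A simpler version of the same idea handles the non-integers directly. Let $N=\{x\in R:x\notin\ZZ\}$, and suppose $x_1,\dots,x_k\in N$ lie in distinct classes modulo $\ZZ$. Then $0,x_1,\dots,x_k$ are pairwise non-compatible (their differences are non-integers), so Step 1 gives $k+1\le C/\varepsilon$. This bounds the number of classes but not the number of elements. For the elements, consider the chain of partial sums $0,y_1,y_1+y_2,\dots$ for $y_i\in N$, whose fractional parts are nonzero. I expect some modification is needed here, and I plan to handle it by allowing rationals with denominator $D$: rescale and take $D=\lcm$ of the possible denominators. If $k$ elements lie in one class $c+\ZZ$ with $c\notin\frac1q\ZZ$ for small $q$, then the sums $jc$ for $j\le k$ give $k$ distinct classes, hence disjoint translates, which forces $k\ll C/\varepsilon$. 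Dirichlet-type reasoning then shows that all but $O(1)$ elements of $R$ lie in $D^{-1}\ZZ$ with $D\le (C/\varepsilon)!$. The argument has to be run with $S$ replaced by $D\cdot S$ in the lattice $D^{-1}\ZZ$.

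\textbf{Step 3 (bounding the sum).} Let $R_1\subseteq D^{-1}\ZZ_{>0}$ be the remaining elements, and put $R_2:=R\setminus R_1$, removing also the $O(1)$ elements larger than $2\varepsilon^{-1}n^2$. Each such large element $x$ yields a translate $x+\FS(\cdot)+S$ that is disjoint from the previous ones, because all sums are positive and $x$ exceeds the diameter. Order the elements of $R_1$ increasingly and consider the partial sums $\sigma_m=x_1+\dots+x_m$. Greedily choose indices at which $\sigma$ has grown by more than $\varepsilon^{-1}n^2$; each such step gives a new disjoint translate $\sigma_m+S$. So there are $O(C/\varepsilon)$ such steps, each step increases $\sigma$ by at most $\varepsilon^{-1}n^2+\max x\ll n^2$, and therefore $\sum_{x\in R_1}x\ll_{\varepsilon,C}n^2$.

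The main obstacle is Step 2, controlling many non-integer elements that share a residue class. The key tool there is that multiples $jc$ of an irrational or high-denominator $c$ produce distinct residues, and therefore disjoint translates of $S$.
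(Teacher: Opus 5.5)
Your argument is correct and is essentially the paper's proof. Only $O(C/\varepsilon)$ residue classes modulo $1$ can occur. Within a class $c+\ZZ$ where $c$ has order larger than $m$ modulo $1$, the partial sums are congruent to $0,c,2c,\dots$ and so land in distinct classes, which means only $O_{\varepsilon,C}(1)$ elements have large order. You then take $D$ to be a common multiple of all small orders; your factorial plays the role of the paper's $\lcm(1,\dots,m)$. Finally, both the large elements and the total sum are controlled by increasing partial sums whose consecutive gaps exceed the diameter of $S$.

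A few parts of your write-up are unnecessary or loosely phrased, though the argument you settle on is complete:
\begin{itemize}
\item The doubling greedy in the first paragraph of your Step~2 is not needed, and on its own it would not give bounded denominators.
\item Rescaling $S$ to $D\cdot S$ is also unnecessary.
\item The count of large elements should be phrased via the chain $0,x_1,x_1+x_2,\dots$ rather than via translates of $\FS(\cdot)$.
\end{itemize}
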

\begin{proof}
Recall that $\FS(R)+S$ is the union of the translates $y+S$ over $y \in \FS(R)$. Since $|S| \geq \varepsilon n^2$ and $|\FS(R)+S| \leq Cn^2$, there can be at most $m:=\lceil C\varepsilon^{-1}\rceil$ pairwise disjoint such translates; we will make repeated use of this observation.

Define the threshold $T := \varepsilon^{-1} n^2+1$, and let $R_{\mathrm{big}}$ denote the set of elements of $R$ of size at least $T$. We claim that $|R_{\mathrm{big}}|< m$: Indeed, if $R_{\mathrm{big}}$ contained distinct elements $x_1, \ldots, x_m$, then the $m+1$ translates
$$S,\quad x_1+S,\quad (x_1+x_2)+S,\quad \dots,\quad (x_1+\cdots+x_m)+S$$
in $\FS(R)+S$ would be pairwise disjoint, in contradiction with our earlier observation. 

Next, we show that the remaining elements have sum $O_{\varepsilon,C}(n^2)$. Write
$R\setminus R_{\mathrm{big}}=\{y_1,\dots,y_\ell\}$, and consider the partial sums $$p_i:= y_1 + \cdots + y_i$$ for $0 \leq i \leq \ell$.  Greedily extract a subsequence of indices $0=i_0<i_1<\cdots <i_r \leq \ell$ such that $p_{i_{j}}-p_{i_{j-1}} \geq T$ for each $1 \leq j \leq r$.  Since the $r+1$ translates
$$S, \quad p_{i_1}+S, \quad p_{i_2}+S, \quad \ldots, \quad p_{i_r}+S$$
in $\FS(R)+S$ are pairwise disjoint, we have $r \leq m-1$.  Also, since every $y_i \leq T$, we have $p_{i_{j}}-p_{i_{j-1}} \leq 2T$ for each $1 \leq j \leq r$, and we have $p_{i_r} \geq y_1+\cdots+y_\ell-T$.  Thus
$$y_1+\cdots+y_\ell \leq T+p_{i_r}=T+\sum_{j=1}^r (p_{i_j}-p_{i_{j-1}}) \leq (2r+1)T \leq (2m-1)T \ll_{\varepsilon,C} n^2.$$

It remains to show that all but $O_{\varepsilon,C}(1)$ elements of $R\setminus R_{\mathrm{big}}$ lie in $D^{-1}\cdot \mathbb{Z}_{>0}$ for some positive integer $D=O_{\varepsilon,C}(1)$. For a nonzero real number $x$, define its \emph{residue class modulo $1$} as its image in $\mathbb{R}/\mathbb{Z}$, and its \emph{(additive) order modulo $1$} as the smallest positive integer $N$ such that $Nx\in\mathbb{Z}$ (if $x$ is irrational, then its additive order modulo $1$ is $\infty$). Since $S\subseteq \mathbb{Z}$, if $x,y\in R$ have distinct residue classes modulo $1$, then the translates $x+S$ and $y+S$ are disjoint. Hence, by the initial observation, the elements of $R$ occupy at most $m$ residue classes modulo $1$.

Let $R_{\mathrm{bad}}$ be the set of elements in $R\setminus R_{\mathrm{big}}$ with order greater than $m$ modulo $1$. We claim that each residue class modulo $1$ contains at most $m$ elements of $R_{\mathrm{bad}}$: Indeed, if some residue class contained at least $m$ such elements, then the corresponding partial sums would produce at least $m+1$ distinct residue classes modulo $1$, which is a contradiction. We conclude that $|R_{\mathrm{bad}}|\leq m^2=O_{\varepsilon,C}(1)$.

Finally, set $R_1: = R\setminus (R_{\mathrm{big}}\cup R_{\mathrm{bad}})$ and $R_2 :=R_{\mathrm{big}}\cup R_{\mathrm{bad}}$ and $D:= \lcm(1, 2, \dots, m) \ll_{\varepsilon,C} 1$. Since each element of $R_1$ has order at most $m$ modulo $1$, it is immediate that $R_1 \subseteq D^{-1} \cdot \ZZ_{>0}$.  Thus the decomposition $R = R_1\sqcup R_2$ satisfies the conclusion of the lemma.
\end{proof}

At last, we deduce \cref{thm: main inverse thm}.

\begin{proof}[Proof of \cref{thm: main inverse thm}]
\cref{lem:piece-of-structure} produces a positive constant $C'=O_C(1)$, some positive real $a$, and a subset $A' \subseteq A$ of size $|A'|\gg_C n$ such that $A' \subseteq a \cdot [1,\lfloor C'n \rfloor]$.  The first result of Nathanson mentioned in the introduction (see also the $M=0$ case of \cref{thm:linear}) gives
$$|\FS(A')| \geq \binom{|A'|+1}{2}+1 \gg |A'|^2 \gg_C n^2.$$
Now apply \cref{lem:local-to-global} with $R:= a^{-1} \cdot (A \setminus A')\subseteq \mathbb{R}_{>0}$ and $S:=\FS(a^{-1} \cdot A')\subseteq \mathbb{Z}$ and a suitable choice of $\varepsilon>0$ (depending only on $C$).  This lemma produces a positive integer $D \ll_{C} 1$ and a decomposition $A \setminus A'=A'_1 \sqcup A_2$, where $A'_1 \subseteq D^{-1}a\cdot \mathbb{Z}_{>0}$ and
$$\sum_{x \in a^{-1} \cdot A'_1}x \ll_C n^2 \quad \text{and} \quad |A_2|\ll_C 1.$$ 
We now obtain the conclusion of the theorem with $r:=D^{-1} a$ and $A_1:=A' \cup A'_1 \subseteq r \cdot \mathbb{Z}_{>0}$, since
\begin{equation*}
\sum_{x \in r^{-1} \cdot A_1}x=\sum_{x \in Da^{-1} \cdot A'_1}x+\sum_{x \in Da^{-1} \cdot A'}x \leq D\cdot O_{C}(n^2)+n \cdot DC'n \ll_C n^2. \qedhere
\end{equation*}
\end{proof}

\subsection{Controlling the size of the junk set}
In this final subsection, we establish the sharp bounds on $|A_2|$ in \cref{thm: main inverse thm} that we mentioned in the introduction.  The precise statement is as follows.

\begin{theorem}\label{thm:refined}
Let $\ell$ be a nonnegative integer, and let $n$ be a positive integer that is sufficiently large relative to $\ell$. If $A \subseteq \mathbb{R}_{>0}$ is an $n$-element set with
\begin{equation}\label{eq:junk-tight-constraint}
|\FS(A)| <(\ell+2)\left(\binom{n-\ell}{2}+1\right)+\binom{\ell+2}{3},
\end{equation}
then there is a decomposition $A=A_1 \sqcup A_2$, where: 
\begin{itemize}
    \item there is some $r \in \mathbb{R}_{>0}$ such that $A_1 \subseteq r\cdot\mathbb{Z}_{>0}$ and $\sum_{x \in r^{-1}\cdot A_1}x \ll_\ell n^2$;
    \item we have $|A_2|\leq \ell$.
\end{itemize}
\end{theorem}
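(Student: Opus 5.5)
The plan is to bootstrap from \cref{thm: main inverse thm} and then run an exact counting argument on the junk set. The hypothesis \eqref{eq:junk-tight-constraint} gives $|\FS(A)|\le C_\ell n^2$ with $C_\ell\approx(\ell+2)/2$. So \cref{thm: main inverse thm} supplies a decomposition $A=A_1\sqcup A_2$ with $A_1\subseteq r\cdot\ZZ_{>0}$, $\sum_{x\in r^{-1}A_1}x\ll_\ell n^2$, and $|A_2|=O_\ell(1)$.

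\textbf{Step 1: normalize the decomposition.} Among all such decompositions with bulk sum at most a fixed $K_\ell n^2$, and over all admissible $r$, choose one minimizing $k:=|A_2|$. I will assume $k\ge\ell+1$ and aim to show $|\FS(A)|$ is at least the right-hand side of \eqref{eq:junk-tight-constraint}. Minimality gives two kinds of information.
\begin{itemize}
\item Any element of $A_2$ lying in $r\cdot\ZZ$ must be large, of size $\gg n^2 r$. Otherwise it could be moved into $A_1$ at bounded cost.
\item More generally, no $r'=r/D$ with $D=O_\ell(1)$ absorbs an element of $A_2$ while keeping the bulk sum $O(n^2)$. Here I will use the partial-sum/residue argument from the proof of \cref{lem:local-to-global}.
\end{itemize}
By \cref{thm:linear}-type reasoning (Nathanson's bound), $S:=\FS(r^{-1}A_1)\subseteq[0,N]$ with $N\ll n^2$ and $|S|\ge\binom{n-k+1}{2}+1$. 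Possibly after shrinking $A_1$, I would also arrange that $S$ is a dense subset of an interval containing the long initial progression structure, so that translates of $S$ overlap only in controlled ways.

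\textbf{Step 2: split $\FS(A)$ by translates.} Write
$$\FS(A)=\bigcup_{y\in\FS(r^{-1}A_2)}(y+S).$$
Group the $y$'s into classes, where two values are in the same class if they are congruent modulo $\ZZ$ and within distance $O(N)$ of each other; translates in different classes are disjoint. Each class $T$ contributes $|T+S|\ge|S|+(\text{extra})$. The target bound has the shape
$$(\ell+2)\,(\text{minimal }|S|)+\binom{\ell+2}{3}.$$
So I want to show two things:
\begin{itemize}
\item the number of classes is at least $k+1\ge\ell+2$;
\item the classes' excess sizes $|T+S|-|S|$ add up to at least $\binom{k+1}{3}$, or else some class is so large that it gives far more.
\end{itemize}
For the count of classes I would use the chain $0,\ b_1,\ b_1+b_2,\ \ldots,\ b_1+\cdots+b_k$ of partial sums of $A_2=\{b_1<\cdots<b_k\}$, ordered suitably. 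If two of these fell in the same class, then a sub-sum of $A_2$ would be an integer of size $O(n^2)$. I would then argue that the offending elements could be re-absorbed into $A_1$, after replacing $r$ by $r/D$, contradicting minimality.

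\textbf{Step 3: the exact count.} This is where the lower-order term $\binom{\ell+2}{3}$ must come out, and I expect it to be the main obstacle. The heuristic extremal example is $A_1=[n-\ell-1]$ together with $\ell+1$ junk elements in a common nonzero residue class. There the classes are indexed by the number $j$ of junk elements used, and within class $j$ the $\binom{\ell+1}{j}$ translates spread out, creating extra elements beyond $|S|$. For each class I would bound $|T+S|-|S|$ from below using the structure of $S$ near its endpoints, where $S$ contains $\{0,1,\ldots,n-k\}$ and its reflection. Distinct translates by $t<t'$ in the same class then add at least $\min(t'-t,\,n-k)$ new points. Summing these increments over $j$ should give $\sum_j(\ldots)\ge\binom{k+1}{3}$. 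I would first verify this inequality when all junk elements share a residue, and then check that mixing residues only increases the count of classes by more than the lost excess. If this direct accounting becomes unwieldy, the fallback is induction on $k$, adding one junk element at a time as in the proof of \cref{thm:linear}. At each step one tracks $|\FS(A(i))\setminus\FS(A(i-1))|$ via the minimum number of arithmetic progressions of the new common difference.
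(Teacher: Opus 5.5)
Your outer framework matches the paper's: bootstrap from \cref{thm: main inverse thm}, then count translates of $S$ class by class. Both load-bearing steps, however, have genuine gaps.

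\textbf{Step~2.} The chain-of-partial-sums argument for at least $k+1$ classes fails. A sub-sum of $A_2$ being a small integer does not let you re-absorb any element. Take $r=1$ and $A_2=\{\sqrt2-1,\,2-\sqrt2\}$. Both orderings of the chain contain $0$ and $1$, which lie in the same class, yet neither element lies in $D^{-1}\ZZ$ for any $D$. What is needed is a count of the residues met by \emph{all} of $\FS(A_2)$. That is \cref{lem:junk}: elements of large order modulo $1$ meet at least one more residue than their number, with equality only when they lie in $\{\pm\alpha\}+\ZZ$. Combining this multiplicatively with the scale separation of the big elements gives $(\ell_{\mathrm{bad}}+1)(\ell_{\mathrm{big}}+1)\le\ell+2$.

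Separately, minimizing $|A_2|$ under a \emph{fixed} cap $K_\ell n^2$ does not force leftover elements of $r\ZZ$ to be large. The minimizer's bulk sum may sit just under the cap, and passing to $r/D$ multiplies it by $D$. The paper avoids this with a greedy split that uses geometrically growing size thresholds ($a_{k_0}>10Ka_{k_0-1}$) and an lcm-tracking rule for orders modulo $1$.

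\textbf{Step~3.} You rightly flag this as the crux, but it is not carried out, and both of its proposed ingredients are flawed. First, the containment $S\supseteq\{0,\dots,n-k\}$ is not supplied by the decomposition from \cref{thm: main inverse thm}, which allows, e.g., $r^{-1}A_1\subseteq[n,3n]$. It is also unnecessary, since $|T+S|\ge|T|+|S|-1$ holds for any finite sets of reals.

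Second, and more seriously, ``mixing residues only increases the number of classes'' is false. Take $\ell=1$, $A_1=[n-2]$, and the same $A_2=\{\sqrt2-1,\,2-\sqrt2\}$, whose residues are $\pm\alpha$. This gives exactly $k+1=3$ classes and $|\FS(A)|=3\bigl(\binom{n-1}{2}+1\bigr)+1$, which is equality in \eqref{eq:junk-tight-constraint}. So this configuration must be counted exactly, not dismissed.

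The missing idea is the paper's reduction in the residual case $\{\ell_{\mathrm{big}},\ell_{\mathrm{bad}}\}=\{0,\ell+1\}$. There one builds a model set $B=(A_1\times\{0\})\cup(B_1\times\{1\})\subseteq\ZZ^2$ with $|\FS(B)|=|\FS(A)|$.
\begin{itemize}
\item In the big case, one must first prove $d_\ell=O_\ell(1)$, so that $\lfloor x/X\rfloor$ reads off the number of big summands.
\item In the $\pm\alpha$ case, one negates $A_-$ and uses positivity of $A$ to get $|A_+\cup(-A_-)|=\ell+1$.
\end{itemize}
Then \cref{prop:calculation-for-tightness} bounds layer $j$ below by $|\FS(A_1)|+|j^\wedge B_1|-1$. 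It then applies Nathanson's restricted-sumset bound $|j^\wedge B_1|\ge j(\ell+1-j)+1$. Summing $j(\ell+1-j)$ over $0\le j\le\ell+1$ is exactly what produces $\binom{\ell+2}{3}$, and your proposal never identifies this bound.
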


It follows from \cref{prop:calculation-for-tightness} below that the condition~\eqref{eq:junk-tight-constraint} is tight.  In particular, in \cref{thm: main inverse thm}, one can take $|A_2| \leq \lfloor 2C-1 \rfloor$ (for $n$ sufficiently large in terms of $C$), and this bound is optimal. 

To prove \cref{thm:refined}, we will start with the decomposition of $A$ provided by \cref{thm: main inverse thm} and then derive further constraints in the style of the proof of \cref{lem:local-to-global}.  The central calculation, from which the expression in \eqref{eq:junk-tight-constraint} arises, is encapsulated in the following proposition.

\begin{proposition}\label{prop:calculation-for-tightness}
Let $\ell$ be a nonnegative integer, and let $n \geq \ell+1$ be a positive integer.  Let $B_0 \subseteq \mathbb{R}_{>0}$ be an $(n-\ell-1)$-element set, and let $B_1 \subseteq \mathbb{R}$ be an $(\ell+1)$-element set.  Set $B:=(B_0 \times \{0\}) \cup (B_1 \times \{1\}) \subseteq \mathbb{R}^2$.  Then
$$|\FS(B)| \geq (\ell+2)\left(\binom{n-\ell}{2}+1\right)+\binom{\ell+2}{3}.$$
Equality is attained if $B_0$ is a homogeneous arithmetic progression and $B_1$ is an arithmetic progression, with the same common difference; moreover, when $n-5 \geq \ell \geq 3$, this is the only way for equality to be attained.
\end{proposition}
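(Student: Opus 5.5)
The plan is to slice $\FS(B)$ according to the second coordinate. For $0\le k\le \ell+1$, let $\Sigma_k(B_1)$ denote the set of sums of $k$-element subsets of $B_1$, so $\Sigma_0(B_1)=\{0\}$. A subset of $B$ using exactly $k$ elements of $B_1\times\{1\}$ has second coordinate $k$, so
$$\FS(B)=\bigsqcup_{k=0}^{\ell+1}\bigl(\FS(B_0)+\Sigma_k(B_1)\bigr)\times\{k\},\qquad |\FS(B)|=\sum_{k=0}^{\ell+1}\bigl|\FS(B_0)+\Sigma_k(B_1)\bigr|.$$
Each summand is a sumset of two finite sets of reals, so I use the elementary bound $|X+Y|\ge |X|+|Y|-1$. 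Set $m:=n-\ell-1=|B_0|$. Nathanson's bound from the introduction gives $|\FS(B_0)|\ge \binom{m+1}{2}+1=\binom{n-\ell}{2}+1$.

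Next I need the classical bound $|\Sigma_k(B_1)|\ge k(N-k)+1$ for an $N$-element set of reals, where $N=\ell+1$. To see it, order $B_1=\{b_1<\cdots<b_N\}$ and start from the index set $\{1,\dots,k\}$. Repeatedly raise the largest index by one until it reaches $N$, then do the same with the second largest index up to $N-1$, and so on. Each move strictly increases the sum, and there are $k(N-k)$ moves, which yields $k(N-k)+1$ distinct sums. Combining these bounds,
$$|\FS(B)|\ge \sum_{k=0}^{\ell+1}\Bigl(\tbinom{n-\ell}{2}+1+k(\ell+1-k)\Bigr)=(\ell+2)\Bigl(\tbinom{n-\ell}{2}+1\Bigr)+\sum_{k=0}^{\ell+1}k(\ell+1-k).$$
The identity $\sum_{k=0}^{N}k(N-k)=\binom{N+1}{3}$ with $N=\ell+1$ gives exactly $\binom{\ell+2}{3}$.

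For the equality example, take $B_0=\{d,2d,\dots,md\}$ and $B_1$ an arithmetic progression with common difference $d$. Then $\FS(B_0)$ is the full progression $d\cdot[0,\binom{m+1}{2}]$. Each $\Sigma_k(B_1)$ is a progression of difference $d$ with exactly $k(N-k)+1$ terms, because it is squeezed between the minimal and maximal $k$-sums and all sums are congruent modulo $d$. The sum of two arithmetic progressions with the same difference has size $|X|+|Y|-1$, so every inequality above is tight.

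For uniqueness, assume $n-5\ge \ell\ge 3$, so $m\ge 4$ and $N\ge 4$. Equality forces equality in every slice. The $k=0$ slice forces $|\FS(B_0)|=\binom{m+1}{2}+1$, so $B_0$ is a homogeneous arithmetic progression by the $M=0$ case of \cref{thm:linear}, with some difference $d$. The $k=1$ slice forces $|\FS(B_0)+B_1|=|\FS(B_0)|+|B_1|-1$. Both sets have at least $2$ elements, and the inverse theorem for $|X+Y|=|X|+|Y|-1$ in $\RR$ then forces $X$ and $Y$ to be arithmetic progressions with a common difference. Hence $B_1$ is an arithmetic progression with difference $d$. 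The main care point is this last step: one must check that the inverse theorem for $|X+Y|=|X|+|Y|-1$ applies with $|X|,|Y|\ge 2$, which it does here since $|B_1|=\ell+1\ge 4$. Alternatively, the equality case for $k=2$ of the $k$-sum bound also forces $B_1$ to be an arithmetic progression.
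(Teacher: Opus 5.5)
Your proof is correct and follows the paper's argument: you slice $\FS(B)$ by second coordinate, apply $|X+Y|\ge |X|+|Y|-1$ together with Nathanson's bounds for $|\FS(B_0)|$ and for $k$-fold restricted sums (which you reprove by the chain of index moves), and sum using $\sum_k k(\ell+1-k)=\binom{\ell+2}{3}$. The only divergence is in the uniqueness step: you use the $k=0$ and $k=1$ slices (\cref{thm:linear} plus the inverse theorem for $|X+Y|=|X|+|Y|-1$), whereas the paper uses the $k=2$ slice, which is exactly your suggested alternative; your route avoids the inverse theorem for restricted sumsets, so it needs only $\ell\ge 1$ rather than $\ell\ge 3$.
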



\begin{proof}
For $0 \leq j \leq \ell+1$, define the $j$-fold restricted sumset of a set $A$ by
$$j^{\wedge}A:=\{a_1+\cdots+a_j: ~a_1, \ldots, a_j \in A \text{ distinct}\}.$$
With this notation, we can express
$$\FS(B)=\bigsqcup_{j=0}^{\ell+1} \left( \FS(B_0)+j^{\wedge}B_1  \right) \times \{j\}.$$
It is easy to check (see~\cite[Theorems 1 and 2]{N95}) that for each $0 \leq j\leq \ell+1$, the quantity $|j^{\wedge}B_1|$ is minimized when $B_1$ is an arithmetic progression, in which case
$$|j^{\wedge}B_1|=j(\ell+1)-j^2+1;$$
moreover, arithmetic progressions are the unique minimizers when $2 \leq j \leq \ell-1$.  At the same time, Nathanson's theorem (mentioned in the introduction) and \cref{thm:linear} tell us that
$$|\FS(B_0)| \geq \binom{n-\ell}{2}+1,$$
and that when $n-\ell-1 \geq 4$, the unique minimizers are homogeneous arithmetic progressions.  It follows from the preceding two centered equations that
\begin{equation}\label{eq:layer-sumset}
|\FS(B_0)+j^{\wedge}B_1| \geq |\FS(B_0)|+|j^{\wedge}B_1|-1 \geq \binom{n-\ell}{2}+j(\ell+1)-j^2+1
\end{equation}
for each $0 \leq j \leq \ell+1$.  Summing gives
\begin{align}\label{eq:layers-combined}
|\FS(B)| &\geq \sum_{j=0}^{\ell+1} \left(\binom{n-\ell}{2}+j(\ell+1)-j^2+1  \right)\\
 &=(\ell+2)\left(\binom{n-\ell}{2}+1\right)+\binom{\ell+2}{3},\notag 
\end{align}
which establishes the first part of the proposition.

Suppose that $n-5 \geq \ell \geq 3$ and equality holds in \eqref{eq:layers-combined}.  Then both inequalities in \eqref{eq:layer-sumset} are equalities for all $j$.  Let us examine the $j=2$ instance.  Equality in the second inequality of \eqref{eq:layer-sumset} implies that $B_0$ is a homogeneous arithmetic progression with some common difference $r_0>0$ and $B_1$ is an arithmetic progression with some common difference $r_1>0$.  Equality in the first inequality of \eqref{eq:layer-sumset} implies that $r_0=r_1$.  It is easy to check that equality holds in \eqref{eq:layers-combined} whenever $B_0$ is a homogeneous arithmetic progression and $B_1$ is an arithmetic progression, with the same common difference.
\end{proof}

This proposition lets us discuss a key example that will guide the proof of \cref{thm:refined}.  Let $B_0:=[n-\ell-1]$ and $B_1:=[\ell+1]$.  For $\alpha>0$, consider the set
$$A:=B_0 \cup (\alpha+B_1) \subseteq \mathbb{R},$$
which is the image of $B:=([n-\ell-1] \times \{0\}) \cup ([\ell+1] \times \{1\})$ under the map $\psi: (x,y) \mapsto x+\alpha y$.  Suppose that either $\alpha$ has order at least $\ell+2$ modulo $1$ (notably including the irrational $\alpha$ case) or $\alpha$ is sufficiently large (e.g., at least $10(\ell+1) n^2$).  One should think of this condition as a quantitative sense, either non-archimedean or archimedean, in which $\alpha$ is ``additively independent of small integers''.  Then the map $\psi$ is a sort of Freiman isomorphism on $B$ for restricted sums, in the sense that two subset sums in $B$ coincide if and only if the corresponding subset sums in $A$ coincide.  Thus $|A|=|B|=n$, and \cref{prop:calculation-for-tightness} tells us that $|\FS(A)|=|\FS(B)|$ is the quantity appearing on the right-hand side of~\eqref{eq:junk-tight-constraint}.  If $\alpha$ is transcendental, for instance, then every dilate of $\mathbb{Z}_{>0}$ will miss at least $\ell+1$ elements of $A$; this shows the optimality of the condition~\eqref{eq:junk-tight-constraint}.

We also isolate a simple lemma about subset sums in $\mathbb{R}/\mathbb{Z}$.

\begin{lemma}\label{lem:junk}
Let $\ell$ be a nonnegative integer.  Let $A \subseteq \mathbb{R}$ be an $\ell$-element set of numbers each with order at least $\ell+1$ modulo $1$.  Then $\FS(A)$ meets at least $\ell+1$ distinct residue classes modulo $1$.  If we further assume that all of the elements of $A$ have order at least $\ell+2$ modulo $1$, then equality holds if and only if the absolute values of the elements of $A$ are all equal modulo $1$.
\end{lemma}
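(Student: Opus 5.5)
We will work throughout in $\mathbb{R}/\mathbb{Z}$ and grow the set one element at a time, in the same spirit as the proofs of \cref{thm:linear} and \cref{lem:one-step-equality}. Write $A=\{x_1,\ldots,x_\ell\}$. For any $B\subseteq A$, let $F(B)\subseteq \mathbb{R}/\mathbb{Z}$ denote the image of $\FS(B)$ modulo $1$. The case $\ell=0$ is trivial, so assume $\ell\ge 1$.

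\emph{The lower bound.} Let $B\subseteq A$ and $x\in A\setminus B$. Then
$$F(B\cup\{x\})=F(B)\cup\bigl(x+F(B)\bigr).$$
If this set equals $F(B)$, then $F(B)$ is invariant under translation by $x$. Hence $F(B)$ is a union of cosets of the cyclic subgroup $\langle x\rangle$. Since $0\in F(B)$, this gives $|F(B)|\ge \operatorname{ord}(x)\ge \ell+1$, and we are already done. Otherwise each step adds at least one new residue class. Adding $x_1,\ldots,x_\ell$ in turn to $F(\emptyset)=\{0\}$ therefore gives $|F(A)|\ge \min(\ell+1,\ell+1)=\ell+1$.

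\emph{Equality.} Now assume every element has order at least $\ell+2$ and that $|F(A)|=\ell+1$. Under this stronger order hypothesis, a stalled step is impossible: a stall would force $|F(A)|\ge \ell+2$. So along \emph{any} ordering of $A$, each step adds exactly one class. In particular, for every subset $B\subseteq A$ we have $|F(B)|=|B|+1$. Indeed, build $B$ first and then the rest of $A$: all steps add at least one class and the total gain is only $\ell$, so every step adds exactly one.

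Apply this to a $2$-element subset $\{x,y\}\subseteq A$. The four classes $0,x,y,x+y$ must occupy exactly $3$ residue classes. We check which coincidences are possible:
\begin{itemize}
\item $x\equiv 0$, $y\equiv 0$ are impossible, since the orders exceed $1$;
\item $x+y\equiv x$ and $x+y\equiv y$ are impossible for the same reason.
\end{itemize}
So the only possible coincidences are $x\equiv y$ or $x+y\equiv 0$, i.e.\ $x\equiv \pm y \pmod 1$. Since this holds for every pair, fixing one $\alpha:=x_1$ gives $x_i\equiv\pm\alpha$ for all $i$. This is exactly the statement that the absolute values of the elements are equal modulo $1$.

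\emph{The converse.} Suppose $p$ elements are $\equiv\alpha$ and $q$ are $\equiv-\alpha$, with $p+q=\ell$. Then every subset sum is $\equiv k\alpha$ for some integer $-q\le k\le p$. This gives at most $\ell+1$ classes, and with the lower bound, exactly $\ell+1$.

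The only delicate point is why the order hypothesis improves from $\ell+1$ to $\ell+2$ in the equality case. It is needed exactly to rule out a stalled step, which would otherwise permit $|F(A)|=\ell+1$ via a full coset of $\langle x\rangle$ of size $\ell+1$. With that excluded, the pairwise analysis above is routine.
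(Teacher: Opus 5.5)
Your proof is correct and follows the paper's argument. The lower bound is the paper's induction unrolled into one-element-at-a-time additions, with the same key point: a stalled step makes the current set of residues invariant under $x$, so it has size at least $\operatorname{ord}(x)$. For the equality case, which the paper only says is ``easily checked inductively,'' your observation fills in the omitted details cleanly: equality forces every step in every ordering to add exactly one class, so each pair $\{x,y\}$ spans exactly three classes and hence $x\equiv\pm y \pmod 1$.
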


\begin{proof}
We proceed by induction on $\ell$.  The base case $\ell=0$ is trivial.  For the induction step, suppose that $\ell \geq 1$.  Pick an element $x \in A$.  If $\FS(A \setminus \{x\})$ already meets at least $\ell+1$ residue classes modulo $1$, then there is nothing to show.  Otherwise, $\FS(A \setminus \{x\})$ meets exactly $\ell$ residue classes modulo $1$ (by induction), and hence its image in $\mathbb{R}/\mathbb{Z}$ is not invariant under adding $x$.  It follows that $\FS(A)$ meets strictly more residue classes modulo $1$, as desired.  The equality condition is easily checked inductively.
\end{proof}

\begin{proof}[Proof of \cref{thm:refined}]
To begin, \Cref{thm: main inverse thm} with $C:=(\ell+3)/2$ (for instance) provides a constant $K=K(\ell)\geq 1$ and a decomposition $A=A'_1 \sqcup A'_2$, where:
\begin{itemize}
    \item there is some $r' \in \mathbb{R}_{>0}$ such that $A'_1 \subseteq r'\cdot\mathbb{Z}_{>0}$ and $\sum_{x \in (r')^{-1}\cdot A'_1}x \leq K n^2$;
    \item we have $|A'_2|\leq K$.
\end{itemize}
Since dilating $A$ by a positive real changes neither the hypothesis nor the conclusion of \Cref{thm:refined}, we may assume that $r'=1$.   To obtain the desired decomposition $A=A_1 \sqcup A_2$, we will add some elements of $A'_2$ to $A'_1$ to obtain our ``bulk'' set $A_1$, and the remaining elements of $A'_2$ will form our ``junk'' set $A_2$.  Notice that for $A_1$ to satisfy the conclusion of \cref{thm:refined}, it can include only elements of $A'_2$ that have both size $O_\ell(n^2)$ and order $O_\ell(1)$ modulo $1$.  Instead of using a fixed threshold for ``small size'' and ``small order modulo $1$'', we will identify the ``okay'' elements of $A'_2$ one at a time according to a greedy procedure.

Let us be more precise.  We will split
$$A'_2=A_{\mathrm{big}} \sqcup A_{\mathrm{bad}} \sqcup A_{\mathrm{okay}},$$ roughly as in the proof of \Cref{lem:local-to-global}.  
Write $A'_2=\{a_1, \ldots, a_k\}$ with $0<a_1<\cdots<a_k$ and $k \leq K$.
With the convention $a_0:=Kn^2$, let $k_0$ denote the smallest index $j \in [k]$ such that $a_{j}>10K a_{j-1}$, if such a $j$ exists; then set $A_{\mathrm{big}}:=\{a_{k_0}, \ldots, a_k\}$.  (The choice of the constant $10$ is not important.)  We now iteratively assign the elements of $A'_2 \setminus A_{\mathrm{big}}$ to $A_{\mathrm{bad}}$ or $A_{\mathrm{okay}}$.  At each stage of our procedure, let $L$ denote the lcm of the orders modulo $1$ of the elements that we have already assigned to $A_{\mathrm{okay}}$ (starting with $L=1$ at the beginning of the procedure).  As long as there is a remaining element $x \in A'_2 \setminus A_{\mathrm{big}}$ such that $Lx$ has order at most $(K+\ell+2)$ modulo $1$, assign $x$ to $A_{\mathrm{okay}}$, and update $L$ accordingly.  Once there are no such elements, assign those that remain to $A_{\mathrm{bad}}$.

We now obtain a new decomposition $A=A_1 \sqcup A_2$ by setting
$$A_1:=A'_1 \cup A_{\mathrm{okay}} \quad \text{and} \quad A_2:=A_{\mathrm{big}} \sqcup A_{\mathrm{bad}}.$$
Our construction of $A_{\mathrm{okay}}$ guarantees that the elements of $A_1$ all have order at most $(K+\ell+2)^K\ll_\ell 1$ modulo $1$ (since $L$ increased by a factor of at most $K+\ell+2$ at each step), and that their sum is at most $$Kn^2(1+10K+(10K)^2+\cdots+(10K)^{k_0-1}) \ll_\ell n^2.$$
Let us further dilate $A$ by the lcm of the orders modulo $1$ of the elements of $A_1$, so that $A_1 \subseteq \mathbb{Z}_{>0}$.  The set $A_1$ still has sum $O_\ell(n^2)$, so it satisfies the conclusion of the theorem.  It remains to show that $|A_2| \leq \ell$.

The rest of the proof proceeds by a careful analysis of the fact that $\FS(A)$ is the union of the translates of $\FS(A_1)$ by the elements of $\FS(A_2)$.  Let us write
$$\ell_{\mathrm{big}}:=|A_{\textrm{big}}| \quad \text{and} \quad \ell_{\mathrm{bad}}:=|A_{\textrm{bad}}|,$$
so that $|A_2|=\ell_{\mathrm{big}}+\ell_{\mathrm{bad}}$.  We will begin by showing that either $\ell_{\mathrm{big}}+\ell_{\mathrm{bad}} \leq \ell$ or $\{\ell_{\mathrm{big}},\ell_{\mathrm{bad}}\}=\{0,\ell+1\}$, and then we will conclude the argument by eliminating the latter.  Two relevant properties of the decompositions constructed above are that all elements of $A_{\textrm{bad}}$ have order at least $K+\ell+3$ modulo $1$, and that all elements of $A_{\textrm{big}}$ have size at least $5M$, where $$M:=\sum_{x \in A_1}x+\sum_{x \in A_{\textrm{bad}}}x.$$  For the latter property, recall that the elements of $A'_1$ sum to at most $Kn^2 \leq a_{k_0-1} \leq Ka_{k_0-1}$; the elements of $A_{\mathrm{okay}} \sqcup A_{\mathrm{bad}}=A'_2 \setminus A_{\mathrm{big}}$ sum to at most 
$$|A'_2 \setminus A_{\mathrm{big}}| \cdot a_{k_0-1} \leq K a_{k_0-1};$$
and all elements of $A_{\textrm{big}}$ have size at least $10Ka_{k_0-1}$.

Since $|A_1|=n-O_\ell(1)$, Nathanson's result gives $|\FS(A_1)|\geq n^2/2-O_\ell(n)$.  It follows that $\FS(A)$ contains at most $\ell+2$ disjoint translates of $\FS(A_1)$ (as long as $n$ is sufficiently large).  \Cref{lem:junk} provides elements $b_1, \ldots, b_{\ell_{\mathrm{bad}}+1} \in \FS(A_{\mathrm{bad}})$ whose residues modulo $1$ are all distinct.  Write $A_{\textrm{big}}=\{c_1, \ldots, c_{\ell_{\textrm{big}}}\}$.  Then the $\ell_{\textrm{big}}+1$ partial sums
$$0, \quad c_1, \quad c_1+c_2, \quad \ldots, \quad c_1+c_2+\cdots+c_{\ell_{\textrm{big}}}$$
in $\FS(A_{\textrm{big}})$ all differ by at least $5M$.  Consider the $(\ell_{\textrm{bad}}+1)(\ell_{\textrm{big}}+1)$ elements of $\FS(A_2)$ of the form
$$b_i+(c_1+\cdots+c_j),$$
with $1 \leq i \leq \ell_{\textrm{bad}}+1$ and $0 \leq j \leq \ell_{\textrm{big}}$.  These elements are all distinct, and moreover their translates of $\FS(A_1)$ are all disjoint.  Indeed, given an element of the form $x=b_i+(c_1+\cdots+c_j)+y$ with $y \in \FS(A_1)$, one can recover $j$ by looking at the approximate size of $x$ (due to the fact that the $c$'s are larger than $M$), and then one can recover $i$ from the residue class of $x-(c_1+\cdots+c_j)$ modulo $1$.  It follows that $\FS(A)$ contains at least $(\ell_{\textrm{bad}}+1)(\ell_{\textrm{big}}+1)$ disjoint translates of $\FS(A_1)$, and the observation from the beginning of this paragraph gives
$$(\ell_{\textrm{bad}}+1)(\ell_{\textrm{big}}+1) \leq \ell+2.$$
Thus either $\ell_{\mathrm{big}}+\ell_{\mathrm{bad}} \leq \ell$ or $\{\ell_{\mathrm{big}},\ell_{\mathrm{bad}}\}=\{0,\ell+1\}$.  The last step of the proof is showing that the latter possibility is impossible. 

Assume for the sake of contradiction that $\{\ell_{\mathrm{big}},\ell_{\mathrm{bad}}\}=\{0,\ell+1\}$.  We will construct an auxiliary set $B \subseteq \mathbb{R}^2$ with $|A|=|B|$ and $|\FS(A)|=|\FS(B)|$ which satisfies the hypotheses of \cref{prop:calculation-for-tightness}; this proposition will then produce the desired contradiction.  One should think of $B$ as a model set which is Freiman-isomorphic to $A$ with respect to subset sums.  This strategy requires establishing several structural properties of $A_2$---recall the key example described after \cref{prop:calculation-for-tightness}---which look slightly different according to which of $A_{\mathrm{big}},A_{\mathrm{bad}}$ is nonempty.

We begin with the case where $\ell_{\mathrm{big}}=\ell+1$ and $\ell_{\mathrm{bad}}=0$.  We will show that $A_{\mathrm{big}}$ is contained in a (real) translate of an interval of $O_\ell(1)$ consecutive integers.  This is vacuously true if $\ell=0$.  For $\ell \geq 1$, we observe that all of the elements of $A_{\mathrm{big}}$ have the same residue modulo $1$: Indeed, if (for instance) $c_1,c_2$ had distinct residues modulo $1$, then the $\ell+3$ translates of $\FS(A_1)$ by
$$0, \quad c_1, \quad c_2, \quad c_1+c_2, \quad c_1+c_2+c_3, \quad \ldots, \quad c_1+c_2+\cdots+c_{\ell+1}$$
would all be disjoint, which is impossible.  Let us now express the elements of $A_{\mathrm{big}}$ as $$X, X+d_1, \ldots, X+d_{\ell},$$
where $X \geq 5M$ is a real and $0<d_1<\cdots<d_{\ell}$ are positive integers.

We claim that $d_\ell=O_\ell(1)$.  Recall that $|\FS(A_1)| \geq \binom{n-\ell}{2}+1$ (by Nathanson's result), and consider the $\ell+2$ disjoint translates of $\FS(A_1)$ by the partial sums of the sequence
$$X+d_\ell, \quad X, \quad X+d_1, \quad X+d_2, \quad \ldots, \quad X+d_{\ell-1}.$$
These provide $(\ell+2)\left(\binom{n-\ell}{2}+1  \right)$ distinct elements of $\FS(A)$.  Additionally, the translate of $\FS(A_1)$ by $X$ provides
$$|(X+\FS(A_1)) \setminus (X+d_\ell+\FS(A_1))|=|(d_\ell+\FS(A_1)) \setminus \FS(A_1)|$$
further elements of $\FS(A)$.  The latter quantity is the minimum number of arithmetic progressions of common difference $d_\ell$ into which $\FS(A_1)$ can be decomposed.  Since $\FS(A_1) \subseteq [0,Kn^2]$, we see that each arithmetic progression of common difference $d_\ell$ contains at most $Kn^2/d_\ell+1$ elements of $\FS(A_1)$, so
$$|(d_\ell+\FS(A_1)) \setminus \FS(A_1)| \geq \frac{\binom{n-\ell}{2}+1}{Kn^2/d_\ell+1}.$$
Our assumption on $|\FS(A)|$ guarantees that the latter quantity is smaller than $\binom{\ell+2}{3}$, and rearranging gives $d_\ell=O_\ell(1)$ provided $n\gg_\ell 1$. 

We are now ready to apply \cref{prop:calculation-for-tightness}.  Consider the auxiliary set
$$B:=(A_1 \times \{0\}) \cup (\{0,d_1, \ldots, d_\ell\} \times \{1\}) \subseteq \mathbb{Z}^2,$$
which is obtained from $A$ by sending each element $i+jX$ (with $j \in \{0,1\}$ and $0 \leq i \leq Kn^2$) to $(i,j)$.  Observe that for each element $x \in \FS(A)$, one can uniquely determine how many summands came from $A_1$ and how many came from $A_{\mathrm{big}}$: The number of summands from $A_{\mathrm{big}}$ is precisely $\lfloor x/X \rfloor$ (since the elements of $A_1$ are all small and the elements of $A_{\mathrm{big}}$ are all slightly larger than $X$).  It follows that two subset sums of $A$ coincide if and only if the corresponding subset sums in $B$ coincide, so $|\FS(A)|=|\FS(B)|$.  Now \cref{prop:calculation-for-tightness} provides the desired contradiction.

It remains to treat the case where $\ell_{\mathrm{bad}}=\ell+1$ and $\ell_{\mathrm{big}}=0$.  We proceed along much the same lines as in the previous case.  Since $\FS(A)$ contains at most $(\ell+2)$ disjoint translates of $\FS(A_1)$, \cref{lem:junk} provides some $\alpha \in (0,1)$, of order at least $\ell+2$ modulo $1$, such that $A_{\mathrm{bad}} \subseteq \{-\alpha,\alpha\}+\mathbb{Z}$.  Let us write
$$A_{\mathrm{bad}}=(-\alpha+A_-) \cup (\alpha+A_+),$$
where $A_- \subseteq \mathbb{Z}_{\geq 1}$ and $A_+ \subseteq \mathbb{Z}_{\geq 0}$ and $|A_-|+|A_+|=\ell+1$.  Consider the auxiliary set $$B':=(A_1 \times \{0\}) \cup (A_- \times \{-1\}) \cup (A_+ \times \{1\}) \subseteq \mathbb{Z}^2,$$
which is obtained from $A$ by sending each element $i+j\alpha$ (with $j \in \{-1,0,1\}$ and $i \in \mathbb{Z}$) to $(i,j)$.  Observe that for each element $x \in \FS(A)$, one can uniquely determine the difference between the number of summands from $-\alpha+A_-$ and the number of summands from $\alpha+A_+$: There is a unique integer $-|A_-| \leq u \leq |A_+|$ such that $x \equiv u\alpha \pmod{1}$, and then the number of summands from $\alpha+A_+$ minus the number of summands from $-\alpha+A_-$ is exactly $u$.  It follows that two subset sums of $A$ coincide if and only if the corresponding subset sums in $B'$ coincide, so $|\FS(A)|=|\FS(B')|$.

Recall that replacing an element of $B'$ with its negative merely translates $\FS(B')$ and in particular leaves $|\FS(B')|$ unchanged.  With this in mind, let us flip the signs of $A_- \times \{-1\}$ to obtain the set
$$B:=(A_1 \times \{0\}) \cup ((A_+ \cup -A_-) \times \{1\}),$$
which satisfies $|\FS(B')|=|\FS(B)|$.  Notice that $|A_+ \cup -A_-|=|A_+|+|A_-|=\ell+1$, since if there were some $t \in A_+ \cap (-A_-)$, then $A$ would contain the elements $t+\alpha$ and $-t-\alpha$, one of which would have to be negative.  \cref{prop:calculation-for-tightness} again provides the desired contradiction. 
\end{proof}

\section{Further directions}\label{sec:open}

We conclude by discussing some further (mostly open) questions raised by our work.

\subsection{Extensions of \cref{thm:linear}}

Freiman's $3k-4$ Theorem says that if $A \subseteq \mathbb{R}$ is an $n$-element set with $|A+A| \leq 3n-4$, then $A$ is ``close'' to being an arithmetic progression, in the sense that it is contained in an arithmetic progression of length $|A+A|-n+1$.  The $3k-3$ Theorem provides a looser structural constraint on $n$-element sets $A$ with $|A+A| \leq 3n-3$ (roughly speaking, either $A$ is contained in a short arithmetic progression, or it is the union of two arithmetic progressions with the same common difference).  From here, the situation becomes more complicated but is still somewhat understood as long as $|A+A|$ is at most around $4|A|$; see \cite{EGM,JM} and the references therein.

Our \cref{thm:linear} provides a precise characterization of the $n$-element sets $A \subseteq \mathbb{R}_{>0}$ with $|\FS(A)| \leq \binom{n+1}{2}+1+(n-4)$, and we showed that our characterization fails if one replaces $n-4$ with $n-3$.  In contrast with the sumset setting described in the previous paragraph, however, here we are optimistic that for any fixed constant $C$, one should (with sufficient case analysis) be able to precisely characterize the $n$-element sets $A \subseteq \mathbb{R}_{>0}$ with $|\FS(A)| \leq \binom{n+1}{2}+1+(n+C)$.  We expect that such sets will still be ``close'' to homogeneous arithmetic progressions, but the allowed lengths of these arithmetic progressions will depend on the particular structure of the smallest $\Theta_C(1)$ elements of $A$.  The reason for our optimism is that the repeated addition of the subset sum operation introduces ``smoothing'' that tends to fill in gaps away from the very smallest and very largest subset sum values; the study of numerical semigroups may be relevant (see, e.g., \cite{mel,gs}).

If one allows $C$ to grow sufficiently quickly with $n$, then of course one can no longer expect $A$ to resemble a homogeneous arithmetic progression.  It would be interesting to determine, in some precise sense, the threshold at which this change of behavior occurs.

We also mention that one can pose the same questions for sets $A$ that are allowed to contain negative numbers as well as positive numbers (see \cite[Theorems 4 and 6]{N95}); the methods of \cref{thm:linear} should yield analogous results in this setting.

\subsection{Littlewood--Offord theory and extensions of \cref{thm: main inverse thm}}

Let us expand on our earlier discussion of Littlewood--Offord theory.  Recall that this area concerns the anticoncentration properties of the random variable $S:=\sum_{x \in B}x$, where $B$ is a uniformly random subset of a fixed $n$-element multiset $A \subseteq \mathbb{R}_{>0}$.  Early results of Littlewood and Offord \cite{LO}, Erd\H{o}s \cite{Erdos45}, Erd\H{o}s and Moser \cite{EM}, and S\'ark\"ozy and Szemer\'edi \cite{SS} showed that $\max_{y \in \RR} \mathbb{P}[S=y] \ll n^{-1/2}$ for all $n$-element multisets $A$, and that $\max_{y \in \RR} \mathbb{P}[S=y] \ll n^{-3/2}$ for all $n$-element sets $A$.  Tao and Vu~\cite{TV09} later showed that if $\max_{y \in \RR}\mathbb{P}[S=y] \geq n^{-O(1)}$, then almost all of $A$ is contained in a GAP of rank $O(1)$ and size $n^{O(1)}$.

In the setting of \cref{thm: main inverse thm}, the assumption $|\FS(A)| \leq Cn^2$ implies that $\max_{y \in \RR}\mathbb{P}[S=y] \geq C^{-1}n^{-2}$, and then the result of Tao and Vu tells us that most of $A$ is contained in a GAP of rank $O_C(1)$ and size $n^{O_C(1)}$.  Such a conclusion is unsuitable for our purposes, however, because the slackness in the size condition (compare the output of \cref{subsec:small-doubling}) would prevent us from running our later rank-reduction arguments.

Consider the more general problem of studying the structure of $n$-element sets $A \subseteq \mathbb{R}_{>0}$ with $|\FS(A)| \leq Cn^\kappa$ for fixed $\kappa>2$ and $C>0$.  As mentioned in the introduction, any description of such sets $A$ would have to be more complicated than what appears in \cref{thm: main inverse thm}.  One might hope to show that most of $A$ is contained in a small symmetric GAP of rank at most $\lfloor \kappa \rfloor-1$.  In this direction, the inverse Littlewood--Offord results of Tao and Vu identify a large subset of $A$ that is contained in a GAP of rank $O_{\kappa,C}(1)$ and size $n^{O_{\kappa,C}(1)}$, but we do not see any way to guarantee that the rank is in fact at most $\lfloor \kappa \rfloor-1$.  The most appealing instances of this open problem are $\kappa=2.01$ and $\kappa=3$.

\subsection{More on hyperplane constraints}

Recall the quantity 
$$f_d(n,m)=\min\{|\FS(A)|:A\in\Xi_d(n,m)\}$$ 
from \cref{sec:Rd}, where $\Xi_d(n,m)$ denotes the collection of $n$-element subsets $A \subseteq \mathbb R^d$ such that each proper subspace contains at most $m$ elements of $A$. We saw in \cref{thm:stability} that $f_d(n,m) \gg_{d,\varepsilon} n^{d+1}$ whenever $m \leq (1-\varepsilon) n$.  We now discuss the problem of finding matching upper-bound constructions, for various ranges of $m$.

The no-$k$-in-a-line problem provides one source of upper-bound constructions, due to the observation that if $B \subseteq \mathbb{R}^{d-1}$ is an $n$-element set such that each proper affine subspace contains at most $K$ elements of $B$, then $A:=B \times \{1\} \subseteq \mathbb{R}^d$ belongs to $\Xi_d(n,K)$.  If such a set $B$ is contained in $[0,N]^{d-1}$, then $\FS(A) \subseteq [0,nN]^{d-1} \times [0,n]$ has size $|\FS(A)| \ll_d n^d N^{d-1}$. 
Recent work of Grebennikov and Kwan~\cite{gk} establishes the existence of such sets $B$ for a wide range of parameters $n,K,N$: Their Theorem 1.3 shows that for any positive integer $d$ and any real $\delta>0$, if $N,K$ are positive integers satisfying $K^*\leq K \leq N^{d-2}$ (for a suitable constant $K^*=K^*(d,\delta)>0$), then there is a set $B \subseteq [1,N]^{d-1}$ of size $|B| \geq (1-\delta)KN$ such that each proper affine subspace contains at most $K$ elements of $B$.   In our context, applying this result with $\delta=1/2$ (for instance) and setting $n:=\lfloor KN/2 \rfloor$ gives
\begin{equation}\label{eq:affine-construction}
f_d(n,K) \ll_d n^{d} (n/K)^{d-1}=n^{2d-1} K^{1-d}
\end{equation}
whenever $K \leq (2n)^{(d-2)/(d-1)}$ is sufficiently large relative to $d$.  In particular, taking $K$ as large as possible leads to the upper bound 
$$f_d(n,\lfloor (2n)^{(d-2)/(d-1)} \rfloor) \ll_d n^{2d-1} \cdot n^{2-d}=n^{d+1}.$$
It follows that
$$f_d(n,m) \asymp_{d,\varepsilon} n^{d+1}$$
in the entire range $\lfloor (2n)^{(d-2)/(d-1)} \rfloor \leq m \leq (1-\varepsilon)n$ (the constant $2$ can be improved to $1+o(1)$ by sending $\delta \to 0$ slowly).

The bound \eqref{eq:affine-construction} becomes gradually worse as $K$ decreases but still gives $f_d(n,K) \ll_d n^{2d-1}$ whenever $K \geq K^*$.  Recent work of Ghosal, Goenka, and Keevash~\cite{ggk} demonstrates the existence of sets $B \subseteq [1,N]^{d-1}$ of size $|B| \gg_d N$ such that each proper affine subspace contains at most $d$ elements of $B$, so we in fact obtain $f_d(n,K) \ll_d n^{2d-1}$ whenever $K \geq d$.  For the interesting remaining case $K=d-1$, Ghosal, Goenka, and Keevash prove the existence of sets $B \subseteq [1,N]^{d-1}$ of size $|B| \gg_d N/(\log N)^{1/(d-1)}$ such that each proper affine subspace contains at most $d-1$ elements of $B$, whence $f_d(n,d-1) \ll_d n^{2d-1}(\log n)^{d/(d-1)}$. It is known (see, e.g., the discussion in \cite{gk}) that the logarithmic factor can be removed for $d=3$, which corresponds to the famous no-three-in-a-line problem.  This and \cref{thm:stability} together give $n^4 \ll f_3(n,2)\ll n^5$; in the absence of better constructions, we suspect that the upper bound is the truth.

\begin{conjecture}
We have $f_3(n,2)=\Theta(n^{5})$.
\end{conjecture} 

At the opposite extreme, Brickell and Saks~\cite[Theorem 1.4]{bs} have proven that $$f_d(n,n-1)=2^{d-1} f_1(n-d,n-d-1)$$
whenever $n \geq d+5$, and Nathanson~\cite[Theorem 4]{mel} has computed that $$f_1(k,k-1)=\lfloor (k+1)^2/4 \rfloor+1$$ (the extremal construction is an interval of integers centered at either $0$ or $1/2$, depending on the parity of $k$).

\subsection{Continuous analogues} 

In this subsection we describe a natural measurable analogue of \cref{thm:stability}.  Recall that this theorem says that if a finite set $A \subseteq \mathbb{R}^d$ is ``robustly'' $d$-dimensional, then $|\FS(A)| \gg |A|^{d+1}$.  One can ask about the optimal value of the implicit constant in the case where $A$ is a discretization (or, more precisely, sequence of increasingly fine discretizations) of a positive-measure subset of $\mathbb{R}^d$, which leads to the following problem. 

Fix a positive integer $d$, and let $\mu$ denote the $d$-dimensional Lebesgue measure.  Suppose that $A \subseteq \mathbb{R}^d$ is a Lebesgue-measurable set of finite positive measure such that $\int_A \|x\|_{\ell^1} \,d\mu(x)<\infty$ (the choice of the $\ell^1$-norm is not important since all norms on $\mathbb{R}^d$ are equivalent).  The measurable analogue of the set of subset sums of $A$ is
$$\SI(A):=\left\{ \int_B x \,d\mu(x): ~B \subseteq A \text{ measurable} \right\};$$
The integrability condition on $A$ ensures that these ``subset integrals'' $\int_B x \,dx$ all exist, and that the entire set $\SI(A)$ is bounded.  In fact, $\SI(A)$ is none other than a translate, by half of the first moment of $A$, of the well-studied \emph{centroid body} of $A$ from convex geometry: We have
$$\SI(A)=Z(A)+\frac{1}{2}\int_A x \,d\mu(x),$$
where $Z(A)$ denotes a dilation of the centroid body of $A$.  Lyapunov's Convexity Theorem (see \cite[Chapter~IX]{DU} or \cite{Art}) guarantees that $Z(A)$ is convex and, in particular, measurable.

Following the theme of this paper, we wish to bound $\mu(\SI(A))$ from below in terms of $\mu(A)$. For any $A \subseteq \mathbb{R}^d$ and $\lambda > 0$, we have $\SI(\lambda \cdot A) = \lambda^{d+1} \SI(A)$, so it suffices to consider the case $\mu(A) = 1$. Thus, we wish to characterize the sets $A \subseteq \mathbb{R}^d$ of measure $1$ that minimize $\mu(\SI(A))=\mu(Z(A))$, i.e., have the ``fewest possible'' distinct subset integrals.  The classical Busemann--Petty Inequality \cite{Petty} (see also \cite{Gardner}) says that if we further restrict $A$ to be convex, then the minimum is achieved if and only if $A$ differs by a set of measure $0$ from an ellipsoid (of measure $1$) centered at the origin.  It is routine to remove the convexity assumption.\footnote{One first deduces the same conclusion for the family of star-convex sets centered at the origin, and then allows arbitrary sets satisfying the above integrability condition.  Each reduction uses a ``redistribution of mass'' argument to constrain the structure of potential extremizers.} After a brief calculation, we arrive at the following clean statement.

\begin{proposition}
Let $d$ be a positive integer.  Let $A \subseteq \mathbb{R}^d$ be a set satisfying $\int_A \|x\|_{\ell^1} \,d\mu(x)<\infty$ and $0<\mu(A)<\infty$.  Let $\Gamma$ denote the gamma function.  Then 
$$\mu(\SI(A))\geq\left(\frac{\Gamma\!\left(\frac{d}{2}+1\right)}{(d+1)\sqrt{\pi}\,\Gamma\!\left(\frac{d+1}{2}\right)}\right)^d \cdot \mu(A)^{d+1},$$
with equality if and only if $A$ differs by a set of measure $0$ from an ellipsoid centered at the origin.
\end{proposition}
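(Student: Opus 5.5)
The plan is to pass from $\SI(A)$ to the centroid body and then apply the Busemann--Petty centroid inequality after a symmetrization step. Since $\SI(A)=Z(A)+\frac12\int_A x\,d\mu(x)$ and translation preserves measure, it suffices to bound $\mu(Z(A))$.

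By Lyapunov's theorem, $Z(A)$ is the convex body with support function
$$h_{Z(A)}(u)=\tfrac12\int_A |\langle x,u\rangle|\,d\mu(x),\qquad u\in\mathbb{R}^d.$$
To see this, maximize $\langle \int_B x,u\rangle$ over $B$ by taking $B=\{\langle x,u\rangle>0\}$, and subtract half the first moment. So $Z(A)$ is $\tfrac12 c_d$ times the classical centroid body $\Gamma A$, in whatever normalization one fixes. The homogeneity $\mu(Z(\lambda A))=\lambda^{d(d+1)}\mu(Z(A))$ is then clear. The inequality to prove is therefore a normalized Busemann--Petty inequality, and the target constant is simply the value of $\mu(Z(A))$ when $A$ is the Euclidean ball of measure $1$.

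The first step is to reduce to star sets centered at the origin, using the redistribution-of-mass argument mentioned in the footnote. Write $A$ in polar coordinates. For each direction $\theta\in S^{d-1}$, let $\rho_A(\theta)$ be defined by
$$\tfrac1d\rho_A(\theta)^d=\int_0^\infty \mathds{1}_A(t\theta)\,t^{d-1}\,dt,$$
and let $A^\star$ be the star set with radial function $\rho_A$. Then $\mu(A^\star)=\mu(A)$. For every $u$,
$$h_{Z(A)}(u)=\tfrac12\int_{S^{d-1}}|\langle\theta,u\rangle|\int_0^\infty \mathds{1}_A(t\theta)\,t^{d}\,dt\,d\theta .$$
Among sets on the ray with fixed $t^{d-1}\,dt$-mass, the initial segment $[0,\rho_A(\theta)]$ uniquely minimizes $\int t^d$ (bathtub principle). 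Hence $h_{Z(A^\star)}\le h_{Z(A)}$ pointwise, so $Z(A^\star)\subseteq Z(A)$ and $\mu(Z(A^\star))\le\mu(Z(A))$. The integrability hypothesis guarantees that everything is finite.

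For equality I would argue as follows. If $A$ differs from $A^\star$ in positive measure, then $h_{Z(A^\star)}<h_{Z(A)}$ on an open set of directions $u$. A strict decrease of the support function on an open set strictly decreases the volume of a convex body containing the origin in its interior, which is the case here since $A$ has positive measure. So extremizers are star sets up to null sets.

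The second step applies the Busemann--Petty centroid inequality in Lutwak's form for star bodies (and, by approximation, bounded-moment star sets): $\mu(\Gamma K)/\mu(K)^{d+1}$ is minimized exactly by origin-centered ellipsoids. The third step computes the constant for the ball $B$ of radius $R$. Here $\int_B|x_1|\,dx=\frac{2\omega_{d-1}}{d+1}R^{d+1}$, where $\omega_k$ denotes the volume of the unit $k$-ball. So $Z(B)$ is a ball of radius $\frac{\omega_{d-1}}{d+1}R^{d+1}$. Taking $\omega_dR^d=1$ gives
$$\mu(Z(B))=\Bigl(\frac{\omega_{d-1}}{(d+1)\omega_d}\Bigr)^d,\qquad \frac{\omega_{d-1}}{\omega_d}=\frac{\Gamma(\frac d2+1)}{\sqrt\pi\,\Gamma(\frac{d+1}{2})},$$
which is the stated constant. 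Scaling by $\mu(A)^{d+1}$ then finishes the inequality. The case $d=1$ is a direct check.

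The main obstacle I expect is the equality case in the passage beyond convex bodies. One needs Busemann--Petty with its equality characterization for star sets that may be unbounded, with only a finite first moment. I would handle this by truncating $\rho_A$ and using continuity of $h$ in $L^1$ of the moment. For the equality statement itself, I would rely on Lutwak's star-body version, which characterizes equality among star bodies directly. This avoids having to show equality survives a limit, which approximation alone cannot do.
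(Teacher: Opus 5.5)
Your radial bathtub reduction is sound. In fact, if $A\neq A^\star$ up to a null set, then $h_{Z(A^\star)}(u)<h_{Z(A)}(u)$ for every $u\neq 0$, since $|\langle\theta,u\rangle|$ vanishes only on a null great sphere. Your constant computation is also correct. The paper's sketch cites Busemann--Petty for convex bodies and removes convexity by two ``redistribution of mass'' reductions (arbitrary sets to origin-star-shaped sets, then star-shaped sets to convex bodies). Your radial step is the first reduction. You replace the second by citing Lutwak's star-body theorem, and that substitution breaks the equality case.

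The set $A^\star$ is typically not a star body. Its radial function $\rho_A$ is only measurable, and may be unbounded, discontinuous, or zero on a positive-measure set of directions (e.g.\ $A$ a bounded cone with apex at the origin). Lutwak's theorem characterizes equality only among star bodies, so it says nothing directly about an extremizer $A^\star$ of this kind. Using it would require approximation, and, as you note yourself, equality does not survive limits. So the claim that citing Lutwak ``avoids having to show equality survives a limit'' is wrong. As written, your argument does not exclude a non-ellipsoidal extremizer with irregular radial function.

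The remedy is to use the mechanism of the proof rather than its statement, since it works verbatim for measurable sets. Write $S_L$ for the surface area measure of a convex body $L$, $\Pi L$ for its projection body, and $V_1(L,M)=\frac1d\int_{S^{d-1}}h_M\,dS_L$. Fubini gives, for every measurable $K$ of finite measure and first moment,
\[
V_1(L,Z(K))=\frac1d\int_{S^{d-1}}h_{Z(K)}\,dS_L=\frac1d\int_K h_{\Pi L}(x)\,d\mu(x).
\]
Take $L:=Z(A)$, which is full-dimensional since $\mu(A)>0$. Let $K'$ be the dilate of the origin-symmetric convex body $\Pi^*L:=\{x:h_{\Pi L}(x)\le 1\}$ with $\mu(K')=\mu(A)$.

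The bathtub principle gives $\int_{K'}h_{\Pi L}\le\int_A h_{\Pi L}$, with equality iff $A=K'$ up to a null set. Hence $V_1(L,Z(K'))\le V_1(L,L)=\mu(L)$. Minkowski's first inequality $V_1(L,Z(K'))^d\ge\mu(L)^{d-1}\mu(Z(K'))$ then gives $\mu(Z(K'))\le\mu(Z(A))$, strictly unless $A=K'$ a.e.

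Applying the convex-body Busemann--Petty inequality, with its equality case, to $K'$ now yields both the bound and the characterization for arbitrary $A$. This even makes your radial step unnecessary.

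Alternatively, you can run Lutwak's proof directly on $A^\star$. There the dual mixed volume inequality is just H\"older's inequality on $S^{d-1}$, whose equality case holds almost everywhere for measurable radial functions. Petty's projection inequality is applied only to the genuine convex body $Z(A^\star)$.
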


We mention that there has been some work on stability results for the Busemann--Petty Inequality \cite{Ivaki} and other similar isoperimetric inequalities \cite{Ivaki2, Nguyen}; it might be interesting to investigate discrete or additive-combinatorial consequences.

\section*{Acknowledgments}
Colin Defant was supported by a Benjamin Peirce Fellowship at Harvard University.  Noah Kravitz was supported in part by a NSF Mathematical Sciences Postdoctoral Research Fellowship under grant DMS-2501336.  We thank Ben Green for helpful conversations.


\begin{thebibliography}{99999999999}

\bibitem[Art90]{Art}
Z. Artstein, Yet another proof of the Lyapunov convexity theorem. \emph{Proc. Amer. Math. Soc.}, {\bf 108} (1990), 89--91. 

\bibitem[BS93]{bs} E. Brickell and M. Saks, The number of distinct subset sums of a finite set of vectors.  \emph{JCTA}, {\bf 63.2} (1993), 234--256.

\bibitem[CDK26]{CDK26} R. Carpenter, C. Defant, and N. Kravitz, On the number of permutation-twisted dot products.  Preprint \href{https://arxiv.org/abs/2601.15276}{arXiv:2601.15276v1} (2026).

\bibitem[DU77]{DU}
J. Diestel and J. J. Uhl Jr., Vector Measures, AMS Mathematical Surveys, vol. 15, (1977). 

\bibitem[EGM14]{EGM} S. Eberhard, B. Green, and F. Manners, Sets of integers with no large sum-free subset.  \emph{Ann. of Math.}, {\bf 180.2} (2014), 621--652. 

\bibitem[Erd45]{Erdos45}
P. Erd\H{o}s, On a lemma of Littlewood and Offord. \emph{Bull. Amer. Math. Soc.}, {\bf 51} (1945), 898--902. 

\bibitem[EM47]{EM} P. Erd\H{o}s and L. Moser, Elementary Problems and Solutions: Solutions: E736. \emph{Amer. Math. Monthly}, {\bf 54.4} (1947), 229--230. 


\bibitem[GHHLLP20]{GHHLLP} W. Gao, M. Huang, W. Hui, Y. Li, C. Liu, and J. Peng, Sums of sets of abelian group elements. \emph{J. Number Theory}, {\bf 208} (2020), 208--229.

\bibitem[Gar13]{Gardner} 
R. J. Gardner, Geometric Tomography, 2nd ed., Cambridge Univ. Press, 2013. 

\bibitem[GGK25]{ggk} A. Ghosal, R. Goenka, and P. Keevash, On subsets of lattice cubes avoiding affine and spherical degeneracies. Preprint \href{https://arxiv.org/abs/2509.06935}{arXiv:2509.06935v1} (2025).

\bibitem[GS23]{gs} A. Granville and G. Shakan, Effective results on the size and structure of sumsets.  \emph{Combinatorica}, {\bf 43.6} (2023), 1139--1178.

\bibitem[GK25]{gk} A. Grebennikov and M. Kwan, No-$(k+ 1)$-in-line problem for large constant $k$.  Preprint \href{https://arxiv.org/abs/2510.17743}{arXiv:2510.17743v1} (2025). 

\bibitem[vHK23]{vHK} P. van Hintum and P. Keevash, Sharp bounds for the Tao-Vu Discrete John's Theorem.  Preprint \href{https://arxiv.org/abs/2309.12386}{arXiv:2309.12386v1} (2023).

\bibitem[Iva14]{Ivaki2}
M. N. Ivaki, On the stability of the p-affine isoperimetric inequality. \emph{J. Geom. Anal.}, {\bf 24} (2014), 1898--1911. 

\bibitem[Iva16]{Ivaki}
M. N. Ivaki, The planar Busemann--Petty centroid inequality and its stability. \emph{Trans. Amer. Math. Soc.}, {\bf 368} (2016). 

\bibitem[JM26]{JM} Y. Jing and A. Mudgal, A structure theorem for sets with doubling $4+\delta$.  Preprint \href{https://arxiv.org/abs/2604.25893}{arXiv:2604.25893v1} (2026).

\bibitem[Lev24]{lev2} V. Lev, A nonlinear bound for the number of subsequence sums. \emph{European J. Combin.}, {\bf 118} (2024), 103907.

\bibitem[Lev99]{lev} V. Lev, The structure of multisets with a small number of subset sums. \emph{Ast\'erisque}, {\bf 258} (1999), 179--186.

\bibitem[LO43]{LO}
J. E. Littlewood and A. C. Offord, On the number of real roots of a random algebraic equation. III. \emph{Rec. Math. [Mat. Sbornik] N.S.} {\bf 12}, (1943). 277--286. 

\bibitem[Nat95]{N95} M. Nathanson,  Inverse theorems for subset sums.  \emph{Trans. Amer. Math. Soc.} {\bf 347.4} (1995), 1409--1418.

\bibitem[Nat72]{mel} M. Nathanson, Sums of finite sets of integers.  \emph{Amer. Math. Monthly}, {\bf 79.9} (1972), 1010--1012. 

\bibitem[Ngu16]{Nguyen}
V. H. Nguyen, New approach to the affine P\'olya--Szeg\H{o} principle and the stability version of the affine Sobolev inequality. \emph{Adv. Math.}, {\bf 302} (2016), 1080--1110. 

\bibitem[Pet61]{Petty}
C. M. Petty, Centroid surfaces. \emph{Pacific J. Math.}, {\bf 11} (1961), 1535--1547. 

\bibitem[SS65]{SS}  A. S\'ark\"ozy and E. Szemer\'edi, \"Uber ein Problem von Erd\H{o}s und Moser.  \emph{Acta Arithmetica}, {\bf 11.2}
(1965) 205--208. 

\bibitem[TV06]{TV} T. Tao and V. Vu, \emph{Additive Combinatorics}.  Cambridge Studies in Advanced Mathematics, vol. 105, Cambridge University Press (2006).

\bibitem[TV08]{TV08} T. Tao and V. Vu, John-type theorems for generalized arithmetic progressions and iterated sumsets. \emph{Adv. Math.} {\bf 219} (2008), no. 2, 428--449.

\bibitem[TV09]{TV09} T. Tao and V. Vu, Inverse Littlewood-Offord theorems and the condition number of random discrete matrices.  \emph{Ann. of Math.}, {\bf 169} (2009), 595--632.

\bibitem[Ung82]{U82} P. Ungar,  $2N$ non-collinear points determine at least $2N$ directions.  \emph{J. Combin. Theory Ser. A} {\bf 33} (1982), 343--347. 

\end{thebibliography}
\end{document}